\newtheorem{thm}{Theorem}[section]
\newtheorem{exm}{Example}[section]
\newtheorem{lemma}{Lemma}[section]
\newtheorem{remark}{Remark}[section]
\definecolor{tabclr}{cmyk}{0,0,1,0}
\begin{document}

\title{Double-activation neural network for solving parabolic equations with time delay}
\author[bjut]{Qiumei Huang}
\ead{qmhuang@bjut.edu.cn}
\author[bjut]{Qiao Zhu\corref{cor}}
\ead{qiaozhubjut@163.com}
\cortext[cor]{Corresponding author}
\address[bjut]{School of Mathematics, Statistics and Mechanics, Beijing University of Technology, Beijing 100124, China}

\begin{abstract} This paper presents the double-activation neural network (DANN), a novel network architecture designed for solving parabolic equations with time delay.   In DANN, each neuron is equipped with two activation functions to augment the network's nonlinear expressive capacity.  Additionally, a new parameter is introduced for the construction of the quadratic terms in one of two activation functions, which further  enhances the network's ability to capture complex nonlinear relationships. 
To address the issue of low fitting accuracy caused by the discontinuity of solution’s derivative, a piecewise fitting approach is proposed by dividing the global solving domain into several subdomains. The convergence of the loss function is proven. Numerical results are presented to demonstrate the superior accuracy and faster convergence of DANN compared to the traditional physics-informed neural network (PINN).
\end{abstract}
\begin{keyword}  
parabolic equations with time delay, physics-informed neural networks, double-activation neural network.
\MSC[2020] 65L03 \sep 65L05 \sep 65L99 \sep 68T05
\end{keyword}
 \maketitle
\bigskip
\section{Introduction}\label{sec1}
Delay differential equations (DDEs) and delay partial differential equations (DPDEs) are important mathematical models that describe various real-life phenomena such as biology, control and climate prediction.

In this paper, we consider the following semilinear parabolic DPDEs
	\begin{eqnarray}
    \begin{cases}
          \partial_{t}u(t,x)=\alpha \Delta u(t,x)+h(t,x,u(t-\tau(t,u(t,x)),x),u(t,x)), \quad (t,x) \in \Omega_T:=[0,T] \times \Omega, \\
        u(t,x)=g(t,x), \quad (t,x) \in \Omega_0:=[-\tau_0,0] \times \Omega,\\ 
        u(t,x)=b(t,x), \quad (t,x) \in \partial\Omega_T:=[0,T] \times \partial\Omega ,
     \end{cases}
\label{equation}
\end{eqnarray}
where $\Omega \subset \mathbb{R}^{d}$ is a bounded domain, $\alpha$ is a constant, and $\Delta$ is the Laplacian operator on $\Omega$. The delay function $\tau(t,u(t,x)) > 0$, and $ t-\tau(t,u(t,x))$ is monotonic with respect to $t$.  Additionally, $\tau_0$ is defined as $\tau_0 = -\min_{(t,x) \in \Omega_T} \{t- \tau(t,u(t,x))\}$. Disregarding spatial factors and  setting $\alpha=0$, ($\ref{equation}$) degenerates into DDEs.

The exact solutions of DDEs are not available in general, and therefore one has to rely on numerical methods to find approximate solutions. There are many numerical methods for DDEs including linear multistep methods \cite{hong1996numerical}, Runge--Kutta methods \cite{bellen2013numerical, zhang2019impulsive}, $\theta$-methods \cite{rihan2022analysis,xu2004h}, collocation methods \cite{brunner2004collocation, brunner2001geometric, engelborghs2001collocation},
spectral methods \cite{yang2013spectral, zayernouri2014spectral} and finite element methods \cite{brunner2010discontinuous,huang2019postprocessing, huang2011superconvergence, huang2013hp, huang2016continuous}. DPDEs not only take into account the temporal delay in the evolution of a system's state but also incorporate spatial variations, making them ideal for modeling practical problems. The numerical methods of DPDEs mainly include finite difference methods \cite{ansari2007parameter}, spectral methods \cite{li2011ldg, liu2015spectral} and finite element methods \cite{Dai2023ExponentialTD, xu2023discontinuous}.

For DDEs with state-dependent delay, the delay term depends on the unknown solution, which makes it difficult to obtain the discontinuous points in advance. The current common method involves initially obtaining an approximate solution to identify the discontinuous points. This is followed by a procedure to refine these points, which then serve as the basis for solving a more accurate approximation. Subsequently, the discontinuous points are further corrected based on this improved solution, with the entire process being iterated to enhance precision progressively. This method has a very high computational cost and storage requirements. Therefore, finding efficient numerical methods to solve DDEs with state-dependent delay is of great importance.

In recent years, as a very simple and efficient method to solve PDEs, deep learning methods has attracted more and more attention \cite{weinan2021dawning,han2018solving,karniadakis2021physics}. Physics-informed neural network (PINN) \cite{guo2022normalizing, raissi2019physics} is one of the popular deep learning methods for solving PDEs with deep neural networks. Compared to the traditional mesh-based methods, such as the finite difference
method and the finite element method, PINN could be a mesh-free approach by taking advantage of the automatic differentiation. PINN is widely used for solving a variety of forward and inverse PDEs problems \cite{guo2022monte, lu2021deepxde, tang2023pinns, wang2021understanding}.

Deep learning methods have been successfully applied to solve DDEs. Fang et al. \cite{fang2020neural} adopted a very simple feedforward neural network with a hidden layer for solving the first-order DDEs with constant delay. Khan et al. \cite{khan2020design} presented a novel computing paradigm by exploiting the strength of feedforward
neural networks with Levenberg-Marquardt method, and Bayesian regularization method
 based backpropagation for solving DDEs of pantograph type. Panghal et al. \cite{panghal2022neural} applied feedforward neural network to solve DDEs of pantograph type,
where the neural networks are trained using the extreme learning machine algorithm. Liu et al. \cite{liu2021legendre} proposed a novel Legendre neural network combined with the extreme learning machine algorithm to solve variable coefficients linear delay differential-algebraic equations with weak discontinuities.
However, deep learning methods have not yet been applied to DDEs with state-dependent delay and DPDEs. In this paper, PINN for DDEs with state-dependent delay and parabolic DPDEs are obtained. Moreover, considering discontinuous points may influence the approximate accuracy, we propose a piecewise fitting method for parabolic DPDEs with nonvanishing delay by dividing the whole domain into several subdomains according to the primary discontinuous points.

Some researchers have explored various network structures to improve the learning capabilities of PINNs. Yang et al. \cite{yang2020physics} proposed a physics-informed generative adversarial network (GAN) by combining GAN with PINN, which improved the modeling and solving capabilities for stochastic processes. Jagtap et al. \cite{jagtap2020adaptive} introduced a PINN with an adaptive activation function (APINN), by incorporating a scalable hyper-parameter through a scaling factor in the activation function to accelerate the training convergence of PINNs. Bu et al. \cite{bu2021quadratic} proposed quadratic residual networks (QRES) by adding a quadratic residual term to the weighted sum of inputs for solving forward and inverse PDE problems, and made comparisons with identity shortcut (ISC) and quadratic shortcut (QSC), which are two baseline architectures sharing similar ideas with ResNet \cite{he2016deep}. Inspired by these different neural architectures, we propose a novel neural network called the double-activation neural network (DANN), which enhances the network’s nonlinear expressive capacity by equipping each neuron with two activation functions and introducing an additional parameter to construct quadratic terms. We compare the experimental results of four numerical examples obtained by PINN, APINN, QRES, ISC, QSC, and DANN models, which show that DANN can achieve the highest fitting accuracy with a small number of training points.

This paper is organized as follows. Section 2 discusses deep learning algorithms for solving equation ($\ref{equation}$) and introduces two improved strategies for PINN, namely DANN and piecewise fitting. Section 3 describes the convergence result of the loss function. Section 4 presents numerical examples to demonstrate the effectiveness of these improvement strategies.

\section{Deep learning for semilinear parabolic DPDEs}
In this section, we present main algorithm of this paper. We first describe PINN for solving equation ($\ref{equation}$), including training data, loss function and optimization algorithms. Then, DANN and piecewise fitting are proposed.
\subsection{PINN for semilinear parabolic DPDEs}
PINN is a type of neural network that incorporates physical equations as constraints for solving partial differential equations. By adding the residual of the physical equations into the loss function of the neural network, PINN allows the physics to be integrated into the training process. First, we give the expression of the approximate solution $\hat{u}(t, x ; \theta)$ for ($\ref{equation}$). The approximate solution is defined as
\begin{eqnarray}
\hat{u}(t, x ; \theta)=b(t, x)+B(x)  \mathcal{N}(t, x ; \theta).
\label{approximatesolution}
\end{eqnarray}
Here, $\mathcal{N}(t, x ; \theta)$ represents a neural network with parameters $\theta$, and $B(x)$ ensures that the approximate solution accurately satisfies the boundary conditions.

For notational convenience, we  denote
$$\mathcal{G}[u](t,x)=\alpha \partial_{t}u(t,x)-\Delta u(t,x)-h(t,x,u(t-\tau(t,u(t,x)),x),u(t,x)).$$ In the PINN framework, the approximate solution $\hat{u}(t,x;{\theta})$ of ($\ref{equation}$) is obtained by minimizing the following loss function
\begin{eqnarray}
\begin{aligned}
   \mathrm{Loss}(\hat{u})&=\Vert \mathcal{G}[\hat{u}](t,x;{\theta})\Vert_{2,\Omega_T}^{2}
   +\Vert \hat{u}(t,x;{\theta})-g(t,x)\Vert_{2,\Omega_0}^{2},
\end{aligned}
\end{eqnarray}
where $\Vert \cdot \Vert_{2,\Omega_T}^{2}=\iint_{\Omega_T} | \cdot |^2dtdx$ and $\Vert \cdot \Vert_{2,\Omega_0}^{2}=\iint_{\Omega_0} | \cdot |^2dtdx$. The loss function $\mathrm{Loss}(\hat{u})$ measures how well the approximate solution satisfies the differential operator and initial condition. To discretize the loss function, Latin hypercube sampling is employed. The interior collocation points are denoted as $S_{\Omega_{T}}=\{{t_i},{x_i}\}_{i=1}^{N_f}$, and the initial training data points are denoted as $S_{\Omega_0}=\{{t_j},{x_j}\}_{j=1}^{N_{I}}$. $N_f$ and $N_I$ represent the number of interior collocation points and initial training data respectively. By combining these selected training points, the discretized loss function is obtained as follows
\begin{eqnarray}
\begin{aligned}
   \mathrm{Loss}_N(\hat{u})&=\Vert \mathcal{G}[\hat{u}](t,x;{\theta})\Vert_{N_f,S_{\Omega_{T}}}^{2}
   +\Vert \hat{u}(t,x;{\theta})-g(t,x)\Vert_{N_{I},S_{\Omega_0}}^{2},
\end{aligned}
\end{eqnarray}
where
\begin{align*}
&\|\mathcal{G}[\hat{u}](t, x ; \theta)\|_{N_f, S_{\Omega_T}}^2 = \frac{1}{N_f} \sum_{i=1}^{N_f}\big(\alpha \partial_t \hat{u}(t_i, x_i ; \theta)-\Delta \hat{u}(t_i, x_i ; \theta) \\
&\quad\quad\quad\quad\quad\quad\quad\quad\quad\quad-h(t_i, x_i, \hat{u}(t_i-\tau(t_i, \hat{u}(t_i, x_i ; \theta)), x_i ; \theta), \hat{u}(t_i, x_i ; \theta)) \big)^2, \\
&\|\hat{u}(t, x ; \theta)-g(t, x)\|_{N_I, S_{\Omega_0}}^2 = \frac{1}{N_I} \sum_{j=1}^{N_I}\left(\hat{u}\left(t_j, x_j ; \theta\right)-g\left(t_j, x_j\right)\right)^2 .
\end{align*}

To minimize the loss function and obtain the optimal parameters for the neural network, we employ the Limited-memory Broyden-Fletcher-Goldfarb-Shanno with Bounds (L-BFGS-B) optimization algorithm \cite{zhu1997algorithm}, which combines the BFGS method with line search techniques, making it a robust and efficient optimization algorithm for constrained optimization problems. Moreover, we adopt the Xavier initialization method \cite{glorot2010understanding} to initialize the weights of $\hat{u}$, which can better initialize the network and help accelerate convergence speed.
\subsection{Double-activation neual network (DANN)}	
Most existing work in PINN predominantly utilizes deep neural network (DNN) architectures.  Each neuron structure in DNN can be represented as shown in Figure $\ref{DNN}$. Mathematically, a DNN layer can be expressed as $y^{\text{DNN}} = \sigma(Wx+b)$, where $(W, b)$ are the learnable parameters and $\sigma$ represents a nonlinear activation function. It is important to note that the linear transformation $Wx+b$ only introduces linearity to the outputs, while the nonlinearity is introduced through the activation function $\sigma$. As a consequence, in order to capture an adequate level of nonlinearity with satisfactory accuracy, a significant number of DNN layers with reasonable widths are required.
\begin{figure}[h]
	\centering
		\includegraphics[width=0.5\linewidth]{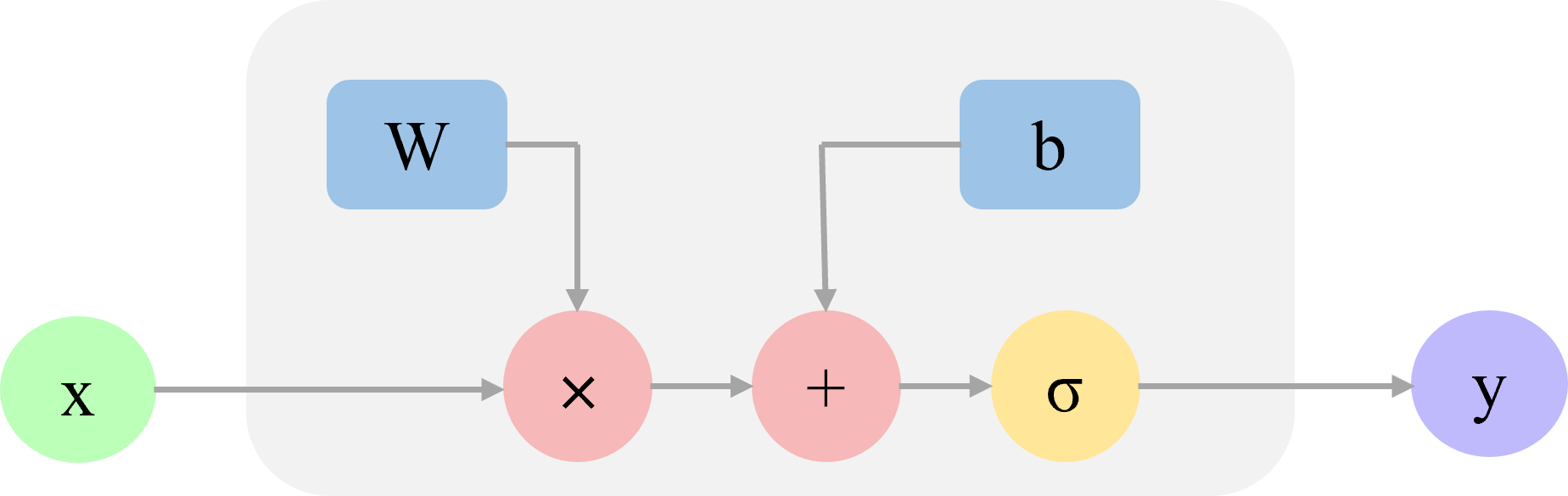}
		\caption{The neuron structure in DNN.}
 \label{DNN}
\end{figure}
\begin{figure}[h]	
    \centering
		\includegraphics[width=0.5\linewidth]{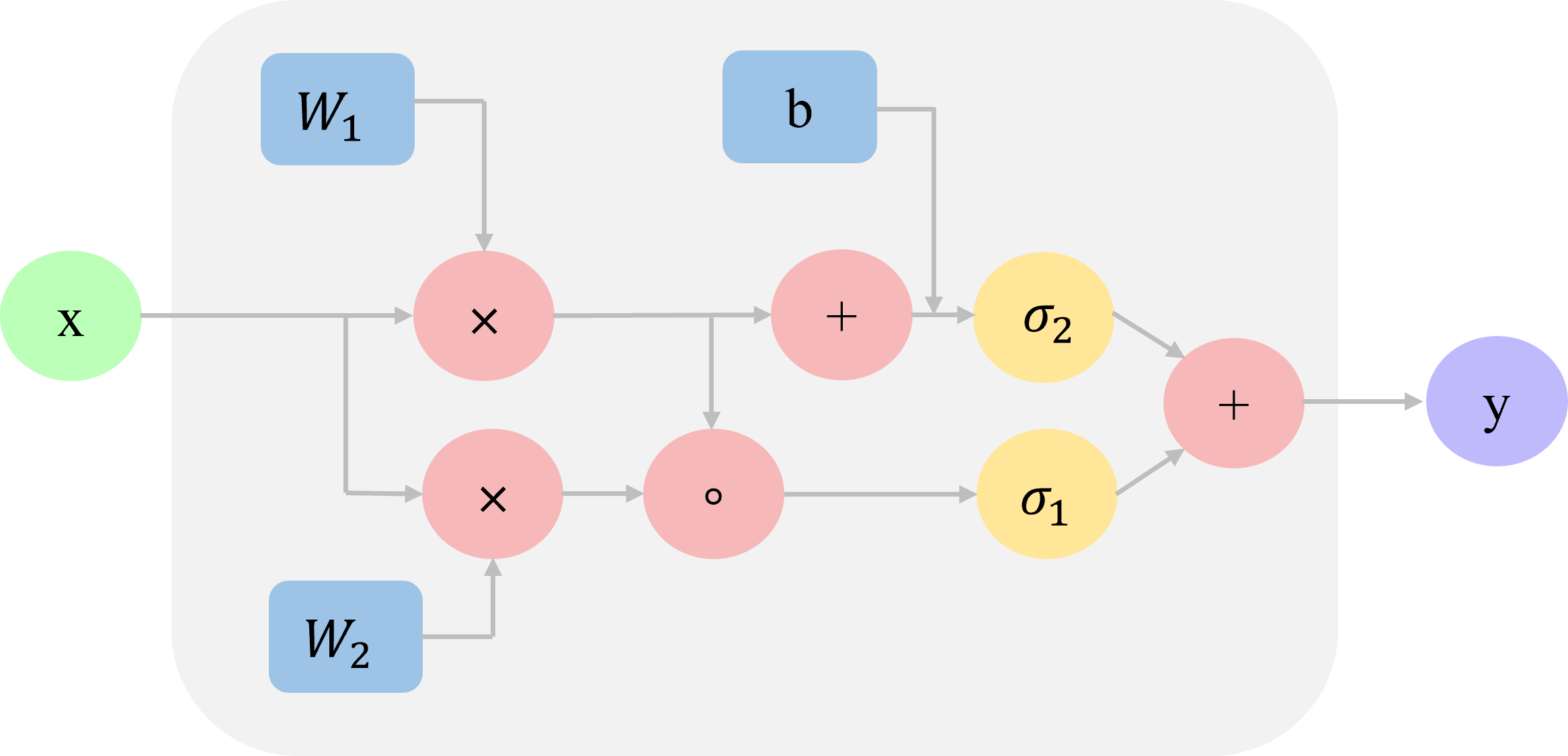}
		\caption{The neuron structure in DANN.}
 \label{DANN}
 \end{figure}

We introduce quadratic terms $W_2x \circ W_1x$ and add two activation functions into each neuron at every layer of DANN to contribute additional nonlinearity, where $\circ$ denotes the Hadamard product. Specifically, a single layer of DANN can be expressed as $y^{\text{DANN}} = \sigma_1(W_2x \circ W_1x) + \sigma_2(W_1x + b)$, where $\widetilde{{\theta}}=(W_1, W_2, b)$ are the learnable parameters. $\sigma_1$ and  $\sigma_2$ represent nonlinear activation functions, which can be the same or different. The term $W_2x \circ W_1x$ is added to the weighted sum $W_1x + b$ before passing through two activation functions. The neuron structure of DANN is illustrated in Figure $\ref{DANN}$. When $\sigma_1$ is linear, DANN degenerates to QSC. These modifications allow DANN to capture stronger nonlinearity and improve the network's ability to approximate solutions accurately compared to QRES and QSC. The experimental results presented later in the paper will provide empirical evidence regarding the advantages of DANN over the other models mentioned. The algorithm of DANN is summarized in Algorithm 1.

To provide a comprehensive comparison, we compared results obtained by several other popular models in Section 4, including the APINN, QRES, ISC, and QSC models. The network structures used in these models are summarized as follows

\noindent
\begin{minipage}[t]{0.5\textwidth}
\centering
\begin{itemize}
\small
    \item PINN: $y^{\text{PINN}} = \sigma(Wx+b)$.
    \item APINN: $y^{\text{APINN}} = \sigma(na(Wx+b))$.
    \item ISC: $y^{\text{ISC}} = \sigma(Wx +b)+x$.
\end{itemize}
\end{minipage}%
\begin{minipage}[t]{0.5\textwidth}
\centering
\begin{itemize}
\small
    \item QRES: $y^{\text{QRES}} = \sigma(W_2x \circ W_1x+ W_1x + b)$.   
    \item QSC: $y^{\text{QSC}} = W_2x \circ W_1x + \sigma(W_1x + b)$.
    \item DANN: $y^{\text{DANN}} = \sigma_1(W_2x \circ W_1x) + \sigma_2(W_1x + b)$.
\end{itemize}
\end{minipage}

\begin{table}[h]
\centering
\scalebox{0.98}{
\begin{tabular}{ p{15cm}} 
\hline
   \textbf{Algorithm 1:} DANN \\
   \hline
   \textbf{Step 1:} Generate training data using the Latin hypercube sampling method. We have
   $$
    S_{\Omega_T}=\{t_i, x_i\}_{i=1}^{N_f}, S_{\Omega_0}=\{t_j, x_j\}_{j=1}^{N_I}.   
   $$
   \textbf{Step 2:} Choose two activation functions $\sigma_1$ and  $\sigma_2$ for constructing a double-activation neural network $\mathcal{N}(t,x;\widetilde{{\theta}})$. Initialize the parameters $\widetilde{{\theta}}$ (including two sets of weights $\{W_1, W_2\}$ and biases $b$) using the Xavier method. Define the approximate solution $\hat{u}(t, x ; \widetilde{{\theta}})$.\\
    
   \textbf{Step 3:} Construct the residual neural network by substituting $\hat{u}(t, x ;\widetilde{{\theta}})$ into the governing equations using automatic differentiation.\\   
   
   \textbf{Step 4:} Define the loss function
   \begin{eqnarray*}
   \operatorname{Loss}_N(\hat{u})=\|\mathcal{G}[\hat{u}](t, x ; \widetilde{\theta})\|_{N_f, S_{\Omega_T}}^2+\|\hat{u}(t, x ; \widetilde{\theta})-g(t, x)\|_{N_I, S_{\Omega_0}}^2.
   \end{eqnarray*}
  
   \textbf{Step 5:} Minimize the loss function using the L-BFGS-B optimizer to find the optimal parameters $\boldsymbol{\widetilde{\theta}}^*$, where
   \begin{equation*}
    \boldsymbol{\widetilde{\theta}}^*=\arg\min_{\widetilde{\theta}} \operatorname{Loss}_N(\hat{u}).
   \end{equation*}
  \\
\hline
\end{tabular}}
\label{algorithm1}
\end{table}

\subsection{piecewise fitting}
When the neural network approximates the solution in the whole domain, we call it global fitting. For parabolic DPDEs with nonvanishing delay, one important characteristic is primary discontinuous points occuring in the solution domain, which will result in the decrease of convergence for the numerical method if the relevant partition doesn't take those discontinuous points as the mesh points. In order to improve the accuracy of PINN for solving parabolic DPDEs with nonvanishing delay $t-\tau(t)$, we propose a piecewise fitting method, which divides the whole domain into several subdomains by the discontinuity points. First of all, we divide interval $[0, T]$ into $\left[0, \xi_1\right],\left[\xi_1, \xi_2\right], \ldots,\left[\xi_{s-1}, \xi_s\right]$, where $\xi_0=0, \xi_{i+1}-\tau\left(\xi_{i+1}\right)=\xi_i, i=0, \ldots, s-1$, and $\xi_{s-1} \le T \le \xi_s$. The algorithm steps of piecewise fitting are as follows:

Step 1: We first approximate ($\ref{equation}$) at the first subinterval $\left[0, \xi_1\right]$. That is, we use neural network $\mathcal{N}^{(1)}\left(t, x ; \theta^{(1)}\right)$ to approximate $u(t, x)$ of the following parabolic PDEs
\begin{eqnarray}
    \begin{cases}
\partial_t u(t, x)=\alpha\Delta u(t, x)+h(t, x, g(t-\tau(t), x), u(t, x)),\quad (t, x) \in \Omega_{\xi_1}:= [0, \xi_1 ] \times \Omega, \\
u(0, x)=g(0, x),\quad x \in \Omega, \\
u(t, x)=b(t, x),\quad (t, x) \in \partial \Omega_{\xi_1}:= [0, \xi_1 ] \times \partial \Omega.
\end{cases}
\label{fenduan1equation}
\end{eqnarray}
Here, due to the fact that when $t-\tau(t) \leq \xi_1-\tau(\xi_1)=0, t \in[0, \xi_1]$, we have $u(t-\tau(t), x)=g(t-\tau(t), x)$ in \eqref{fenduan1equation} and \eqref{equation} degenerates into a classical parabolic PDEs without delay. Additionally, substitute $u(t-\tau(t), x)$ by $g(t-\tau(t), x)$ and the initial condition can be substitute by $u(0, x)=g(0, x)$.
Define the approximate solution of $u(t, x)$ in ($\ref{fenduan1equation}$) as $\hat{u}^{(1)}(t, x ; \theta^{(1)})$ 
$$
\hat{u}^{(1)}(t, x ; \theta^{(1)})=b(t, x)+B(x) \mathcal{N}^{(1)}(t, x ; \theta^{(1)}).
$$
The loss function is defined as
$$
\begin{aligned}
& \operatorname{Loss}^{(1)}(\hat{u}(t, x ; \theta^{(1)})) \\
&=\|\partial_t \hat{u}^{(1)}(t, x ; \theta^{(1)})-\alpha\Delta \hat{u}^{(1)}(t, x ; \theta^{(1)})-h(t, x, g(t-\tau(t), x), \hat{u}^{(1)}(t, x ; \theta^{(1)}))\|_{2, \Omega_{\xi_1}}^2 \\
& \quad +\|\hat{u}^{(1)}(0, x ; \theta^{(1)})-g(0, x)\|_{2, \Omega}^2,
\end{aligned}
$$
where $B(x)$ already guarantees that the approximate solution is accurate at the boundary condition. Similar with  global fitting, we first discretize the $\operatorname{Loss}^{(1)}(\hat{u}(t, x ; \theta^{(1)}))$ through the training data to obtain the empirical $\operatorname{loss} \operatorname{Loss}_N^{(1)}(\hat{u}(t, x ; \theta^{(1)}))$. Then, L-BFGS-B optimization algorithm is used to minimize the loss function to obtain the optimized parameters $\boldsymbol{\theta}_*^{(1)}$, where 

$$
\boldsymbol{\theta}_*^{(1)}=\underset{\theta}{\operatorname{argmin}} \operatorname{Loss}_N^{(1)}(\hat{u}(t, x ; \theta^{(1)})) \text {. }
$$

The approximate solution of equation ($\ref{equation}$) on $[0, \xi_1] \times \Omega$ can be expressed as
$
\hat{u}^{(1)}(t, x ; \boldsymbol{\theta}_*^{(1)}) \text {. }
$

Step 2: Suppose that the approximate solution $\hat{u}^{(i)}(t, x ; \boldsymbol{\theta}_*^{(i)})$ of equation ($\ref{equation}$) in subinterval $t \in[\xi_{i-1}, \xi_i], i=1,2, \ldots, k, k<s$ has been obtained. In order to obtain the approximate solution in subdomain $[\xi_k, \xi_{k+1}] \times \Omega$, take the approximate solution $\hat{u}^{(k)}(t, x ; \boldsymbol{\theta}_*^{(k)})$ obtained in the previous subinterval as the initial function on the $k^{\text {th }}$ subdomain, then the equation can be expressed as
$$
\left\{\begin{array}{l}
\partial_t u(t, x)=\alpha\Delta u(t, x)+h(t, x, \hat{u}^{(k)}(t-\tau(t), x ; \boldsymbol{\theta}_*^{(k)}), u(t, x)), \quad(t, x) \in[\xi_k, \xi_{k+1}] \times \Omega, \\
u(\xi_k, x)=\hat{u}^{(k)}(\xi_k, x ; \boldsymbol{\theta}_*^{(k)}), \quad x \in \Omega, \\
u(t, x)=b(t,x),\quad (t, x) \in \partial \Omega_{\xi_k}:=[\xi_k, \xi_{k+1}] \times \partial \Omega .
\end{array}\right.
$$
Similar with Step 1, we can obtain the approximate solution $\hat{u}^{(k+1)}(t, x ; \boldsymbol{\theta}_*^{(k+1)})$ of equation ($\ref{equation}$) on $[\xi_k, \xi_{k+1}] \times \Omega$.

Repeating the algorithm of Step 2 until the approximate solution $\hat{u}^{(s)}(t, x ; \boldsymbol{\theta}_*^{(s)})$ of the last subinterval $[\xi_{s-1}, \xi_s]$ is obtained.

\section{Convergence theorem of the neural network solution}
In this section, we give the main results to illustrate the approximation ability of the neural network method proposed in Section 2. In the convergence theory, to apply conviently the universal approximation theorem \cite{hornik1989multilayer, hornik1990universal} of neural networks, we denote the neural network $\mathfrak{S}^{r}$ to be a class of neural networks with a single hidden layer and $r$ hidden units.
\subsection{Main results}
Theorem 3.1 shows that under reasonable assumptions, the loss function $\mathrm{Loss}(\hat{u})$
tends to zero.
\begin{thm}\label{副定理}
Assume

(\romannumeral1) the solution of equation ($\ref{equation}$) is unique and bounded on domain $\Omega_T$,

(\romannumeral2) the initial function $g(t,x)$ of equation ($\ref{equation}$) is bounded on domain ${\Omega_0}$,

(\romannumeral3) the nonlinear terms $h(t,x,u,v)$ is locally Lipschitz continuous in $(u, v)$ with Lipschitz constant
that can have at most polynomial growth in $u$ and $v$, uniformly with respect to $t$, $x$. This means that
$$
\left|h(t, x, u, v)-h(t, x, w, s)\right| \le (\left|u \right|^{q_1/2}+\left|v \right|^{q_2/2}+\left|w \right|^{q_3/2}+\left|s \right|^{q_4/2})(\left|u-w \right|+\left|v-s \right|),
$$
for some constants $0  \le q_1, q_2, q_3, q_4 < \infty$. 

Then, for any $\epsilon	\ge 0$, there exists a positive constant $C\ge 0$ that depend on $\sup_{\Omega_T}\left| u(t,x) \right|$ and $\sup_{\Omega_T}\left| u(t-\tau(t,u(t,x)),x) \right|$ such that there exists a neural network $\hat{u}\in\mathfrak{S}^{r}$ that satisfies
		$$\mathrm{Loss}(\hat{u}) \leq C\epsilon.$$	
 	\end{thm}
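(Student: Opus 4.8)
The plan is to use the universal approximation theorem to construct a neural network $\hat u \in \mathfrak{S}^r$ that is $C^{1,2}$-close (close in value, in $\partial_t$, and in second spatial derivatives) to the true solution $u$ of \eqref{equation}, and then to propagate this closeness through the definition of $\mathrm{Loss}(\hat u)$ term by term. Specifically, by assumption (i) the solution $u$ is smooth enough that $u$, $\partial_t u$, and $\Delta u$ are continuous and bounded on $\overline{\Omega_T}$; applying the version of the universal approximation theorem for simultaneous approximation of a function and its derivatives \cite{hornik1990universal}, for any $\delta > 0$ there is $r$ and $\hat u \in \mathfrak{S}^r$ with
$$
\sup_{\Omega_T}\bigl(|u - \hat u| + |\partial_t u - \partial_t \hat u| + |\Delta u - \Delta \hat u|\bigr) \le \delta,
$$
and, since $\Omega_0$ is part of the same time-space slab and $g$ is the restriction of $u$ there, one can simultaneously arrange $\sup_{\Omega_0}|\hat u - g| \le \delta$. (If the architecture in \eqref{approximatesolution} with the boundary factor $B(x)$ is used, the same statement holds after absorbing $b$ and $B$ into constants, using that $B$ is bounded.)

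Next I would estimate each of the two summands in $\mathrm{Loss}(\hat u)$. The second term $\|\hat u - g\|_{2,\Omega_0}^2 \le |\Omega_0|\,\delta^2$ is immediate. For the first term, write
$$
\mathcal{G}[\hat u] = \alpha(\partial_t \hat u - \partial_t u) - (\Delta \hat u - \Delta u) + \mathcal{G}[u] - \bigl(h(\cdots \hat u \cdots) - h(\cdots u \cdots)\bigr),
$$
and note $\mathcal{G}[u] = 0$ since $u$ solves \eqref{equation} (up to the sign convention in the definition of $\mathcal{G}$). The first two differences are bounded by $(|\alpha|+1)\delta$ pointwise. The $h$-difference is controlled by assumption (iii): the arguments differ in the delayed slot, $\hat u(t-\tau(t,\hat u(t,x)),x)$ versus $u(t-\tau(t,u(t,x)),x)$, and in the undelayed slot, $\hat u(t,x)$ versus $u(t,x)$. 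Using the polynomial-growth Lipschitz bound with $w,s$ the true-solution arguments and $u,v$ the network arguments, and using that all four arguments stay within an $O(\delta)$ neighbourhood of the bounded set where $u$ and its delayed value live, the prefactor $(|u|^{q_1/2}+\cdots+|s|^{q_4/2})$ is bounded by a constant $C_0$ depending on $\sup_{\Omega_T}|u(t,x)|$ and $\sup_{\Omega_T}|u(t-\tau,x)|$; hence the $h$-difference is at most $C_0$ times the sum of the two argument discrepancies. Collecting, $|\mathcal{G}[\hat u]| \le C_1 \delta + C_0(\text{delayed discrepancy})$ pointwise, with $C_1$ absolute.

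The one genuinely delicate point — and the step I expect to be the main obstacle — is bounding the delayed discrepancy $\bigl|\hat u(t-\tau(t,\hat u(t,x)),x) - u(t-\tau(t,u(t,x)),x)\bigr|$, because the evaluation point of the delay itself depends on the network output through the state-dependent delay $\tau(t,\cdot)$. I would split it as
$$
\bigl|\hat u(t-\tau(t,\hat u),x) - u(t-\tau(t,\hat u),x)\bigr| + \bigl|u(t-\tau(t,\hat u),x) - u(t-\tau(t,u),x)\bigr|,
$$
bounding the first piece by $\sup_{\Omega_T}|\hat u - u| \le \delta$ (this requires the approximation bound to hold on the whole slab, including the shifted region, which it does since the universal approximation estimate is uniform on $\overline{\Omega_T}$ and on $\Omega_0$), and the second piece by assuming the Lipschitz/continuity regularity one expects of $u$ in $t$ together with Lipschitz continuity of $\tau$ in its second argument, so that it is bounded by $L_u L_\tau |\hat u(t,x) - u(t,x)| \le L_u L_\tau \delta$. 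This is where an implicit smoothness hypothesis on $u$ and $\tau$ is being used; it is consistent with the monotonicity condition on $t - \tau(t,u)$ stated after \eqref{equation}, which guarantees the shifted argument stays in $[-\tau_0,0]\cup[0,T]$. Granting this, the delayed discrepancy is $O(\delta)$, so $|\mathcal{G}[\hat u]| \le C_2\delta$ pointwise with $C_2$ depending only on $|\alpha|$, $C_0$, and the Lipschitz constants, hence $\|\mathcal{G}[\hat u]\|_{2,\Omega_T}^2 \le |\Omega_T| C_2^2 \delta^2$. Adding the two contributions gives $\mathrm{Loss}(\hat u) \le C' \delta^2$ for a constant $C'$ of the stated dependence; given $\epsilon \ge 0$, choosing $\delta$ with $C'\delta^2 \le C\epsilon$ (e.g. $\delta = \sqrt{\epsilon}$ and $C = C'$) and the corresponding $r$ completes the proof. $\hfill\square$
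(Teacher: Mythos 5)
Your proposal is correct and follows essentially the same route as the paper: apply the derivative-approximating universal approximation theorem to obtain a network $C^{1,2}$-close to $u$ on the whole slab, then bound the two loss terms using $\mathcal{G}[u]=0$ together with the polynomial-growth Lipschitz condition on $h$ (and the boundedness of $u$ and its delayed values to control the prefactor). The one place you go beyond the paper is the delayed-argument discrepancy $|\hat u(t-\tau(t,\hat u),x)-u(t-\tau(t,u),x)|$: the paper silently bounds this by $\epsilon$, whereas you correctly identify that it requires splitting off the shift in the evaluation point and invoking Lipschitz continuity of $u$ in $t$ and of $\tau$ in its state argument --- an implicit regularity hypothesis the paper uses without stating.
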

\subsection{Preliminaries}
The results stated in the following lemma plays a key role in the proof of Theorem 3.1. In the following theorem, $\mathbb{C}(U)$ denotes the set of continuous functions. $\mathbb{C}^{1,2}\left(\Omega_T\right)$ is the space of all functions that are continuously differentiable for $t$ of first order and continuously differentiable for $x$ of second order.

\begin{lemma}
    For any $a>0$ and $b>0$, the inequality $(a+b)^q \leq C(a^q+b^q)$ holds for $q \geq 0$, where $C$ is a constant.
\end{lemma}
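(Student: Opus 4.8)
The plan is to prove this elementary inequality by a short case analysis on the exponent $q$, using the convexity of $t\mapsto t^{q}$ when $q\ge 1$ and its subadditivity (a consequence of concavity together with vanishing at the origin) when $0\le q\le 1$. Throughout, the constant $C$ is permitted to depend on $q$, which is unavoidable: the sharp constant grows like $2^{q-1}$ as $q\to\infty$.

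First I would handle the range $0\le q\le 1$. Here $\varphi(t)=t^{q}$ is concave on $[0,\infty)$ with $\varphi(0)=0$, hence subadditive, so $(a+b)^{q}\le a^{q}+b^{q}$ and $C=1$ suffices (the case $q=0$ being trivial since $1\le 2$). Next, for $q>1$ the function $\varphi(t)=t^{q}$ is convex, so midpoint convexity (or Jensen's inequality) gives
$$
\Bigl(\tfrac{a+b}{2}\Bigr)^{q}\le \tfrac12\bigl(a^{q}+b^{q}\bigr),
$$
that is, $(a+b)^{q}\le 2^{q-1}(a^{q}+b^{q})$, so $C=2^{q-1}$ works. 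Taking $C=\max\{1,\,2^{q-1}\}$ then covers all $q\ge 0$ at once. If one prefers to avoid splitting into cases, a cruder one-line argument also works: by symmetry assume $a\ge b$; then $a+b\le 2a$ and $a^{q}\le a^{q}+b^{q}$, whence $(a+b)^{q}\le (2a)^{q}=2^{q}a^{q}\le 2^{q}(a^{q}+b^{q})$, giving the (non-sharp) constant $C=2^{q}$ valid for every $q\ge 0$.

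There is essentially no obstacle here; the only points requiring a modicum of care are that $C$ must be allowed to depend on $q$, and that although the statement is phrased for $a,b>0$, the inequality extends to $a,b\ge 0$ by continuity, which is the form actually used when invoking the lemma in the proof of Theorem~\ref{副定理} to absorb cross terms of the type $(|u|^{q_i/2}+\cdots)$ arising from assumption~(\romannumeral3).
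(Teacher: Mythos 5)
Your proof is correct, and in fact it contains two valid arguments. Your ``cruder one-line argument'' (assume $a\ge b$, so $a+b\le 2a$ and $(a+b)^q\le 2^q a^q\le 2^q(a^q+b^q)$) is essentially identical to the paper's own proof, which observes $a+b\le 2\max\{a,b\}$ and takes $C=2^q$. Your primary argument is a genuinely different route: splitting on $q$ and using subadditivity of the concave power for $0\le q\le 1$ (giving $C=1$) and convexity of $t\mapsto t^q$ for $q>1$ (giving the sharp constant $C=2^{q-1}$). What the convexity route buys is the optimal constant $\max\{1,2^{q-1}\}$; what the paper's one-liner buys is brevity and no case analysis. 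Since in the application (absorbing the polynomial-growth factors in the proof of Theorem~3.1) only the existence of some $q$-dependent constant matters, the two are interchangeable, and your observations that $C$ must depend on $q$ and that the inequality extends to $a,b\ge 0$ by continuity are both correct and worth noting.
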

\begin{proof}
    For $a>0$ and $b>0, a+b \leq 2 \max \{a, b\}$, so we have $(a+b)^q \leq(2 a)^q+(2 b)^q$ for $q \geq 0$. Take $C=2^q$, we have $(a+b)^q \leq C(a^q+b^q)$ for $q \geq 0$.
This completes the proof of Lemma 3.1.
\end{proof}
\begin{lemma}( \cite{hornik1991approximation}) Let $\mathbb{C}^m(U)=\left\{u \in \mathbb{C}(U)~|~ \forall |\beta| \leq m, D^\beta u \in \mathbb{C}(U)\right\}$, for $\forall u \in \mathbb{C}^m(U), \forall \epsilon>0$, there exists a single hidden layer feedforward neural network $\mathcal{N}(x ; \theta) \in \mathfrak{S}^r$, such that
$$
\max _{|\beta| \leq m} \sup _{x \in J}|D^\beta u(x)-D^\beta(\mathcal{N}(x ; \theta))|<\epsilon,
$$
where $J$ is a compact set in $U, x=(x_1, \cdots, x_d),|\beta|=\beta_1+\cdots+\beta_d$, and
$$
D^\beta=D_x^\beta=\frac{\partial^{|\beta|}}{\partial x_1^{\beta_1} \cdots \partial x_d^{\beta_d}}.
$$
\end{lemma}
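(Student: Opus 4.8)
The plan is to read the statement as the assertion that the linear span
$\mathcal{R}=\operatorname{span}\{x\mapsto\sigma(w\cdot x+b):w\in\mathbb{R}^d,\ b\in\mathbb{R}\}$
of single-hidden-layer units is dense in $C^m(J)$ under the norm $\|v\|_{C^m(J)}=\max_{|\beta|\le m}\sup_{x\in J}|D^\beta v(x)|$, which is exactly the quantity the lemma bounds; since $r$ is free, any finite element of $\mathcal{R}$ is some $\mathcal{N}(\,\cdot\,;\theta)\in\mathfrak{S}^r$. (I adopt the standing hypotheses under which the cited result holds, namely $\sigma\in C^m$ and $\sigma$ non-polynomial.) I would carry out a three-stage density argument: (a) polynomials are $C^m$-dense in $C^m(J)$; (b) every polynomial lies in the $C^m$-closure of the ridge powers $\{(w\cdot x)^k\}$; and (c) each univariate power $t\mapsto t^k$ lies in the $C^m$-closure of $\{\sigma(\lambda t+b)\}$, whence $(w\cdot x)^k\in\overline{\mathcal{R}}$.

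The heart of the argument is stage (c), which I would first establish assuming $\sigma\in C^\infty$. View $\Phi_x(\lambda)=\sigma(\lambda(w\cdot x)+b)$ as a smooth function of the scalar $\lambda$. Its $\lambda$-divided differences are finite linear combinations of genuine hidden units $\sigma(\lambda_i(w\cdot x)+b)$, hence elements of $\mathcal{R}$, and as the mesh shrinks they converge not merely pointwise but in $C^m(J)$: the divided difference involves only evaluations of $\sigma$, and after $x$-differentiation only evaluations of $\sigma^{(j)}$ with $j\le m$ at the points $\lambda_i(w\cdot x)+b$ (times the factors $w^\beta$), all of which converge uniformly on the compact $J$ once $\sigma$ is smooth. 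Iterating $k$ times and evaluating at $\lambda=0$ places $(w\cdot x)^k\,\sigma^{(k)}(b)$ in $\overline{\mathcal{R}}$; non-polynomiality of $\sigma$ supplies a base point $b_k$ with $\sigma^{(k)}(b_k)\ne 0$, and dividing by that constant yields $(w\cdot x)^k\in\overline{\mathcal{R}}$.

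To remove the $C^\infty$ assumption and work with merely $\sigma\in C^m$, I would mollify: $\sigma_\delta=\sigma*\phi_\delta\in C^\infty$ is a $C^m(J)$-limit of Riemann sums of translates $\sigma(\,\cdot-s)=\sigma(1\cdot t-s)$, so $\sigma_\delta(\lambda\,\cdot+b)\in\overline{\mathcal{R}}$, and applying the smooth argument to $\sigma_\delta$ (non-polynomial for small $\delta$) recovers every monomial. For stage (b) I would invoke the classical polarization fact that the powers $\{(w\cdot x)^k:w\in\mathbb{R}^d\}$ span the homogeneous degree-$k$ polynomials, expressing each monomial $x^\beta$ as a finite combination of ridge powers, so every polynomial lies in $\overline{\mathcal{R}}$. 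Finally, given $u\in C^m(U)$ and $\epsilon>0$, stage (a) produces a polynomial $p$ with $\|u-p\|_{C^m(J)}<\epsilon/2$ (standard via mollification of a $C^m$-extension of $u$ followed by Weierstrass and an ``integrate-up'' argument on the top-order derivatives), and the preceding steps produce a concrete $\mathcal{N}(\,\cdot\,;\theta)\in\mathfrak{S}^r$ with $\|p-\mathcal{N}\|_{C^m(J)}<\epsilon/2$; the triangle inequality gives $\max_{|\beta|\le m}\sup_{x\in J}|D^\beta u-D^\beta\mathcal{N}|<\epsilon$.

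The main obstacle is stage (c): ensuring that the parameter-differentiation yields convergence in the \emph{full} $C^m$ norm, controlling all $x$-derivatives simultaneously rather than merely uniform convergence, together with the fact that for $\sigma\in C^m$ one cannot differentiate in $\lambda$ more than $m$ times directly. The mollification reduction is precisely what circumvents this, but verifying that $\sigma_\delta(\lambda\,\cdot+b)$ genuinely lies in the $C^m$-closure of $\mathcal{R}$ — i.e. uniform control of the derivatives of the translate-Riemann sums — is the technical crux. By comparison, the multivariate-to-univariate reduction through ridge functions and the polynomial $C^m$-density in stage (a) are routine.
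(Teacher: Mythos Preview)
Your proposal is a faithful sketch of the classical density argument behind Hornik's $C^m$-approximation theorem, and the three-stage structure (polynomial $C^m$-density, polarization to ridge powers, parameter-differentiation to recover monomials from the activation) is correct in outline. The technical points you flag --- uniform control of $x$-derivatives in the divided-difference limit, and mollification to bypass the $\sigma\in C^m$ restriction --- are indeed where the work lies, and your treatment of them is standard and adequate.

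However, you should be aware that the paper does \emph{not} prove this lemma at all. It is stated with a bare citation to Hornik (1991) and used as a black box in the proof of Theorem~3.1. The paper's ``proof'' of Lemma~3.2 is simply the reference. So there is nothing to compare your argument against: you have supplied a self-contained proof of a result the authors take for granted. If your goal was to match the paper's own proof, there is no such proof to match; if your goal was to verify the lemma independently, your outline does that, with the caveat that making stage~(c) fully rigorous (especially the Riemann-sum approximation of the mollified activation in $C^m$ norm) requires some care you have only gestured at.
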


\subsection{Proof of Theorem 3.1}
\begin{proof}
We take $U:=\Omega_T$. By Lemma 3.2 , there is a single hidden layer feedforward neural network $\hat{u}(t, x ; \theta) \in \mathfrak{S}^r$ such that
$$
\sup _{(t, x) \in \Omega_T}\left|\partial_t u(t, x)-\partial_t \hat{u}(t, x ; \theta)\right|+\max _{|\beta| \leq 2} \sup _{(t, x) \in \bar{\Omega}_T}\big|\partial_x^{(\beta)} u(t, x)-\partial_x^{(\beta)} \hat{u}(t, x ; \theta)\big|<\epsilon,
$$
where $\hat{u} \in \mathbb{C}^{1,2}\left(\Omega_T\right)$, and $\beta$ represents the order of differentiation of $u(t, x)$ with respect to $x$. Next, we prove that $\operatorname{Loss}(\hat{u}) \leq C \epsilon$. By locally Lipschitz continuous condition, Lemma 3.1 and Lemma 3.2, we have
$$
\begin{aligned}
& \iint_{\Omega_T}|h(t, x, \hat{u}(t-\tau(t, \hat{u}(t, x ; \theta)), x ; \theta), \hat{u}(t, x ; \theta))-h(t, x, u(t-\tau(t, u(t, x)), x), u(t, x))|^2 d t d x \\
& \leq \iint_{\Omega_T}\big(|\hat{u}(t-\tau(t, \hat{u}(t, x ; \theta)), x ; \theta)|^{\frac{q_1}{2}}+|\hat{u}(t, x ; \theta)|^{\frac{q_2}{2}}+|u(t-\tau(t, u(t, x)), x)|^{\frac{q_3}{2}}+|u(t, x)|^{\frac{q_4}{2}}\big)^2\\
& \quad (|\hat{u}(t-\tau(t, \hat{u}(t, x ; \theta)), x ; \theta)-u(t-\tau(t, u(t, x)), x)|+|\hat{u}(t, x ; \theta)-u(t, x)|)^2 d t d x \\
&\leq  8 \iint_{\Omega_T}\big(|\hat{u}(t-\tau(t, \hat{u}(t, x ; \theta)), x ; \theta)|^{q_1}+|\hat{u}(t, x ; \theta)|^{q_2}+|u(t-\tau(t, u(t, x)), x)|^{q_3}+|u(t, x)|^{q_4}\big) \\
&  \quad \big(|\hat{u}(t-\tau(t, \hat{u}(t, x ; \theta)), x ; \theta)-u(t-\tau(t, u(t, x)), x)|^2+|\hat{u}(t, x ; \theta)-u(t, x)|^2\big) d t d x \\
&\leq  C \iint_{\Omega_T}\big(|\hat{u}(t-\tau(t, \hat{u}(t, x ; \theta)), x ; \theta)-u(t-\tau(t, u(t, x)), x)|^{q_1}+|u(t-\tau(t, u(t, x)), x)|^{q_1}  \\
& \quad  +|\hat{u}(t, x ; \theta)-u(t, x)|^{q_2}+|u(t, x)|^{q_2}+|u(t-\tau(t, u(t, x)), x)|^{q_3}+|u(t, x)|^{q_4}\big) \\
&  \quad\big(|\hat{u}(t-\tau(t, \hat{u}(t, x ; \theta)), x ; \theta)-u(t-\tau(t, u(t, x)), x)|^2+|\hat{u}(t, x ; \theta)-u(t, x)|^2\big) d t d x \\
&\leq  C\big(\epsilon^{q_1}+|u(t-\tau(t, u(t, x)), x)|^{q_1}+\epsilon^{q_2}+|u(t, x)|^{q_2}+|u(t-\tau(t, u(t, x)), x)|^{q_3}+|u(t, x)|^{q_4}\big) \epsilon^2,
\end{aligned}
$$
where $C$ is a constant.

Because $u$ is the solution of equation ($\ref{equation}$), $\mathcal{G}[u](t, x)=0$. By Lemma 3.2 we have
$$
\begin{aligned}
&\operatorname{Loss}(\hat{u}) \\
&=  \|\mathcal{G}[\hat{u}](t, x ; \theta)\|_{\Omega_T}^2+\|\hat{u}(t, x ; \theta)-g(t, x)\|_{\Omega_0}^2 \\
&=  \|\mathcal{G}[\hat{u}](t, x ; \theta)-\mathcal{G}[u](t, x)\|_{\Omega_T}^2+\|\hat{u}(t, x ; \theta)-g(t, x)\|_{\Omega_0}^2 \\
&\leq  3 \iint_{\Omega_T}\big|\alpha\partial_t \hat{u}(t, x ; \theta)-\alpha\partial_t u(t, x)\big|^2 d t d x+3 \iint_{\Omega_T}|\Delta \hat{u}(t, x ; \theta)-\Delta u(t, x)|^2 d t d x \\
& \quad +3 \iint_{\Omega_T}|h(t, x, \hat{u}(t-\tau(t, \hat{u}(t, x ; \theta)), x ; \theta), \hat{u}(t, x ; \theta))-h(t, x, u(t-\tau(t, u(t, x)), x), u(t, x))|^2 d t d x\\
&\quad +\iint_{\Omega_0}|\hat{u}(t, x ; \theta)-g(t, x)|^2 d t d x \\
& \leq  C \epsilon^2 .
\end{aligned}
$$
The above $C$ represents different values in different places. 
This completes the proof of Theorem 3.1.
\end{proof}

\begin{remark}
    Theorem 3.1 provides the convergence of the loss function for PINN based on global fitting. For parabolic DPDEs with primary discontinuous points, the domain can be decomposed at the discontinuous points, and the convergence of the loss function for PINN based on piecewise fitting can be analyzed in each subdomain.
\end{remark} 
\begin{remark}
The convergence conclusion of the loss function for PINN can be extended to deep neural network $(r\ge2)$ which has stronger fitting performance.
\end{remark}  
\section{Numerical results}
In this section, we present the numerical results of our study, where we employ DANN and several variants of PINN, including APINN, QRES, ISC, and QSC, to solve DDEs with state-dependent delay and parabolic DPDEs. For parabolic DPDEs, our investigation focuses on equations with primary discontinuities, and we explore two distinct fitting strategies: global fitting and piecewise fitting. The hyperparameters for each example are presented in Table $\ref{Hyperparameters}$, and the scale factor $n$ in APINN is set to 5. All computations were conducted on a workstation equipped with a NVIDIA GeForce RTX 3080 GPU, utilizing Tensorflow 1.15. The accuracy of the approximation is measured by the relative $L^2$ error and the absolute error, where the exact solution $u(t_i, x_j)$ and the neural network approximate solution $\hat{u}(t_i, x_j ; \boldsymbol{\widetilde{\theta}}^*)$ are obtained at the data points $(t_i,x_j)$.

The calculation formula of the absolute error of $\hat{u}(t, x)$ at $(t_i, x_j)$ is
$$
|\hat{u}(t_i ; x_j ; \boldsymbol{\widetilde{\theta}}^*)-u(t_i, x_j)|,
$$
and the calculation formula of the relative $L^2$ error is
$$
\frac{\sqrt{\sum_{j}\sum_{i}|\hat{u}(t_i, x_j ; \boldsymbol{\widetilde{\theta}}^*)-u(t_i, x_j)|^2}}{\sqrt{\sum_{j}\sum_{i}|u(t_i, x_j)|^2}} .
$$

\begin{table*}[t]
    \centering
    \caption{Hyperparameters for the 4 Examples tested in this study.}
    \scalebox{0.85}{
    \begin{tabular}{lcccccrrr}
    \toprule
        \textbf{Example} & \textbf{Depth} & \textbf{Width} & \textbf{$N_f$} & \textbf{$N_I$} & \textbf{$\sigma(\sigma_1) + \sigma_2$} & \textbf{Optimizer} & \textbf{Iterations}  \\ \midrule
        Example 1 & 3 & 8 & 400 & 150 &softplus+tanh &L-BFGS-B   &20 000   \\ 
        Example 2 & 3 & 6 & 300 & 100  &sigmoid+tanh&L-BFGS-B   &20 000   \\
        Example 3 & 3 & 6 &  800 & 500  &sigmoid+tanh&L-BFGS-B   &10 000   \\
        Example 4 (d=3)&  4 & 15 &  6000 & 2000  &tanh+tanh&L-BFGS-B   &50 000   \\
        Example 4 (d=8) & 4 & 35 &  8000 & 3000  &tanh+tanh &L-BFGS-B   &50 000   \\
    \bottomrule
    \end{tabular}}
\label{Hyperparameters}
\end{table*}

\begin{exm}
    We first consider the Mackey-Glass-type equation as
$$
u_t=u_{x x}-a u(t, x)+\frac{b u(t-\tau, x)}{1+u^m(t-\tau, x)},
$$
which is used to simulate the change of the number of a single species population with time. Take $a=2, b=$ $1, m=2, \tau=1$, and the homogeneous Dirichlet boundary condition is selected as the boundary condition, we have
\begin{eqnarray}
\begin{cases}
u_t=u_{x x}-2 u+\frac{u(t-1, x)}{1+u^2(t-1, x)}+f(t, x),\quad (t, x) \in[0,2] \times[0, \pi], \\
u(t, x)=e^{-t} \sin x, \quad(t, x) \in[-1,0] \times[0, \pi], \\
u(t, 0)=u(t, \pi)=0,\quad  t \in[0,2],
 \end{cases}
\end{eqnarray}
where we choose an appropriate $f(t, x)$ so that the exact solution is
$$
u(t, x)=e^{-t} \sin x,\quad (t, x) \in[0,2] \times[0, \pi] .
$$
\end{exm}

In this study, we focus on a problem without primary discontinuous points and test the fitting performance of different models using global fitting approach. In order to guarantee the accurate satisfaction of the boundary conditions, we represent the approximate solution in the following form:
$$\hat{u}(t, x)=x(x-\pi)\mathcal{N}(t,x;\theta),$$
where $\mathcal{N}(t,x;\theta)$ represents a neural network with parameter $\theta$.

\begin{table*}[t]
    \centering
    \caption{The relative $L^2$ errors of $u$ obtained by different models.}
    \label{tab:univ-compa}
    \begin{tabular}{ccccccc}
    \toprule
        \textbf{Model} & \textbf{PINN} & \textbf{APINN} & \textbf{QRES} & \textbf{ISC} & \textbf{QSC}  & \textbf{DANN} \\ \midrule
        \textbf{Error $u$} & 5.44e-04 & 1.24e-03 &8.08e-04 &1.07e-03   &3.56e-04 & 1.26e-04 \\
    \bottomrule
    \end{tabular}
    \label{delay1table}
\end{table*}

\begin{figure}[H]
	\centering	
	\includegraphics[width=0.46\linewidth]{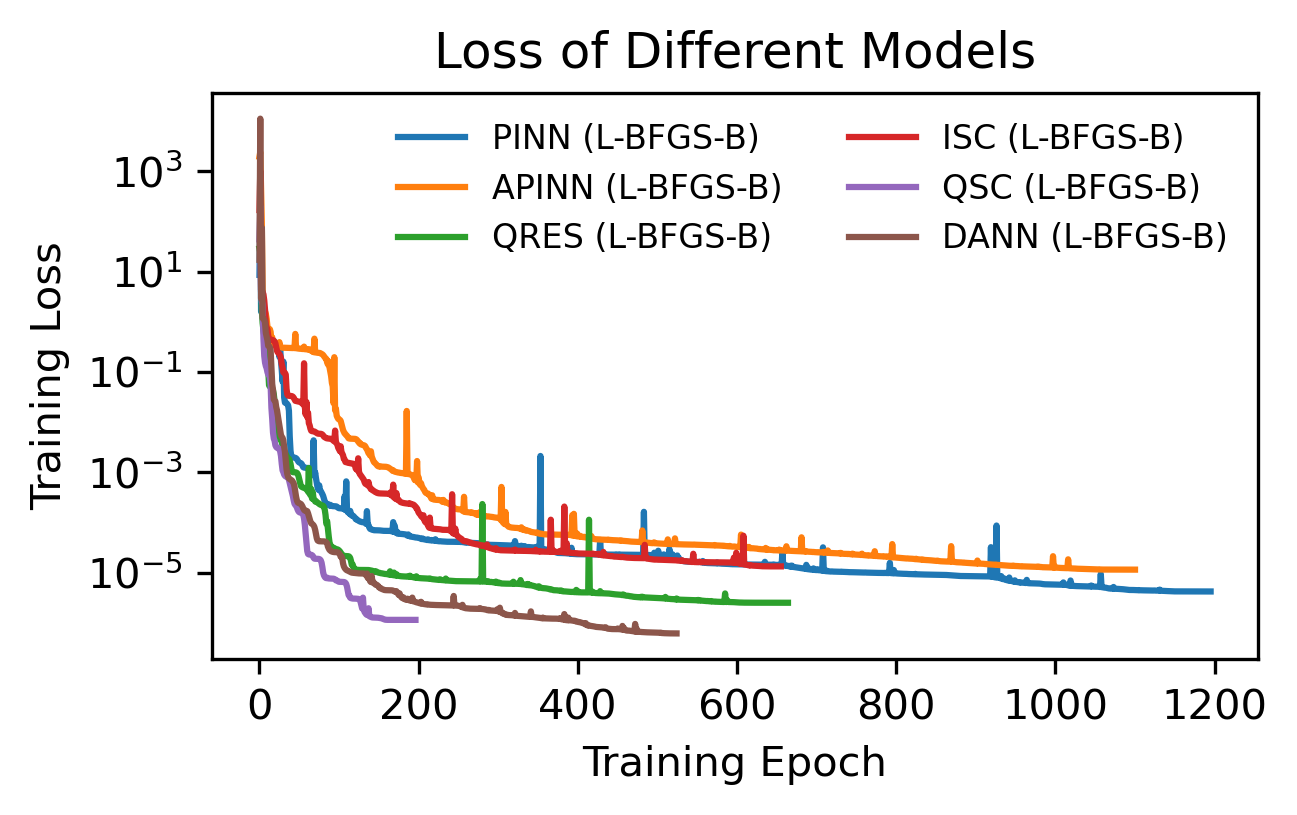}\\
	\caption{Training loss curves for solving Example 1.}
 \label{delay1}
\end{figure}

\begin{figure}[H]
	\centering
 \begin{minipage}{0.36\linewidth}
		\centering
		\includegraphics[width=\linewidth]{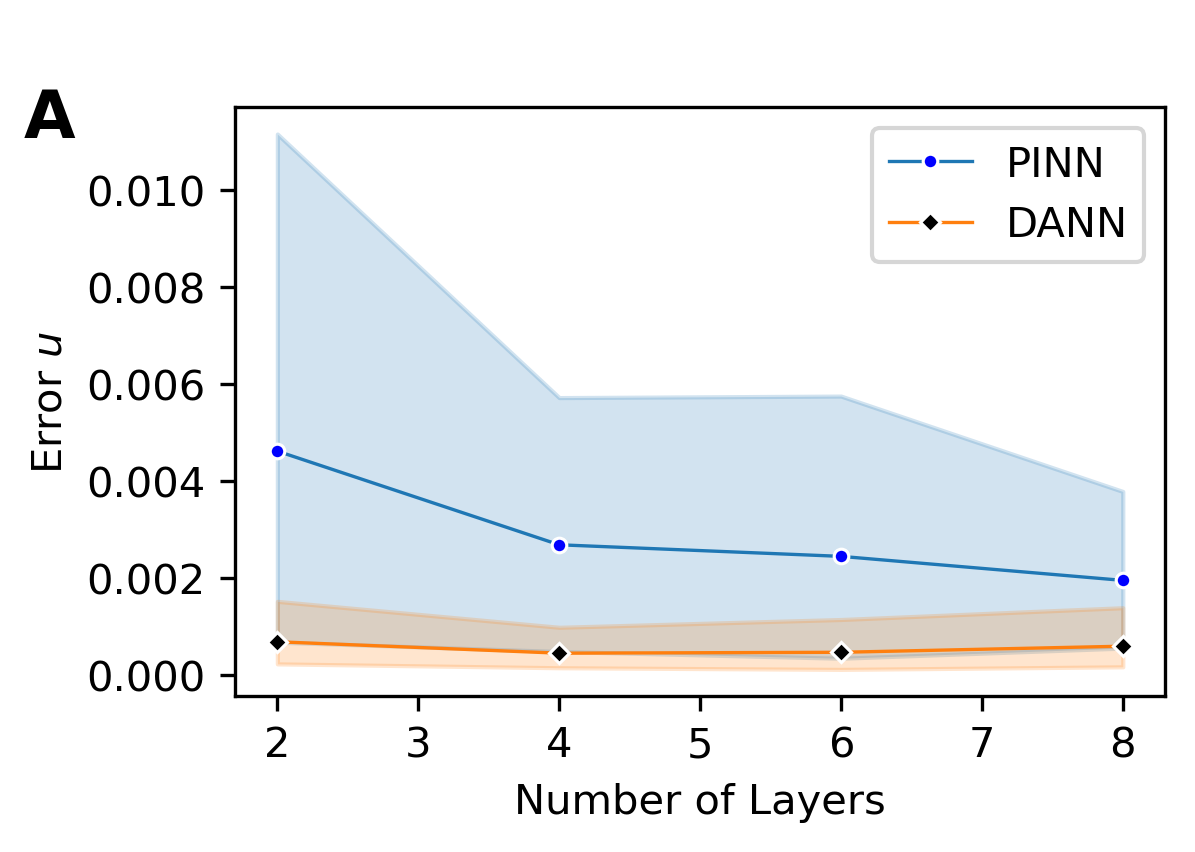}
		
	\end{minipage}
 \begin{minipage}{0.36\linewidth}
		\centering
		\includegraphics[width=\linewidth]{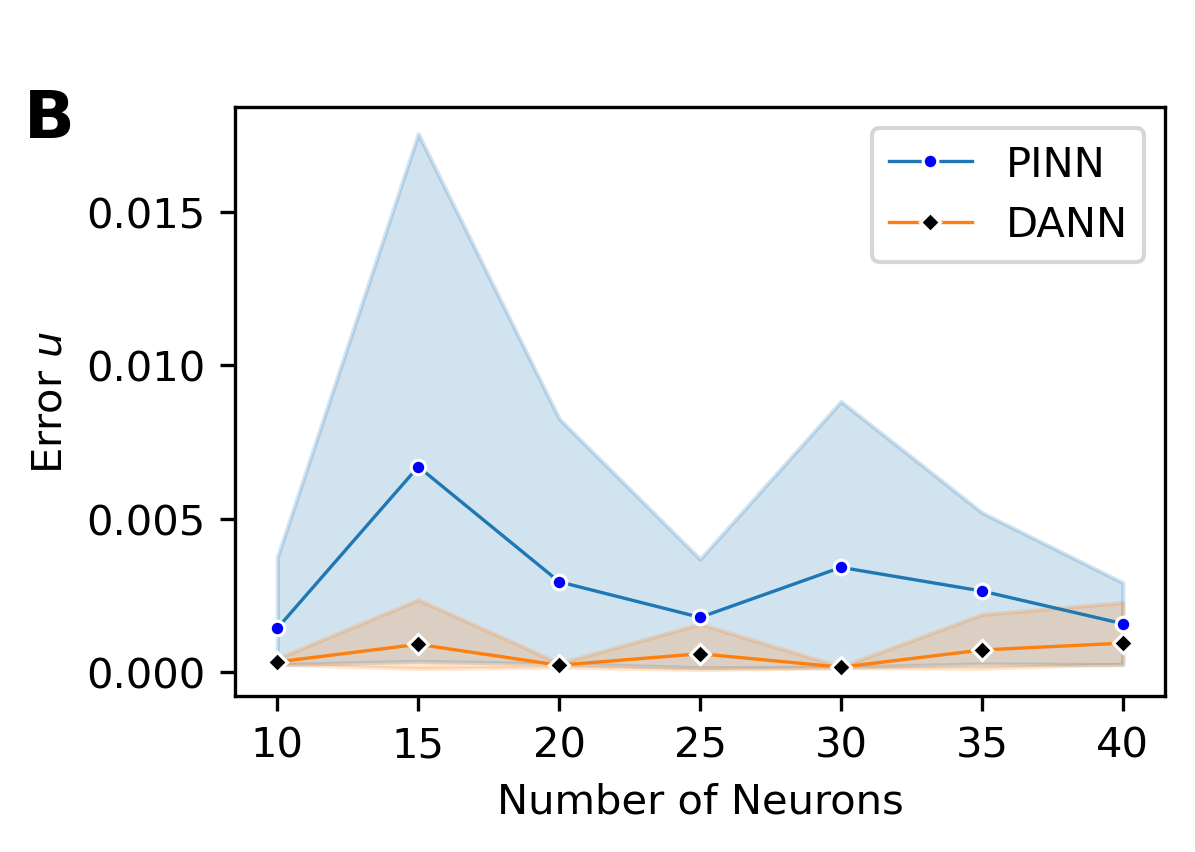}
		
	\end{minipage}

	\begin{minipage}{0.36\linewidth}
		\centering
		\includegraphics[width=\linewidth]{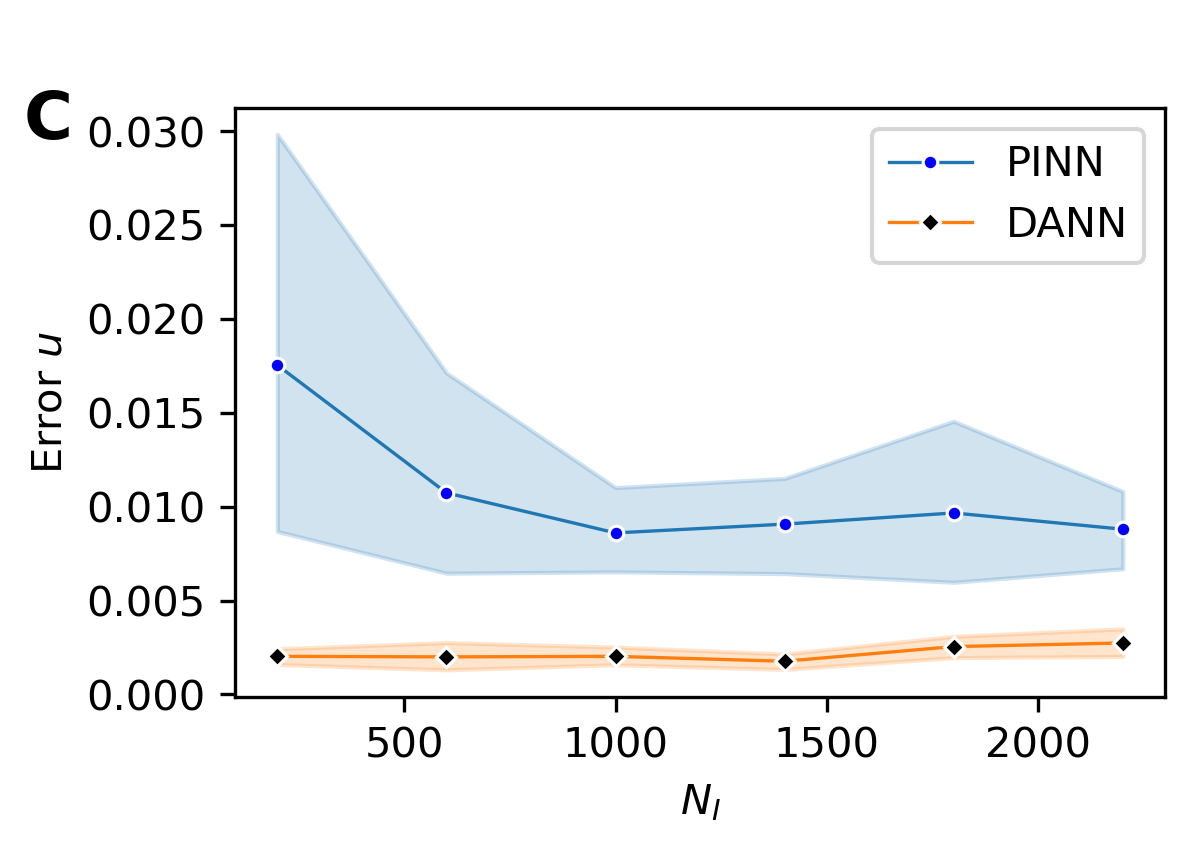}
		
	\end{minipage}
 \begin{minipage}{0.36\linewidth}
		\centering
		\includegraphics[width=\linewidth]{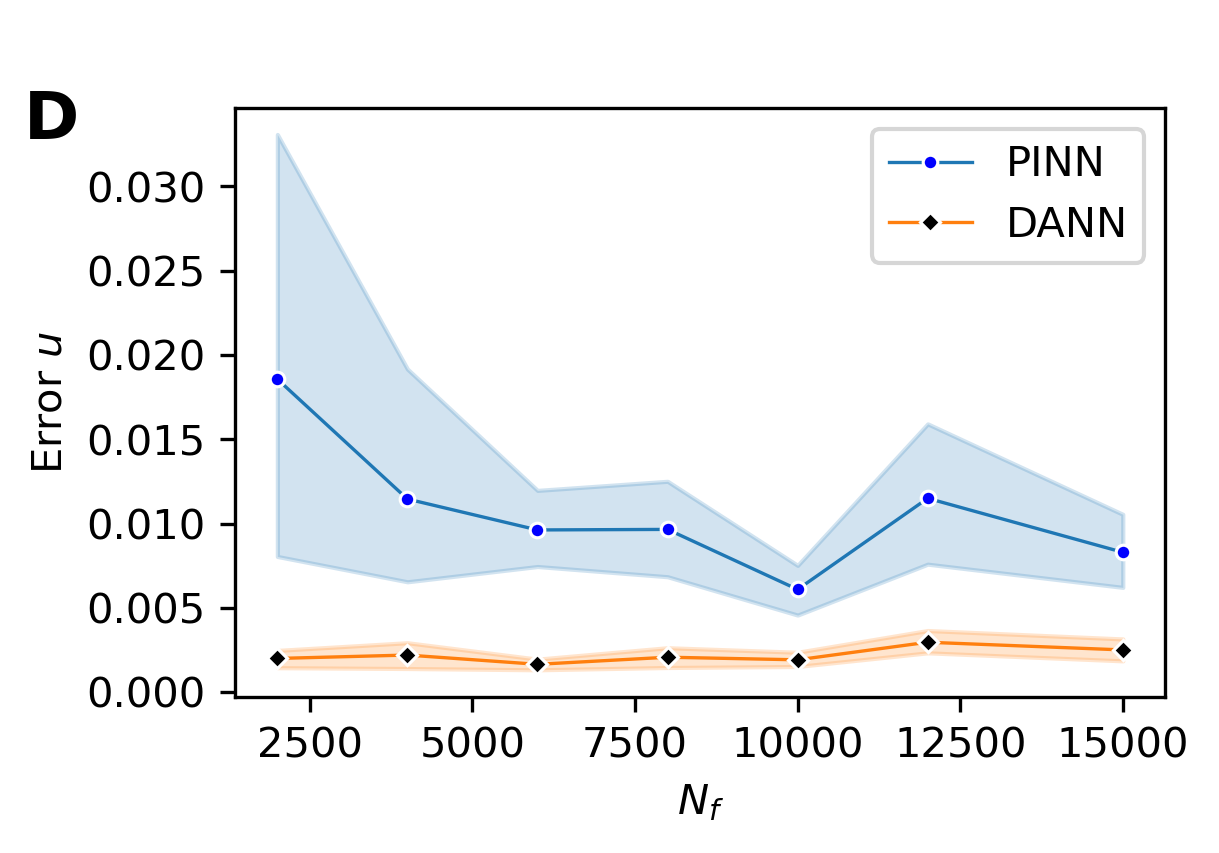}
		
	\end{minipage}
	\caption{(A) and (B): The figures show the relative $L^2$ errors of $u$ for 28 different architectures across four depth values (2, 4, 6, 8) and seven widths (10, 15, 20, 15, 30, 35, 40) for both PINN and DANN. (C) and (D): The figures show the relative $L^2$ errors of $u$ achieved by varying numbers of $N_I$ (200, 600, 1000, 1400, 1800, 2200) and $N_f$ (2000, 4000, 6000, 8000, 10000, 12000, 15000) for PINN and DANN. Lines indicate mean values and shades represent variances.}
	\vspace{-0.2cm}
     \label{delay2xiaolv}
\end{figure}

To demonstrate the superiority of DANN, Table $\ref{delay1table}$ presents the relative $L^2$ errors of $u$ obtained by different models. The results presented in Table $\ref{delay1table}$ demonstrate that, in this case without primary discontinuity points, DANN exhibits superior accuracy compared to other models when all models use the same parameter settings. In terms of convergence speed, Figure $\ref{delay1}$ shows that DANN is faster than PINN, APINN, QRES and ISC. While the convergence speed of QSC is slightly faster than that of DANN, the approximate solution obtained by DANN has higher accuracy.

To further explore the advantages of DANN over PINN, we conduct experiments on different network widths, depths, and varying numbers of training points $N_f$ and $N_I$ to address this issue. Activation function $\sigma$, $\sigma_1$ and $\sigma_2$ all set to tanh. Figures $\ref{delay2xiaolv}$ (A) and (B) illustrate how the relative $L^2$ errors vary with different network widths and depths, with $N_I=2200$ and $N_f=15000$. From Figures $\ref{delay2xiaolv}$ (A) and (B), we can conclude that DANN achieves higher accuracy than PINN with fewer network layers and neurons. Figures $\ref{delay2xiaolv}$ (C) and (D) demonstrate how the relative $L^2$ errors change with varying numbers of training points $N_f$ and $N_I$, with a network depth of 8 layers and 40 neurons per layer. From Figures $\ref{delay2xiaolv}$ (C) and (D), we observe that DANN achieves higher accuracy with fewer training points compared to PINN. These results highlight the significant advantage of DANN in terms of parameter efficiency.

\begin{exm}
Consider the following parabolic DPDE with nonvanishing delay
\begin{eqnarray}
        \begin{cases}
u_t=u_{x x}+u+u(t^3-3 t^2+3 t-1, x),\quad(t, x) \in[0,2] \times[0, \pi], \\
u(t, x)=t \sin x,\quad(t, x) \in[-1,0] \times[0, \pi], \\
u(t, 0)=u(t, \pi)=0,\quad t \in[0,1] .
     \end{cases}
		\end{eqnarray}
  The exact solution is
\begin{eqnarray}
u(t,x) =
        \begin{cases}
        
\left(\frac{1}{4} t^4-t^3+\frac{3}{2} t^2-t\right) \sin x,\quad (t, x) \in[0,1] \times[0, \pi], \\
\left(\frac{(t-1)^{13}}{52}-\frac{(t-1)^{10}}{10}+\frac{3}{14}(t-1)^7-\frac{(t-1)^4}{4}-\frac{1}{4}\right) \sin x,\quad (t, x) \in[1,2] \times[0, \pi] .
     \end{cases}
		\end{eqnarray}
\end{exm}

This problem involves two primary discontinuity points at $t=0$ and $t=1$. To ensure the accurate satisfaction of the boundary conditions, we employ an approximate solution in the following form
$$\hat{u}(t, x)=x(x-\pi)\mathcal{N}(t,x;\theta),$$
where $\mathcal{N}(t,x;\theta)$ represents a neural network with parameter $\theta$.

First, we use PINN, APINN, QRES, ISC, QSC, and DANN based on global fitting to solve this problem. The exact solution and the approximate solutions obtained by different models at $t=0.5,1.0,1.5$ are shown in Figure $\ref{delay2nihe}$. We can see that the approximate solution obtained by DANN is closest to the exact solution.

Next, we employ piecewise fitting approach to solve this problem. We divide $(t, x) \in[0,2] \times[0, \pi]$ into two subdomains, $(t, x) \in[0,1] \times[0, \pi]$ (Subdomain 1) and $(t, x) \in[1,2] \times[0, \pi]$ (Subdomain 2). Figure $\ref{delay2jie}$ presents the approximate solutions obtained  by different models based on global and piecewise fitting. It can be observed from Figure $\ref{delay2jie}$ that, regardless of the model chosen for global fitting among the six models, the approximate solution consistently displays noticeable discrepancies compared to the exact solution. However, when piecewise fitting is performed with any of the models, the resemblance between the approximate and exact solutions significantly improves. This observation leads to the conclusion that, in cases where the exact solution contains primary discontinuity points, piecewise fitting approach yields more accurate results.

Table $\ref{delay2table}$ provides the relative $L^2$ errors obtained by different models based on global and piecewise fitting. In global fitting, the approximate solutions accuracy obtained by these six models ranks from high to low as follows: DANN $>$ APINN $>$ QRES $>$ QSC $>$ ISC $>$ PINN, which means DANN has
\begin{figure}[H]
    \centering
    \subfigure{\label{Fig:num1a}
        \includegraphics[width=12cm,height=3cm]{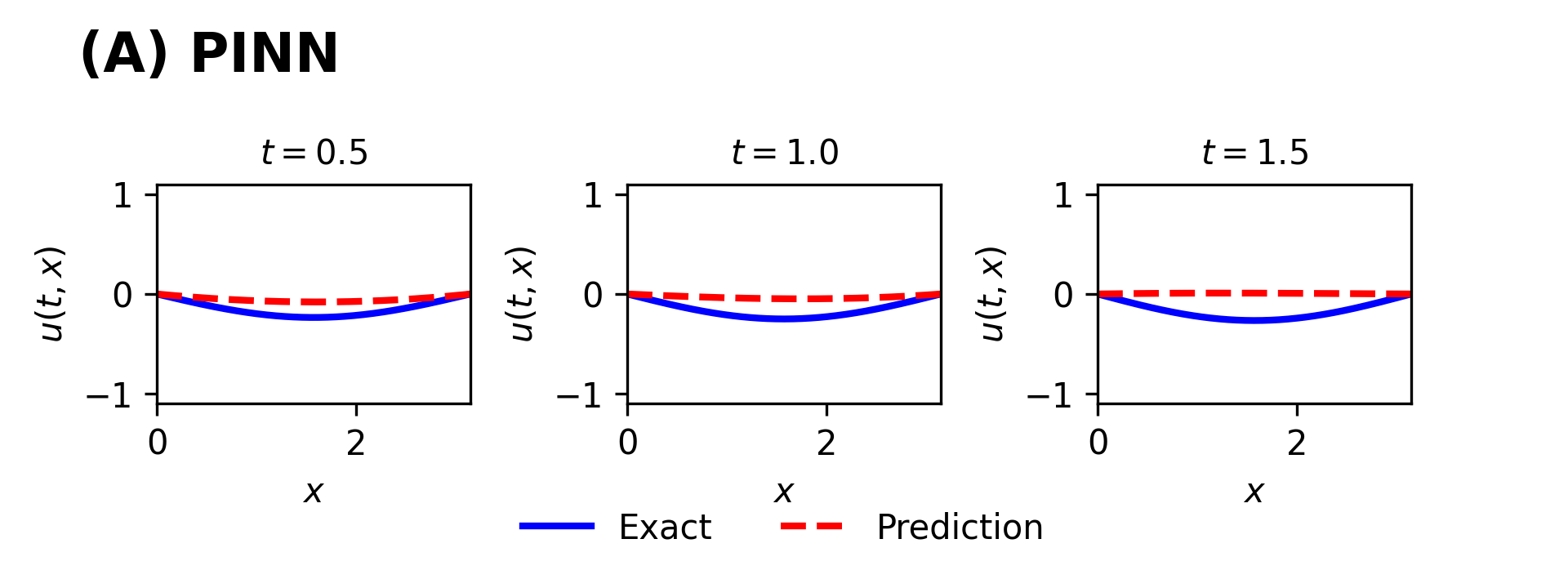}
    }

    \subfigure{\label{Fig:num1b}
        \includegraphics[width=12cm,height=3cm]{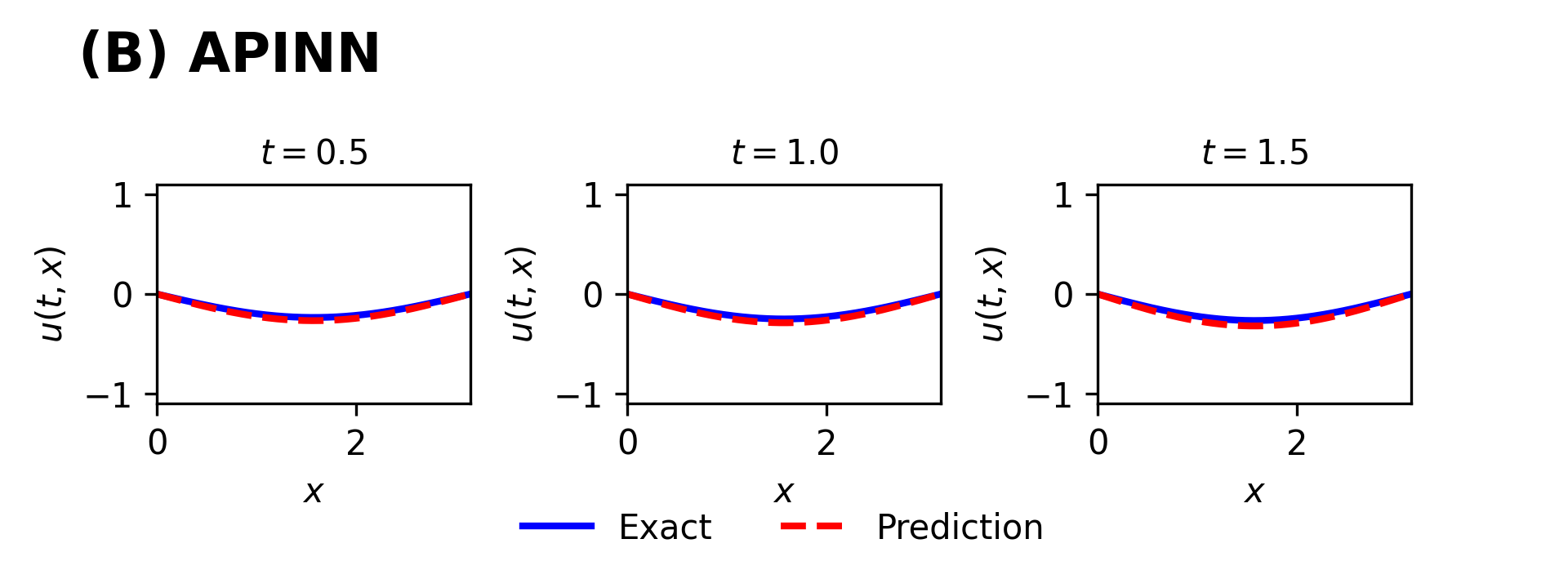}
    }

    \subfigure{\label{Fig:num1c}
        \includegraphics[width=12cm,height=3cm]{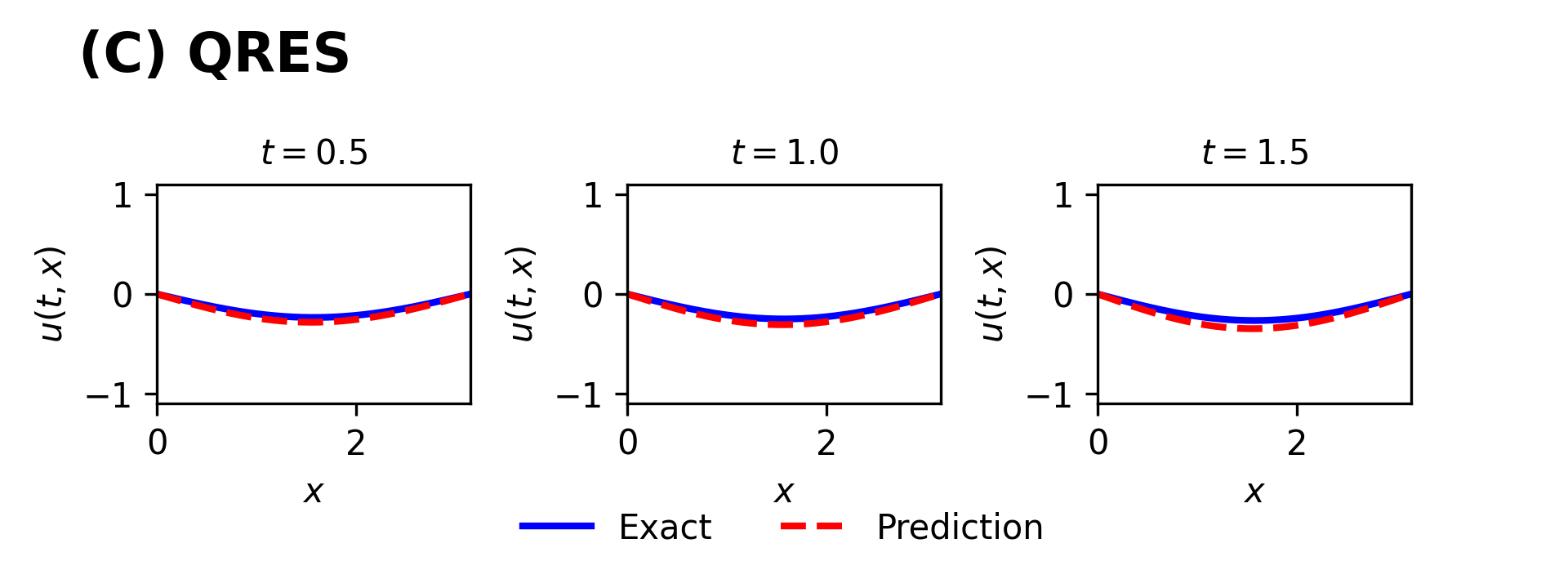}
    }

    \subfigure{\label{Fig:num1d}
        \includegraphics[width=12cm,height=3cm]{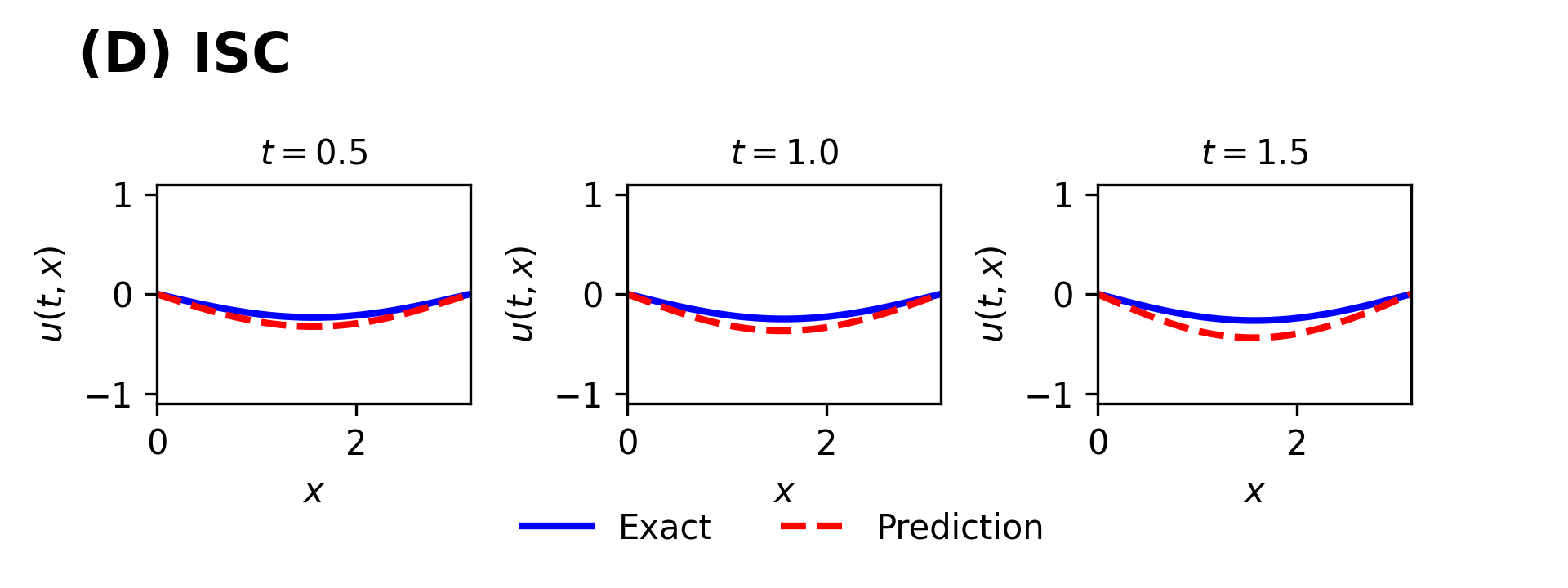}                
    }

    \subfigure{\label{Fig:num1d}
        \includegraphics[width=12cm,height=3cm]{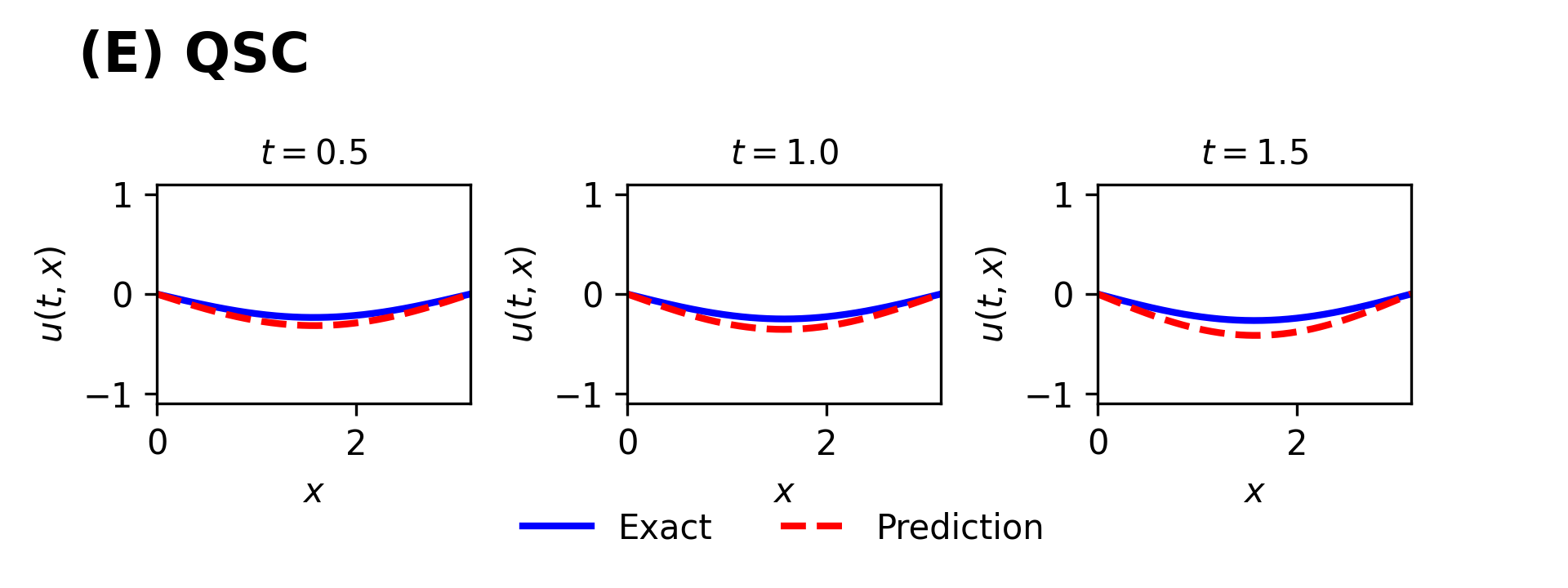}                
    }

    \subfigure{\label{Fig:num1d}
        \includegraphics[width=12cm,height=3cm]{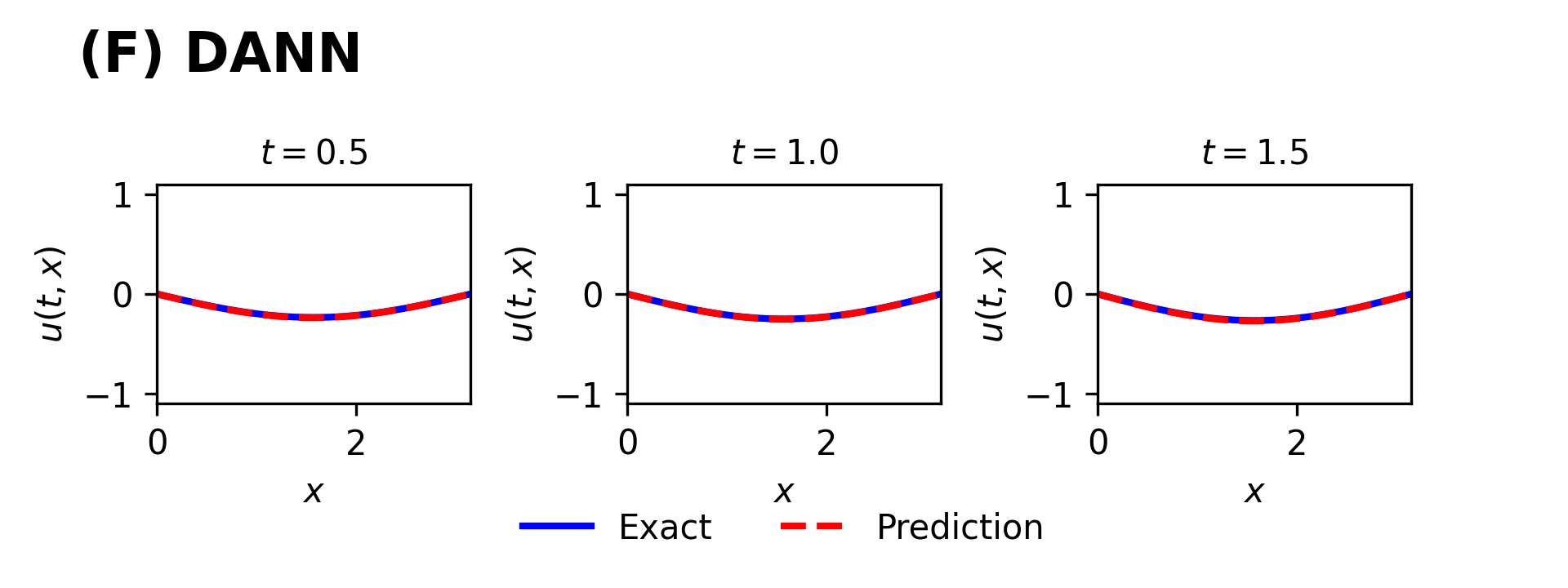}                
    }
    \caption{The exact solution (blue) and the approximate solutions (red) obtained by different models at $t=0.5, 1, 1.5$.}
    \label{delay2nihe}
\end{figure}
\begin{figure}[H]
	\centering
	\begin{minipage}{0.32\linewidth}
		\centering
		\includegraphics[width=0.9\linewidth]{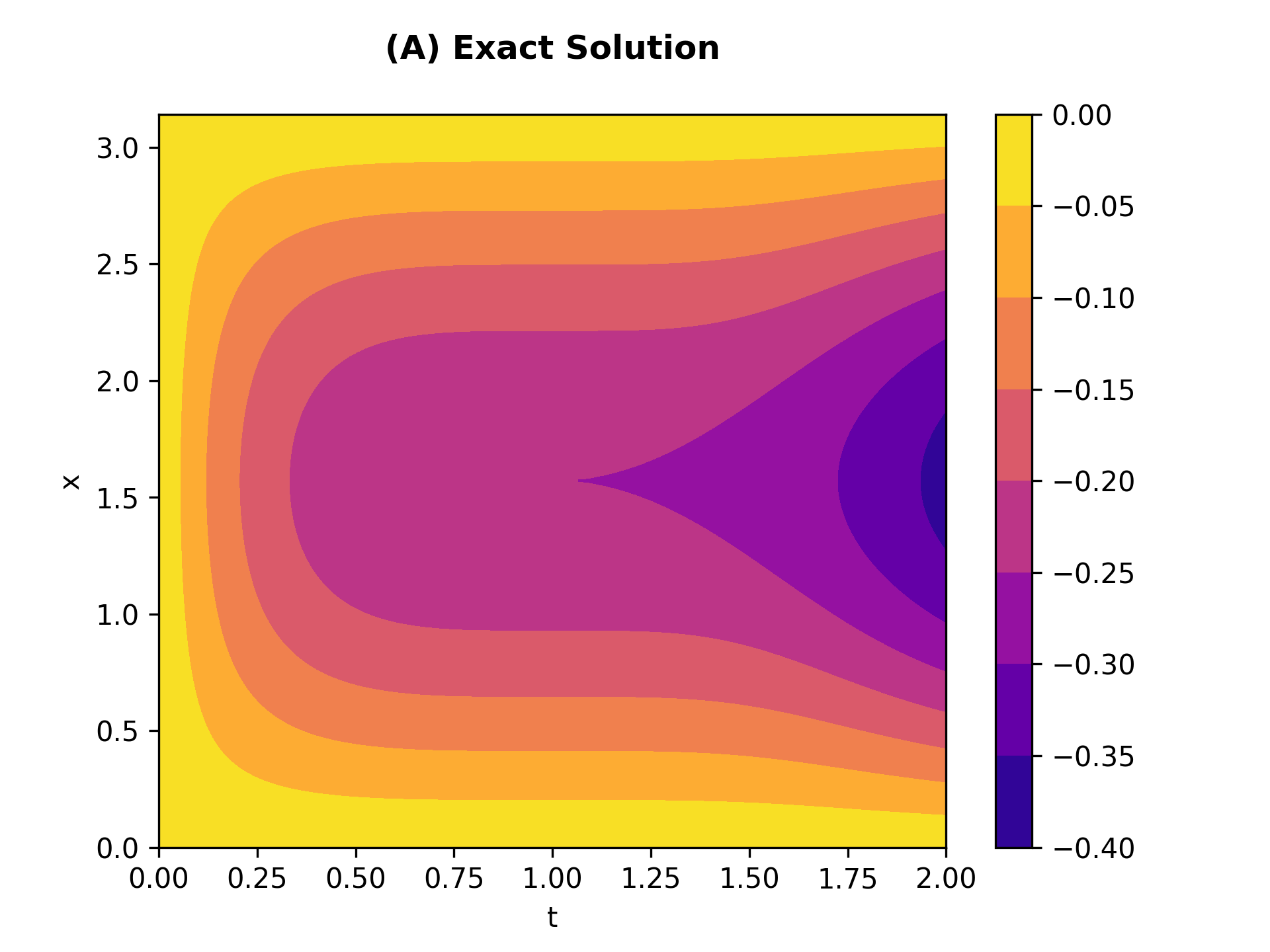}
		
		\label{chutian1}
	\end{minipage}
	
	\begin{minipage}{0.24\linewidth}
		\centering
		\includegraphics[width=\linewidth]{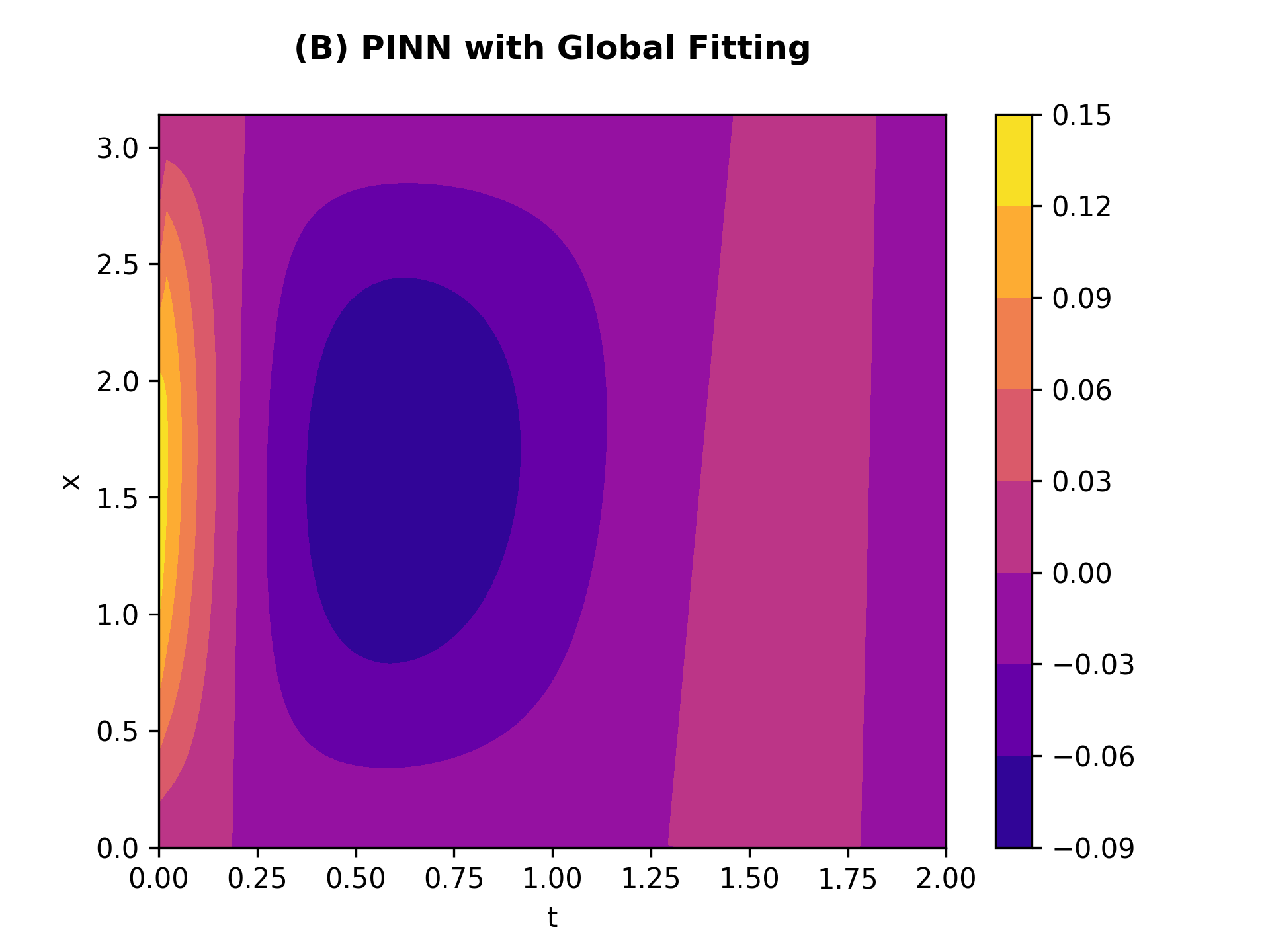}
		
		\label{chutian3}
	\end{minipage}
	\begin{minipage}{0.24\linewidth}
		\centering
		\includegraphics[width=\linewidth]{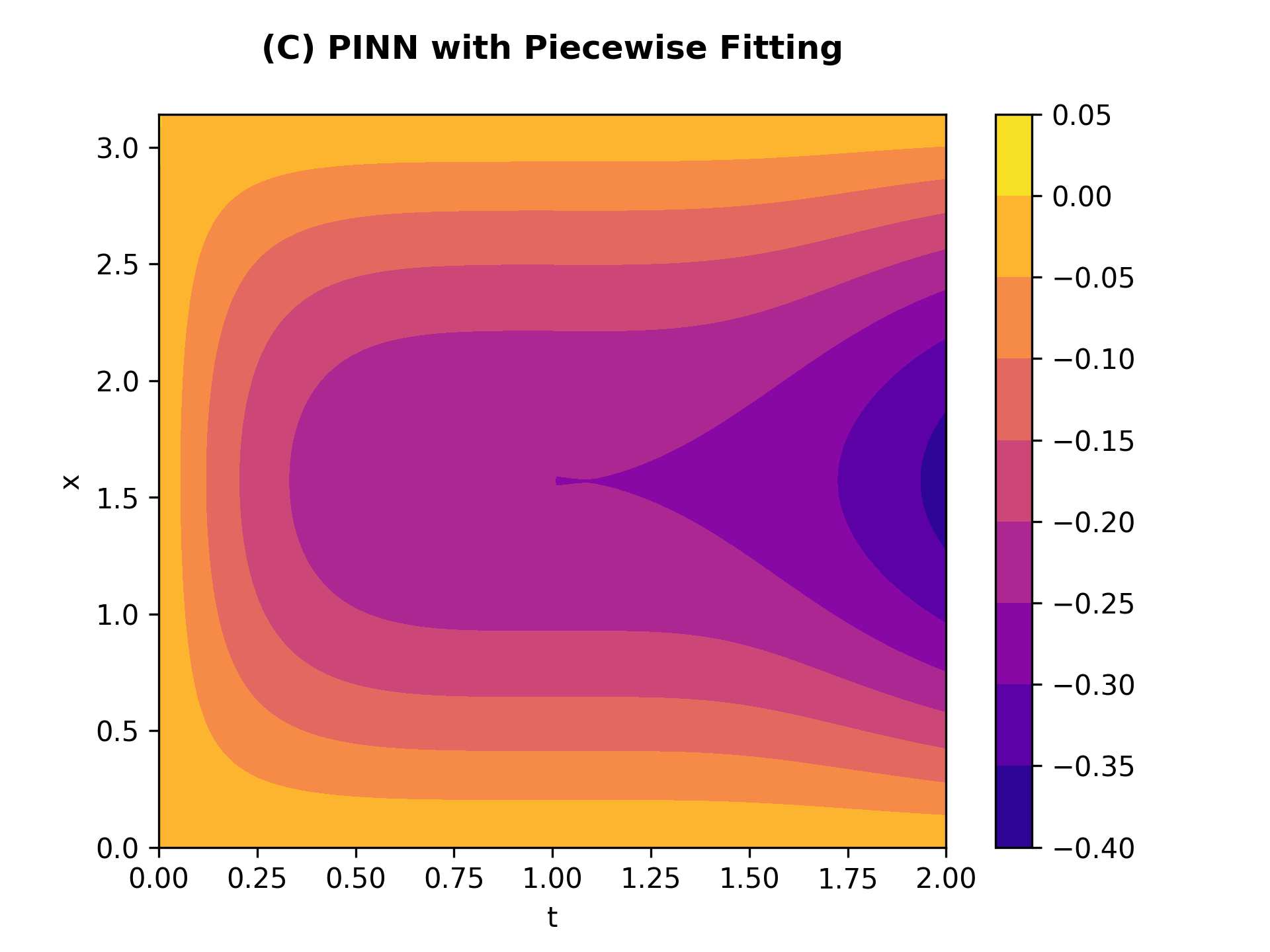}
		
		\label{chutian4}
	\end{minipage}
 \begin{minipage}{0.24\linewidth}
		\centering
		\includegraphics[width=\linewidth]{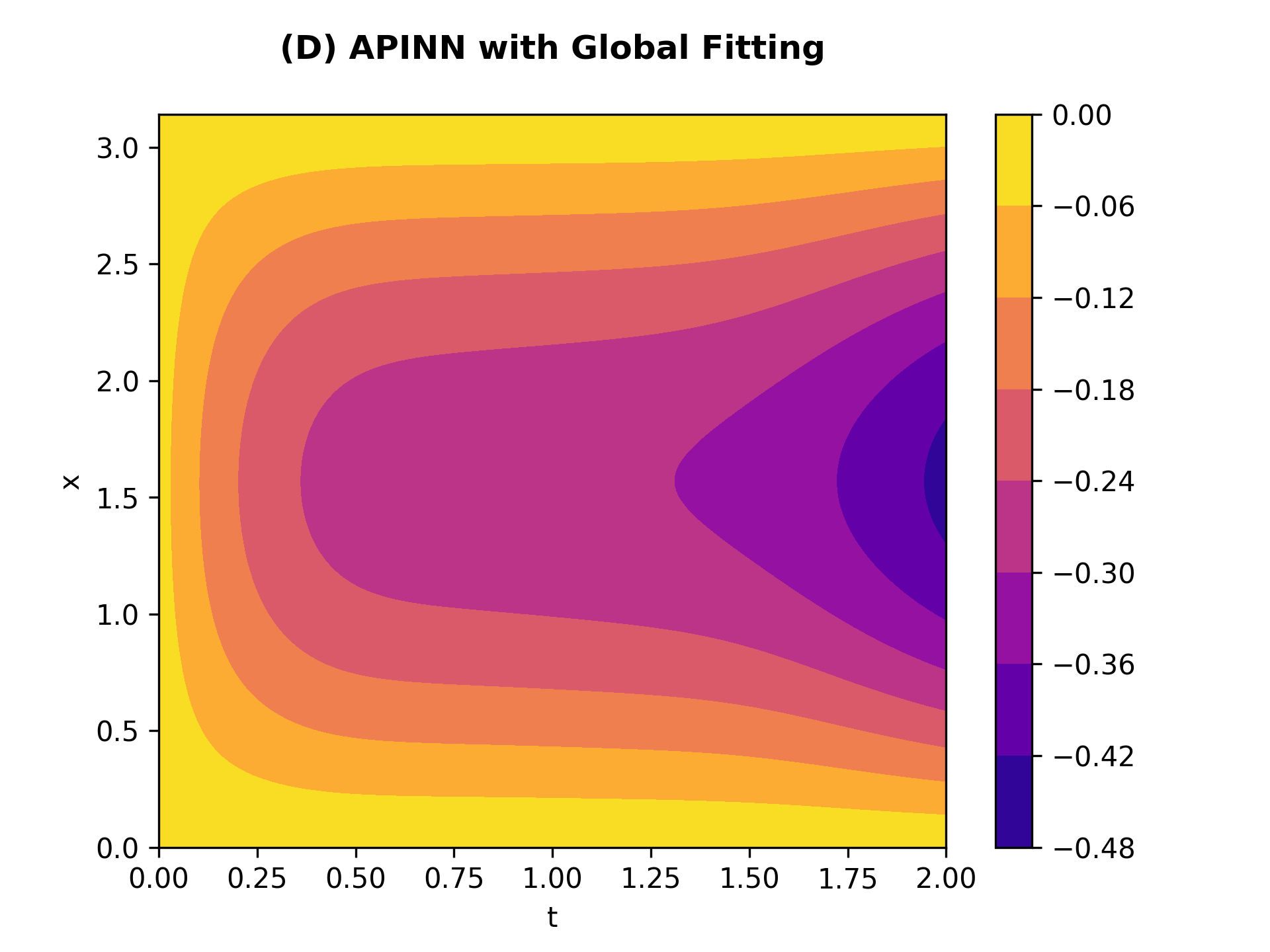}
		
		\label{chutian2}
	\end{minipage}
 \begin{minipage}{0.24\linewidth}
		\centering
		\includegraphics[width=\linewidth]{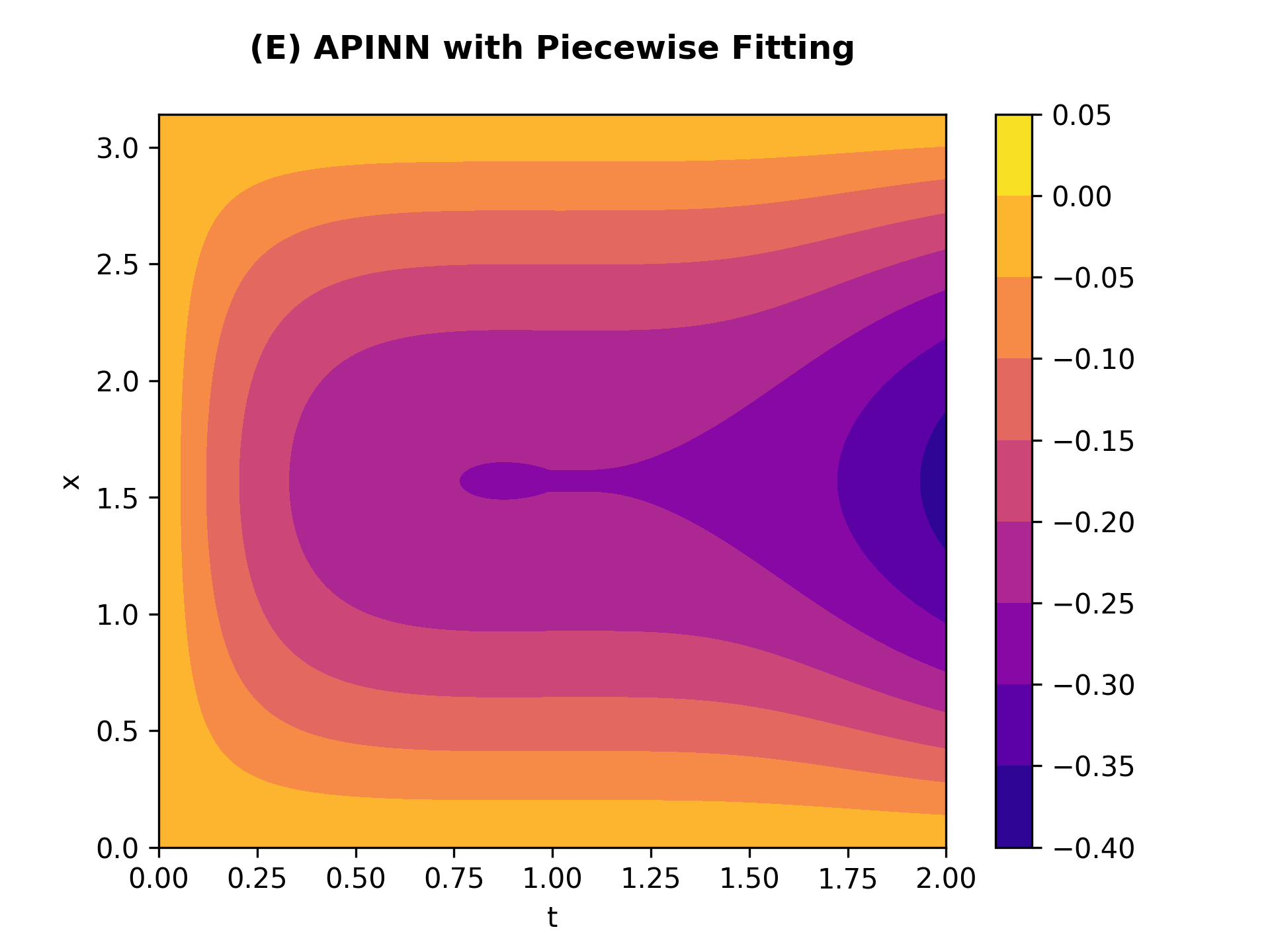}
		
		\label{chutian2}
	\end{minipage}

 \begin{minipage}{0.24\linewidth}
		\centering
		\includegraphics[width=\linewidth]{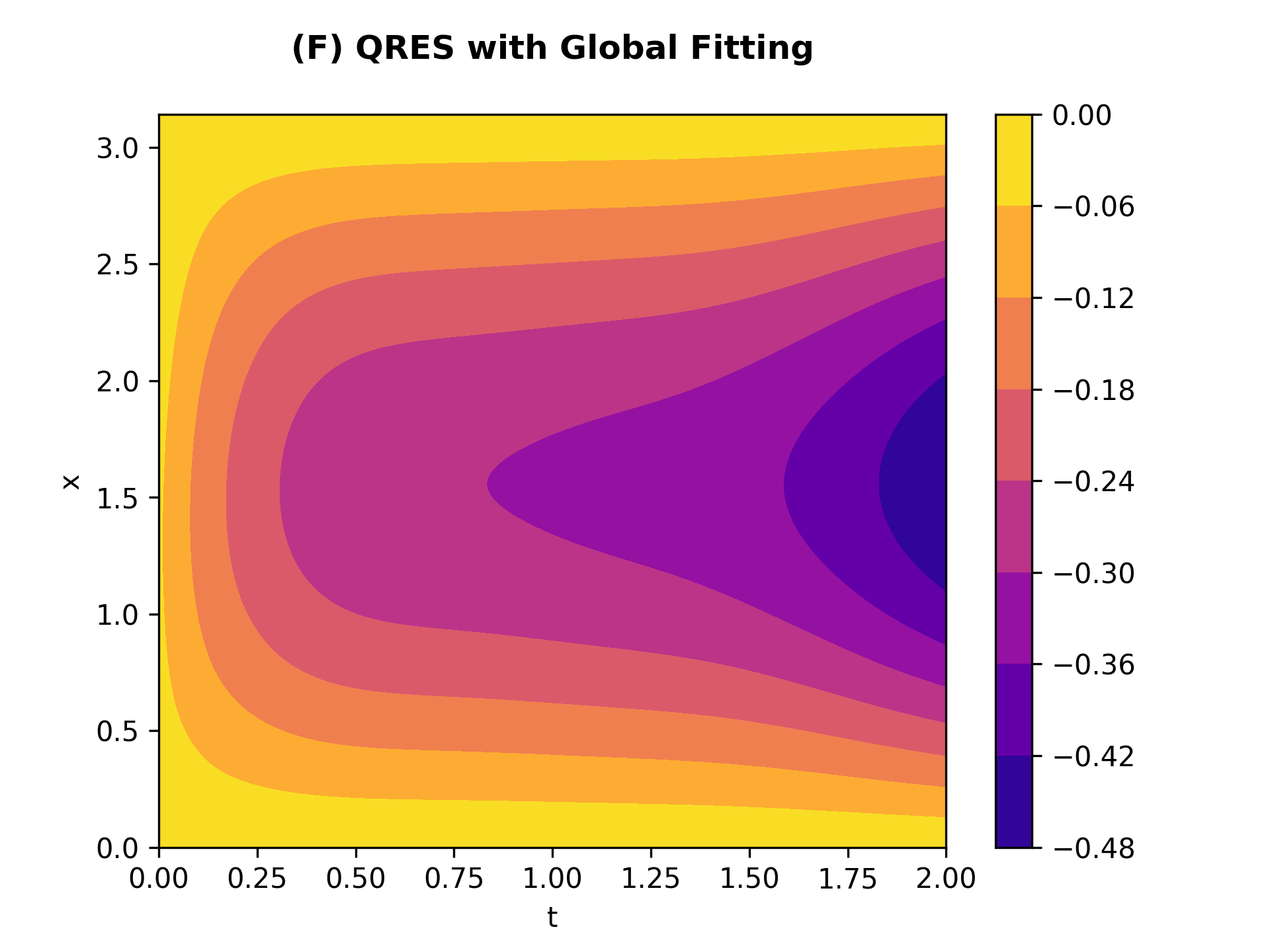}
		
		\label{chutian3}
	\end{minipage}
	\begin{minipage}{0.24\linewidth}
		\centering
		\includegraphics[width=\linewidth]{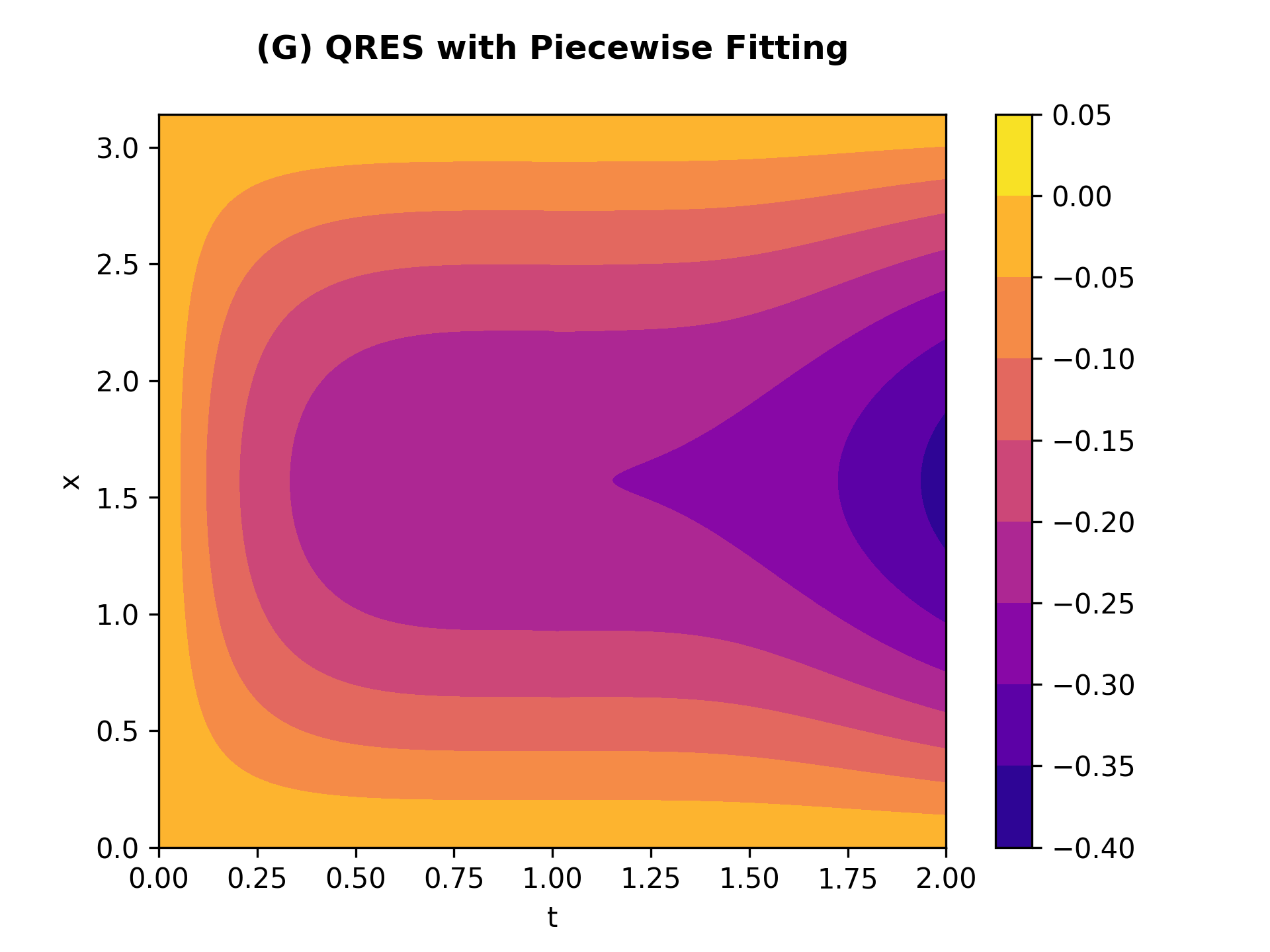}
		
		\label{chutian4}
	\end{minipage}
 \begin{minipage}{0.24\linewidth}
		\centering
		\includegraphics[width=\linewidth]{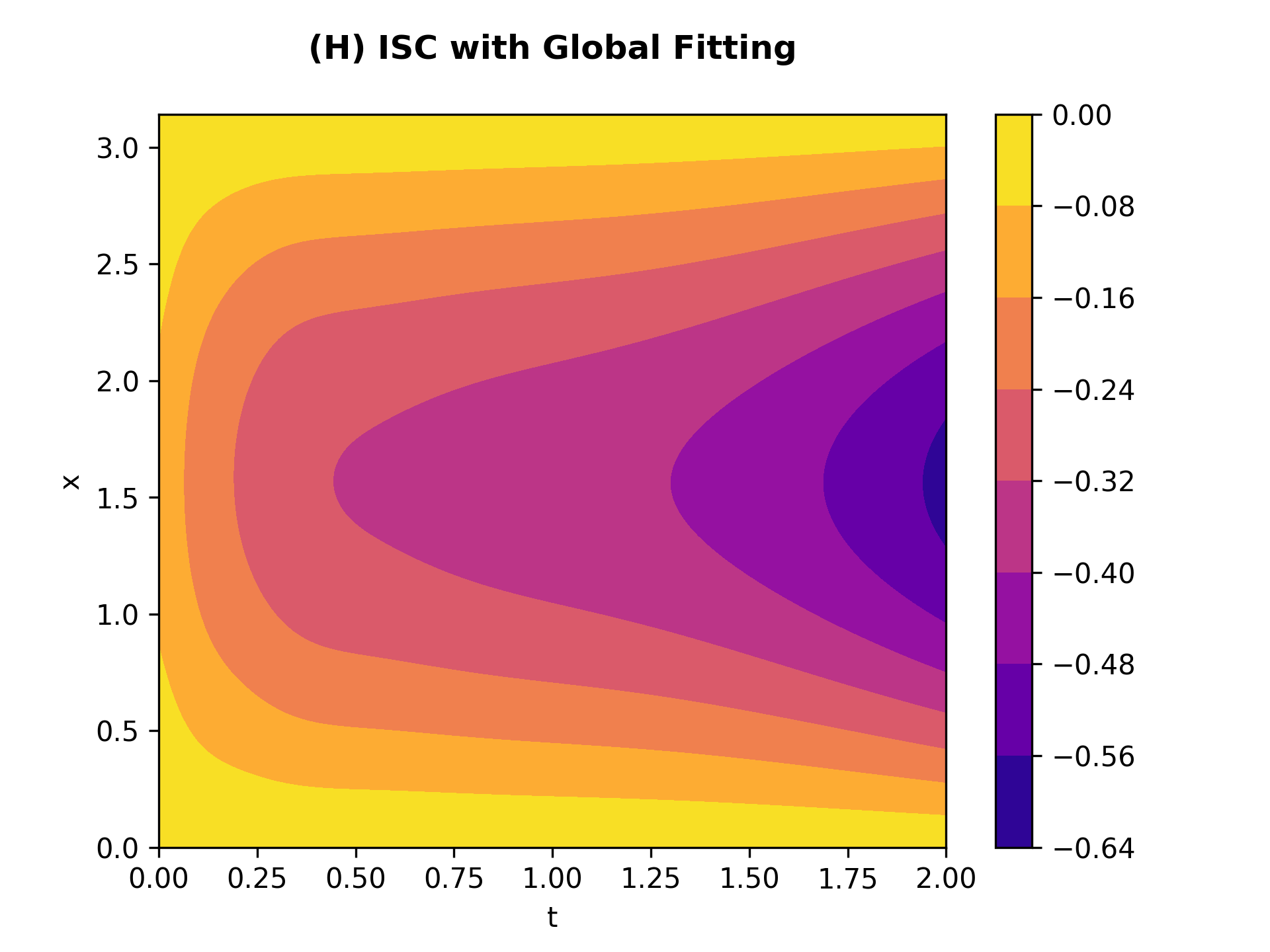}
		
		\label{chutian2}
	\end{minipage}
 \begin{minipage}{0.24\linewidth}
		\centering
		\includegraphics[width=\linewidth]{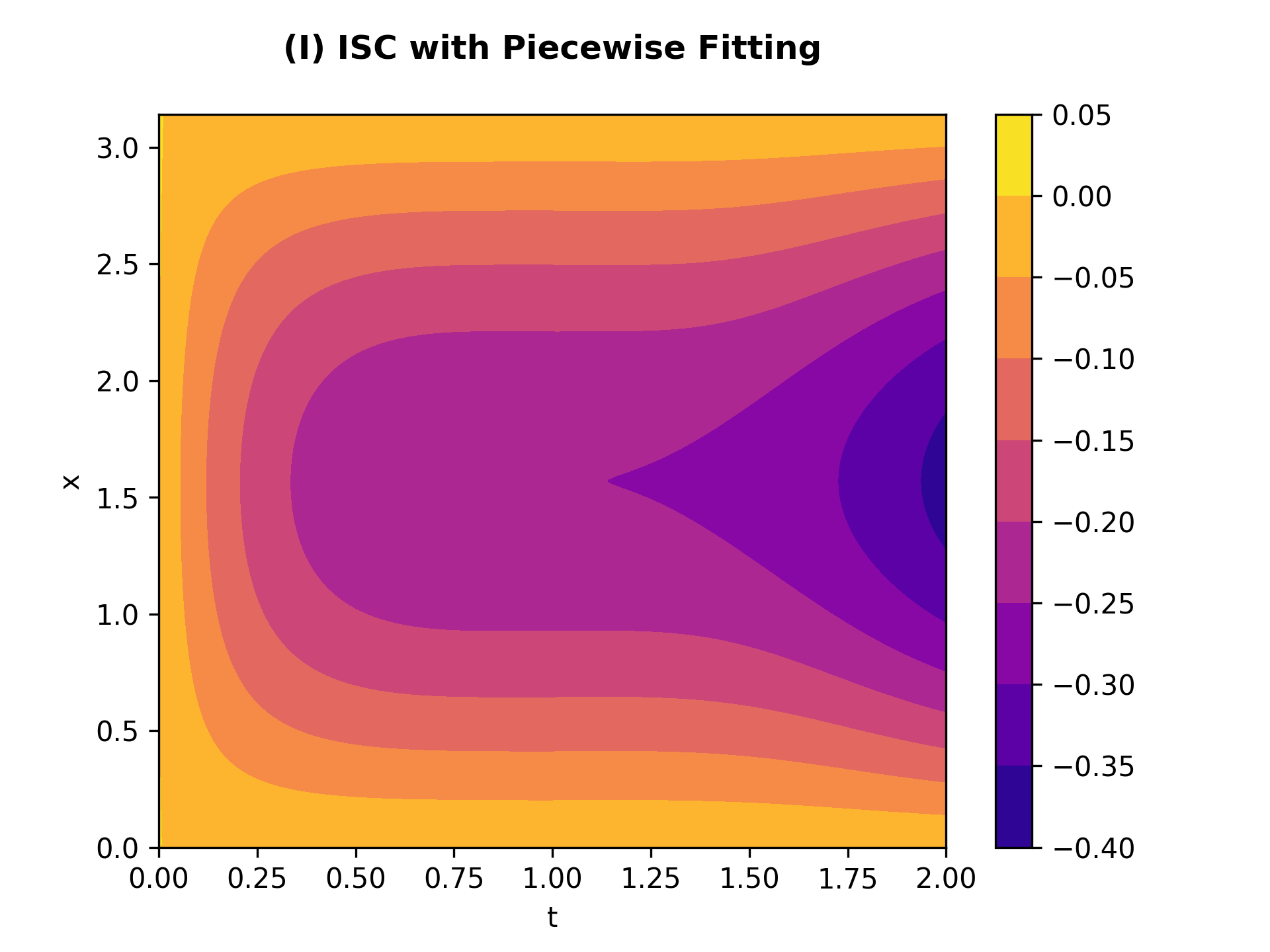}
		
		\label{chutian2}
	\end{minipage}

 \begin{minipage}{0.24\linewidth}
		\centering
		\includegraphics[width=\linewidth]{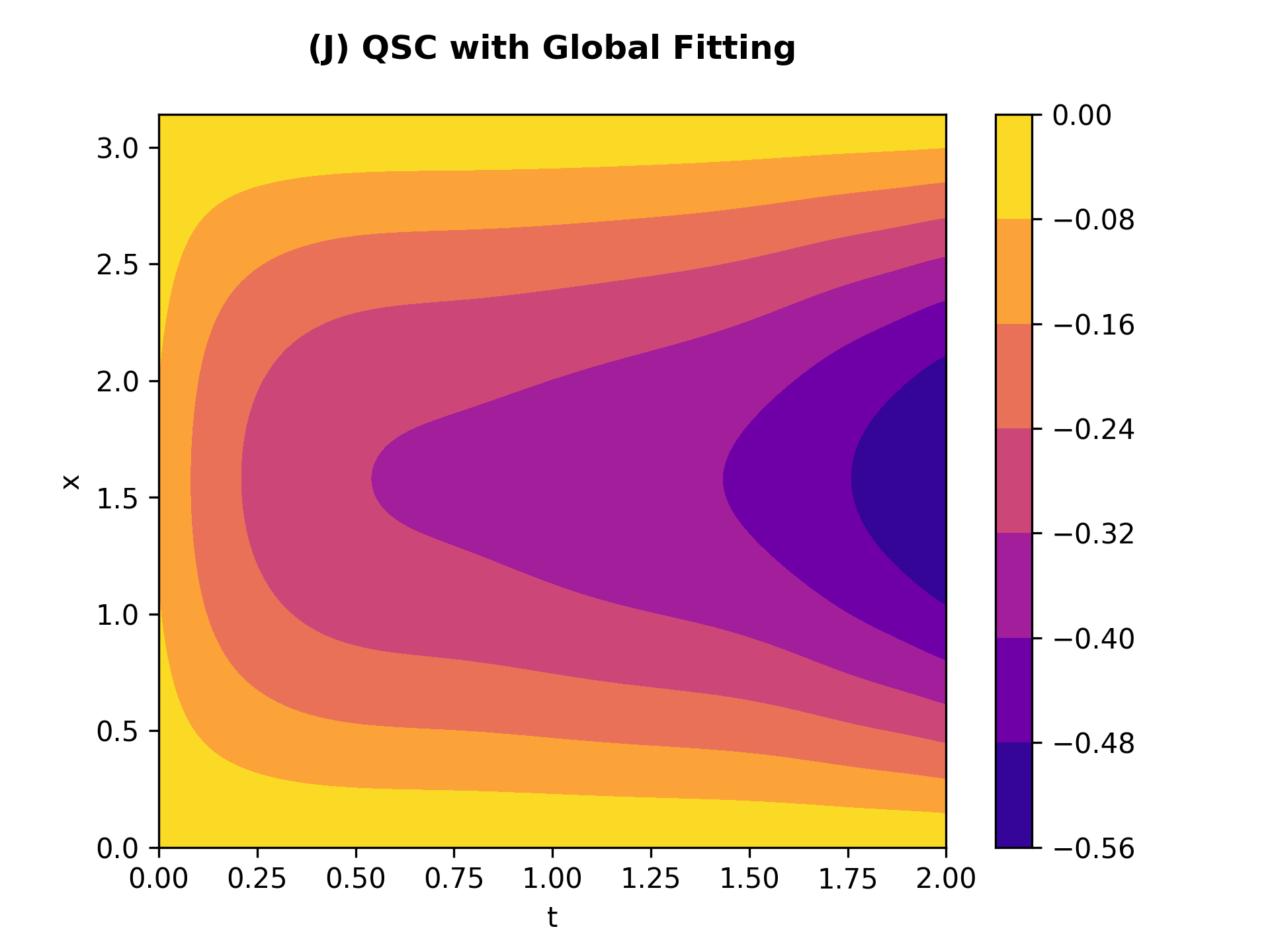}
		
		\label{chutian3}
	\end{minipage}
	\begin{minipage}{0.24\linewidth}
		\centering
		\includegraphics[width=\linewidth]{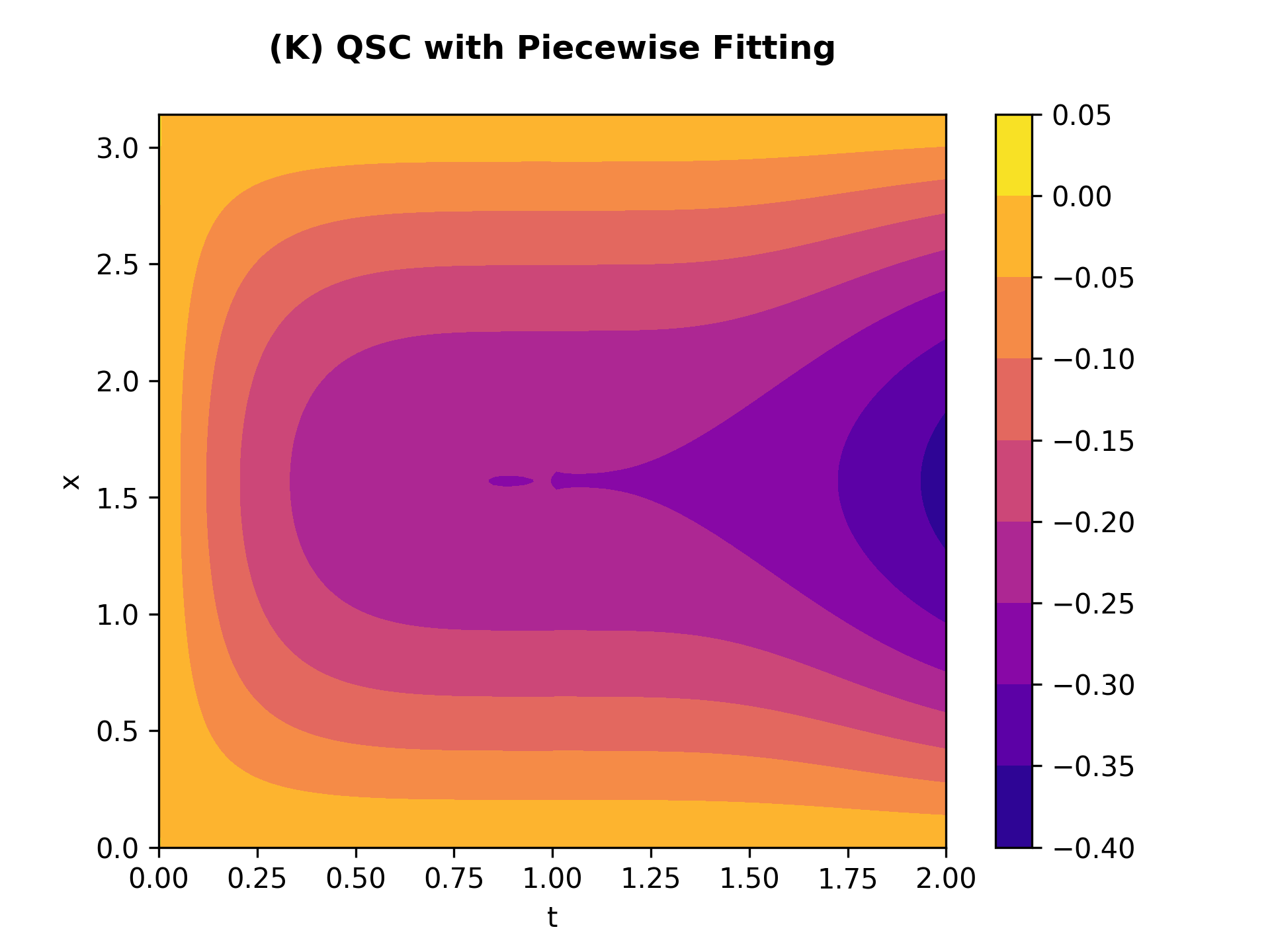}
		
		\label{chutian4}
	\end{minipage}
 \begin{minipage}{0.24\linewidth}
		\centering
		\includegraphics[width=\linewidth]{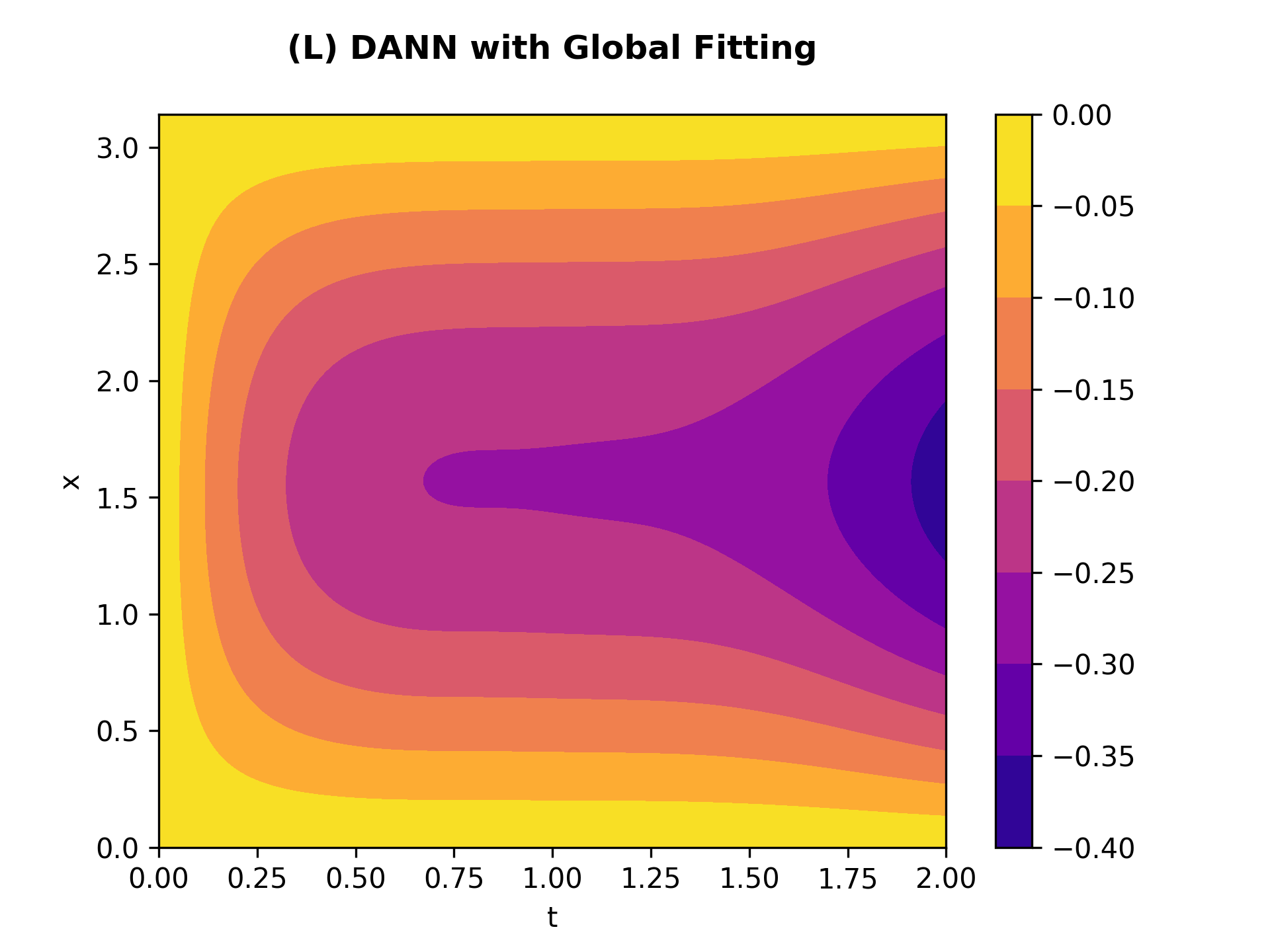}
		
		\label{chutian2}
	\end{minipage}
 \begin{minipage}{0.24\linewidth}
		\centering
		\includegraphics[width=\linewidth]{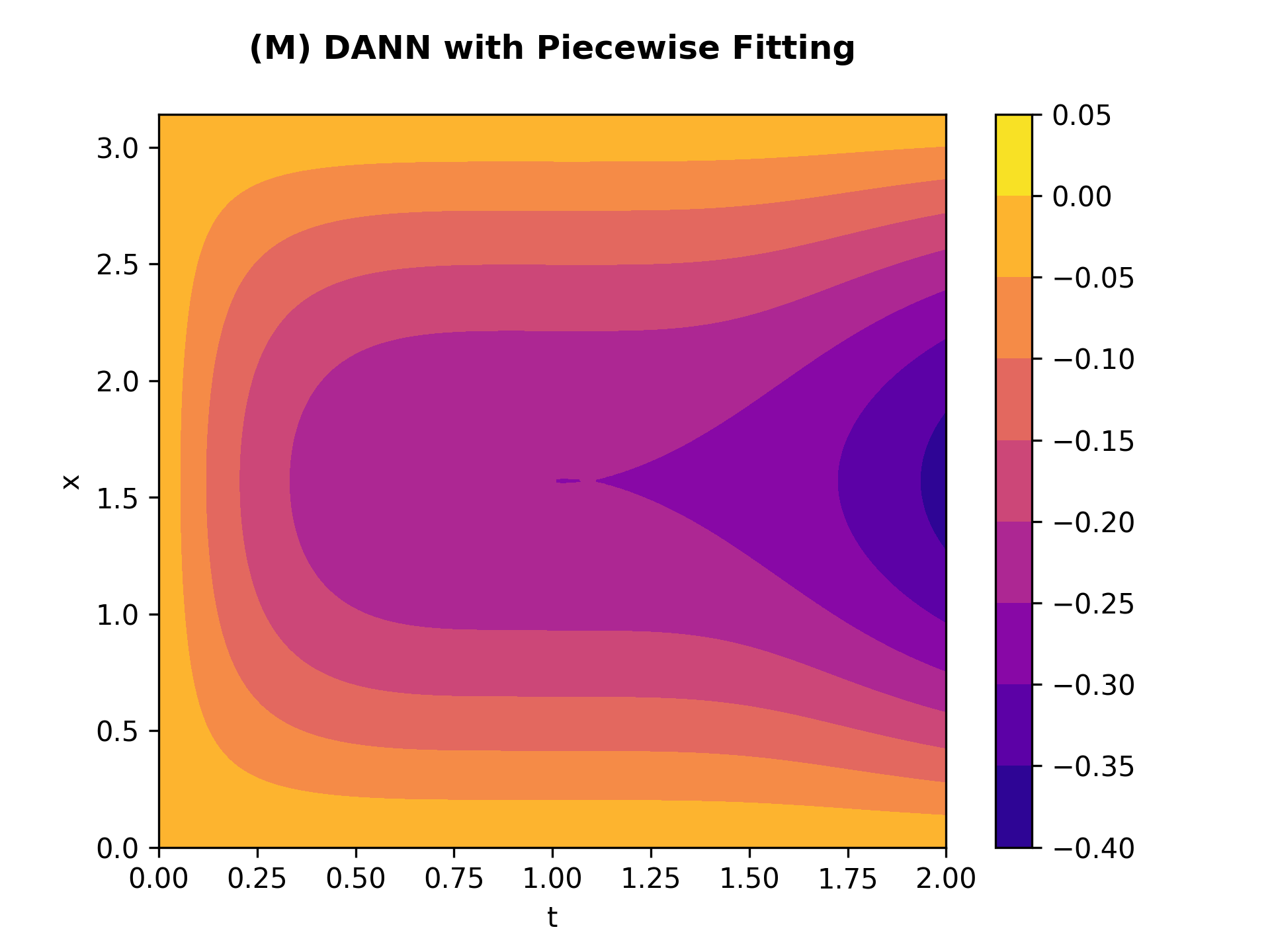}
		
		\label{chutian2}
	\end{minipage}
 \caption{The exact solution (A) and the approximate solutions obtained by different models based on global and piecewise fitting. }
 \label{delay2jie}
\end{figure}
 \begin{figure}[H]
	\centering
	\begin{minipage}{0.32\linewidth}
		\centering
		\includegraphics[width=\linewidth]{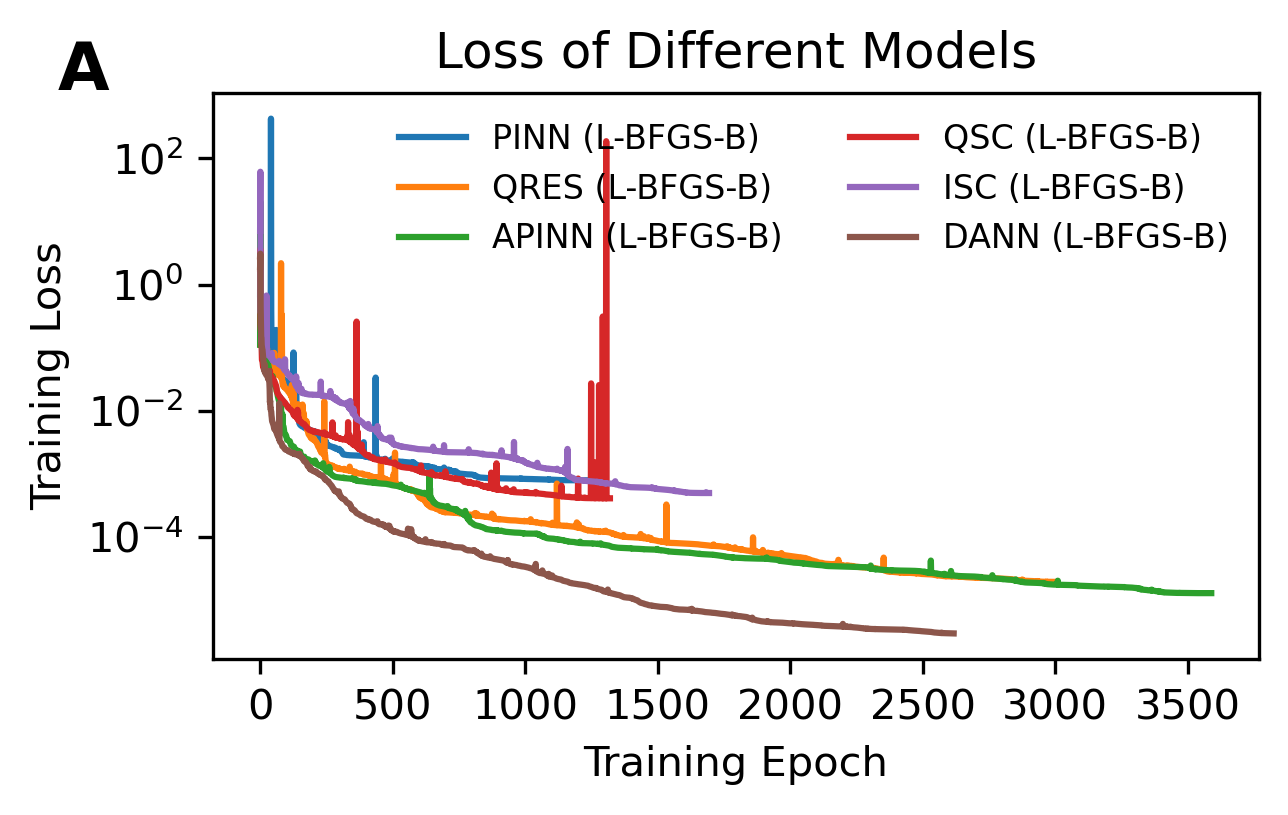}
	\end{minipage}
	\begin{minipage}{0.32\linewidth}
		\centering
		\includegraphics[width=\linewidth]{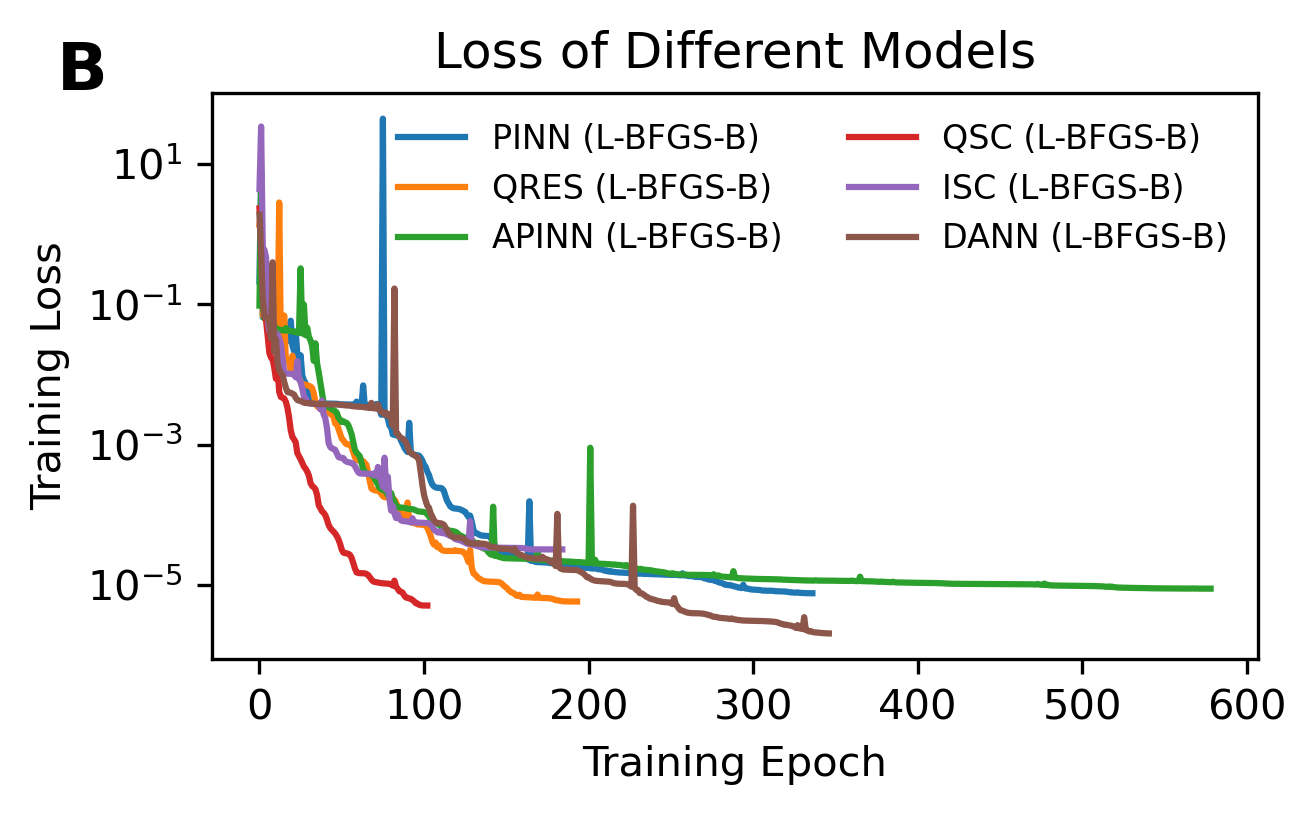}
	\end{minipage}
	\begin{minipage}{0.32\linewidth}
		\centering
		\includegraphics[width=\linewidth]{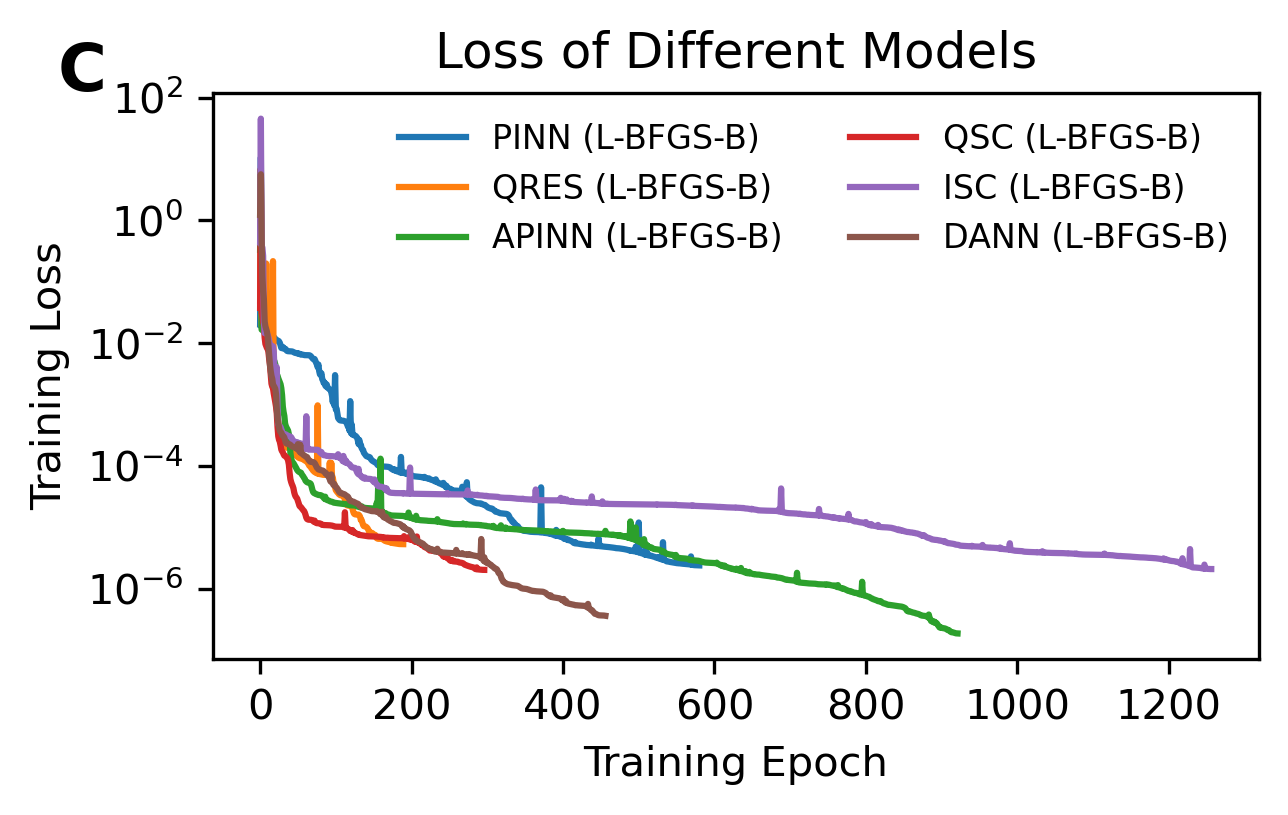}
	\end{minipage}
	\caption{Training loss curves for solving Example 2 based on global fitting (A), piecewise fitting (Subdomain 1) (B) and piecewise fitting (Subdomain 2) (C).}
	\vspace{-0.2cm}
	\label{delay2loss}
\end{figure}

\noindent the highest accuracy and PINN has the lowest accuracy. Additionally, according to Figure $\ref{delay2loss}$ (A), DANN showed the fastest convergence speed among these six models. In piecewise fitting, the approximate solutions accuracy obtained by these six models ranks from high to low as follows: DANN $>$ PINN $>$ QRES $>$ QSC $>$ APINN $>$ ISC, which means DANN has the highest accuracy and ISC has the lowest accuracy. As can be seen from Figure $\ref{delay2loss} $ (B) and (C), in terms of convergence speed, the top three from fast to slow are: QSC $>$  QRES $>$ DANN, all three converge faster than PINN. However, after approximately 300 steps, the loss functions of QSC and QRES stabilize, whereas the loss function of DANN continues to decrease.
\begin{table*}[t]
    \centering
    \caption{The relative $L^2$ errors of $u$ obtained by different models.}
    \begin{tabular}{ccccccc}
    \toprule
        \textbf{Model} & \textbf{PINN} & \textbf{APINN} & \textbf{QRES} & \textbf{ISC} & \textbf{QSC}  & \textbf{DANN} \\ \midrule
        \textbf{Global Fitting} & 9.07e-01 & 1.80e-01 &2.70e-01 &5.59e-01   &4.49e-01 & 1.65e-02 \\
        \textbf{Piecewise Fitting} & 9.78e-04 & 1.55e-03 &1.01e-03 &2.54e-03   &1.25e-03 & 7.53e-04 \\
    \bottomrule
    \end{tabular}
    \label{delay2table}
\end{table*}

\begin{figure}[H]
	\centering
	\subfigure{
			\begin{minipage}[t]{0.45\linewidth}
			\centering
			\includegraphics[width=\textwidth]{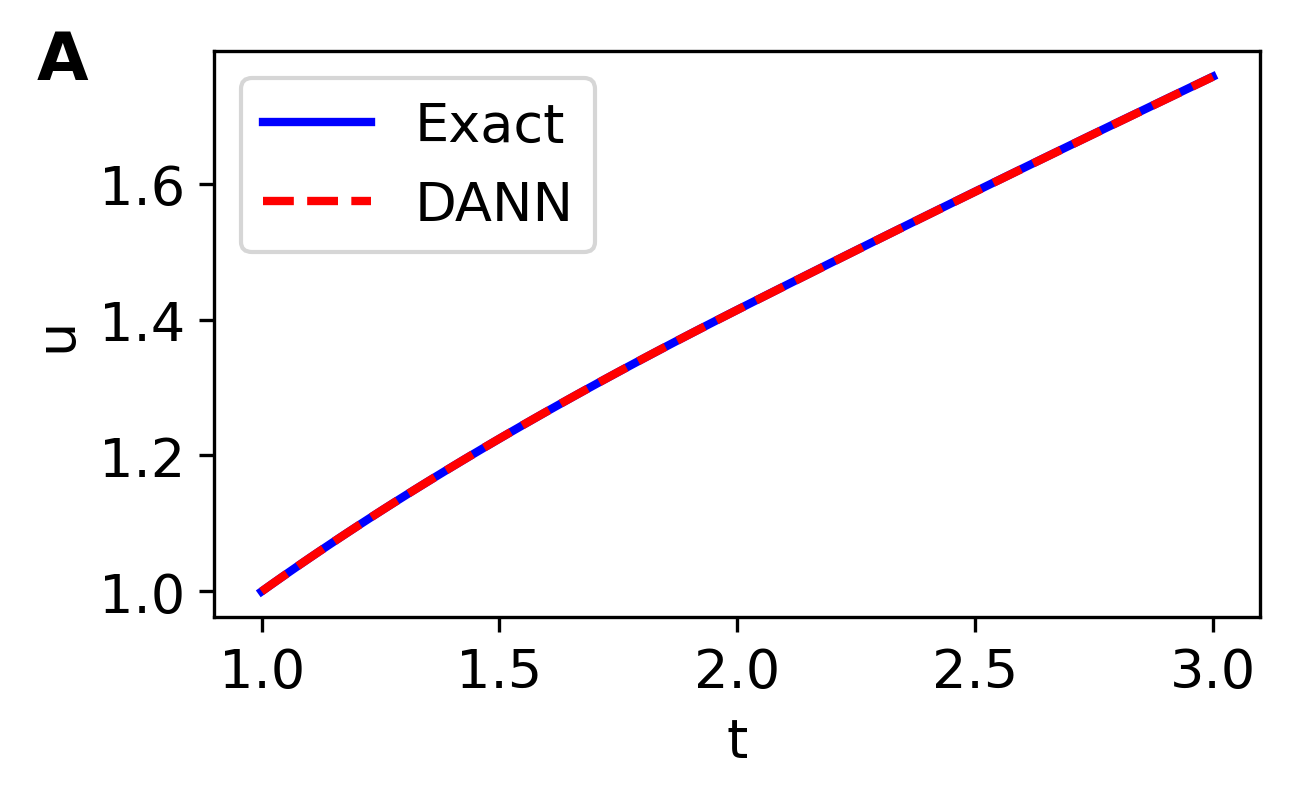}\\
			\vspace{0.02cm}
		\end{minipage}%
	}%
	\subfigure{
	\begin{minipage}[t]{0.45\linewidth}
			\centering
			\includegraphics[width=\textwidth]{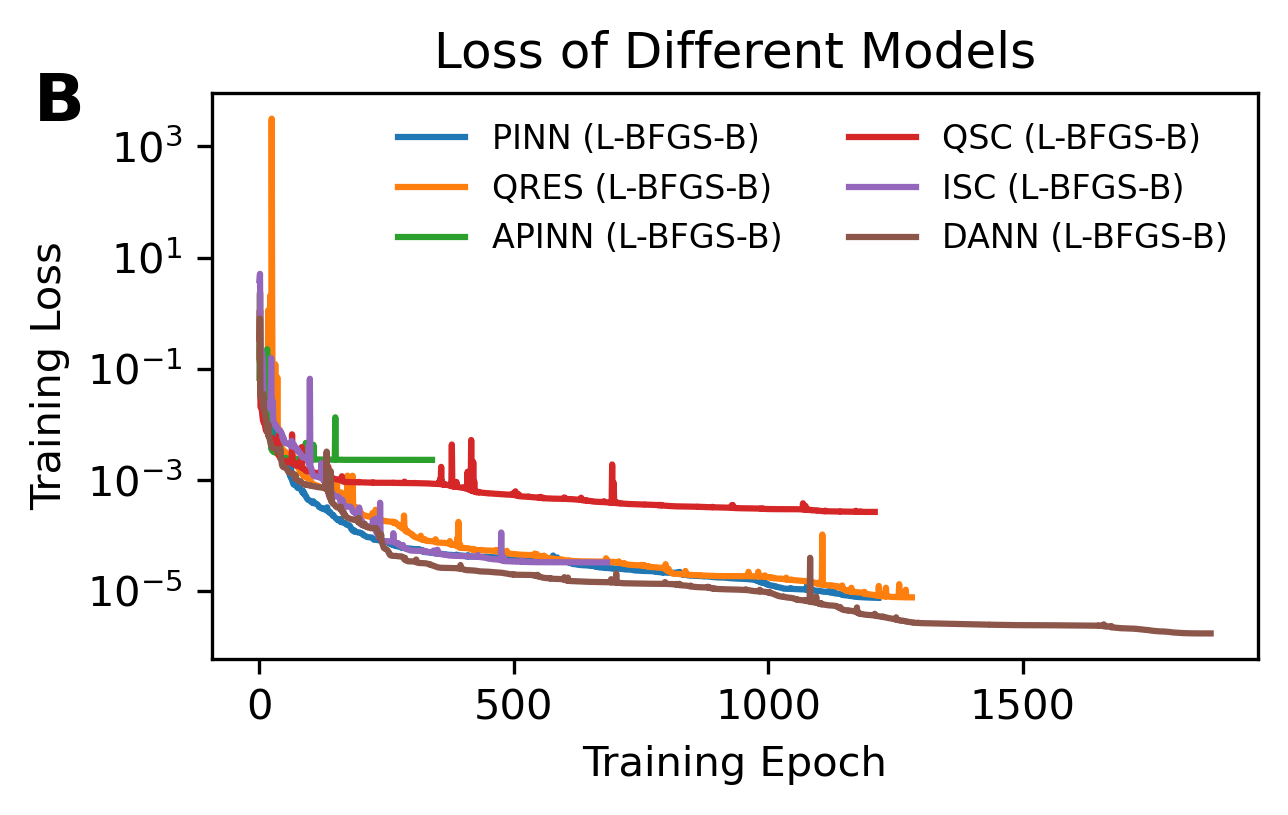}\\
			\vspace{0.02cm}
		\end{minipage}%
	}%

	\centering
	\caption{The approximate solution obtained by DANN and the exact solution (A). The loss curves for solving Example 3 (B).}
	\vspace{-0.2cm}
 \label{delay3}
\end{figure}

\begin{exm}
Consider the following DDE with state-dependent delay
\begin{eqnarray}
        \begin{cases}
         {u}^\prime(t) =\frac{1}{2}\frac{1}{\sqrt{t}}u(u(t)-\sqrt{2}+1), \quad 1\le t \le 3,\\
         u(t)=1,\quad t \le 1.\\
     \end{cases}
		\end{eqnarray}
	Its exact solution is
		\begin{eqnarray}
u(t) =
\begin{cases}
\sqrt{t}       ,\quad 1\le t \le 2, \\
\frac{t}{4}+\frac{1}{2}+(1-\frac{\sqrt{2}}{2}) \sqrt{t}  ,\quad 2\le t \le 3. \\
\end{cases}
\end{eqnarray}
\end{exm}

This problem involves two primary discontinuity points at $t = 0$ and $t = 1$. However, without prior knowledge of the exact solution, it is not possible to determine the exact locations of the primary discontinuity points in DDE with state-dependent delay. Therefore, we can only solve the problem using global fitting approach. 

\begin{table*}[t]
    \centering
    \caption{The relative $L^2$ errors of $u$ obtained by different models.}
    \label{tab:univ-compa}
    \begin{tabular}{ccccccc}
    \toprule
        \textbf{Model} & \textbf{PINN} & \textbf{APINN} & \textbf{QRES} & \textbf{ISC} & \textbf{QSC}  & \textbf{DANN} \\ \midrule
      \textbf{Global Fitting}&  1.56e-04 & 2.04e-02 &2.08e-04 &5.46e-04   &3.55e-03 & 9.18e-05 \\
    \bottomrule
    \end{tabular}
    \label{delay3table}
\end{table*}

To improve the fitting accuracy, we apply a special treatment to the expression of the approximate solution by setting $\hat{u}(t)=1+(t-1)\mathcal{N}(t;\theta)$, which guarantees the exact satisfaction for $t=1$. Table $\ref{delay3table}$ provides the relative $L^2$ errors obtained by different models. It can be observed that the approximate solution accuracy obtained by these six models ranks from high to low as follows: DANN $>$ PINN $>$ QRES $>$ ISC $>$ QSC $>$ APINN, with DANN achieves the highest accuracy among these models. The approximate solution obtained by DANN is presented in Figure $\ref{delay3}$ (A), demonstrating an excellent fit with the exact solution. 
Figure $\ref{delay3}$ (B) depicts the loss curves, illustrating that initially, PINN exhibits faster convergence than DANN. However, after approximately 250 iterations, DANN converges more rapidly and ultimately achieves a lower loss value compared to PINN.

\begin{exm}		
Consider the following $d$-dimensional DPDE with nonvanishing delay
\begin{eqnarray}
\begin{cases}
    u_t=\Delta u+du-u(t-1, \boldsymbol{x}),\quad (t, \boldsymbol{x}) \in[0,2] \times \Omega, \\
u(t, \boldsymbol{x})=\left(1-t+\frac{(t-1)^2}{2}\right) Q(\boldsymbol{x}),\quad (t, \boldsymbol{x}) \in[-1,0] \times \Omega, \\
u(t,\boldsymbol{x})|_{\partial \Omega}=0,\quad  t \in[0,2].
     \end{cases}
		\end{eqnarray}
Its exact solution is
\begin{eqnarray}
u(t, \boldsymbol{x})=\begin{cases}
\left(-\frac{1}{3}-t+\frac{(t-1)^2}{2}-\frac{(t-2)^3}{6}\right) Q(\boldsymbol{x}),\quad (t, \boldsymbol{x}) \in[0,1] \times \Omega, \\
\left(-\frac{7}{3}+\frac{t}{3}+\frac{(t-1)^2}{2}-\frac{(t-2)^3}{6}+\frac{(t-3)^4}{24}\right) Q(\boldsymbol{x}),\quad (t, \boldsymbol{x}) \in[1,2] \times \Omega,
\end{cases}
		\end{eqnarray}
where $\boldsymbol{x}=(x_1,x_2,\ldots,x_{d})$, $\Omega=[0,\pi]^{d}$ and $Q(\boldsymbol{x})=\prod \limits_{i=1}^{d} \sin(x_i)$.
\end{exm}

This problem involves two primary discontinuity points at $t=0$ and $t=1$. We represent its approximate solution as
$$\hat{u}(t, \boldsymbol{x})=\prod \limits_{i=1}^{d} x_i \prod \limits_{i=1}^{d} \sin(x_i-\pi)\mathcal{N}(t,\boldsymbol{x};\theta),$$
where $\mathcal{N}(t,\boldsymbol{x};\theta)$ represents a neural network with parameter $\theta$.

\begin{table*}[t]
    \centering
    \caption{The relative $L^2$ errors of $u$ obtained by different models.}
    \scalebox{0.87}{
    \begin{tabular}{c|ccccccc}
    \toprule
        \textbf{Dimension} &\textbf{Model} & \textbf{PINN} & \textbf{APINN} & \textbf{QRES} & \textbf{ISC} & \textbf{QSC}  & \textbf{DANN} \\ \midrule
        \multirow{2}{*}{d=3}&\textbf{Global Fitting} & 3.61e-02 & 1.24e+00 &2.02e-02 &3.49e-02   &5.36e-02 & 1.30e-02 \\
        
        \multirow{2}{*}{ }&\textbf{Piecewise Fitting} & 3.38e-04 & 5.90e-01 &7.59e-05 &3.39e-04   & 6.17e-05& 5.37e-05 \\
        \midrule
        \multirow{2}{*}{d=8}&\textbf{Global Fitting} & 1.35e-01 & 1.04e+00 &1.83e-01 &1.34e-01   &1.81e-01 & 1.73e-01 \\
        
        \multirow{2}{*}{ }&\textbf{Piecewise Fitting} & 3.89e-02 & 1.00e+00 &5.56e-03 &4.42e-02   & 4.90e-03& 3.54e-03 \\

    \bottomrule
    \end{tabular}}
\label{delay5table}
\end{table*}

For $d=3$ and $d=8$, we employ PINN, APINN, QRES, ISC, QSC, and DANN models based on global and piecewise fitting to solve the problem. In piecewise fitting, the domain $(t, \boldsymbol{x}) \in[0,2] \times \Omega$ is divided into $(t, \boldsymbol{x}) \in$ $[0,1] \times\Omega$ (Subdomain 1) and $(t, \boldsymbol{x}) \in[1,2] \times\Omega$ (Subdomain 2). Table $\ref{delay5table}$ provides the relative $L^2$ errors between the exact solution and the approximate solutions obtained by these six models when $d=3$ and $d=8$. 
The results in Table $\ref{delay5table}$ indicate that piecewise fitting provides superior fitting performance compared to global fitting, and the approximate solutions obtained by DANN, when utilizing piecewise fitting, demonstrate the highest accuracy among all models evaluated.
Figures $\ref{delay5wucha}$ and $\ref{delay6wucha}$ display the absolute errors for six models based on piecewise fitting at $t=0, 1.0, 2.0$ when $d=3$ and $d=8$, respectively. It is evident that DANN significantly reduces absolute errors.
 In summary, piecewise fitting significantly improves the accuracy of the approximate solution compared to global fitting, and DANN based on piecewise fitting consistently outperforms other mentioned models in terms of accuracy. Figure $\ref{delay5loss}$ presents the loss curves of di-
 \begin{figure}[H]
	\centering
	\begin{minipage}{0.27\linewidth}
		\centering
		\includegraphics[width=\linewidth]{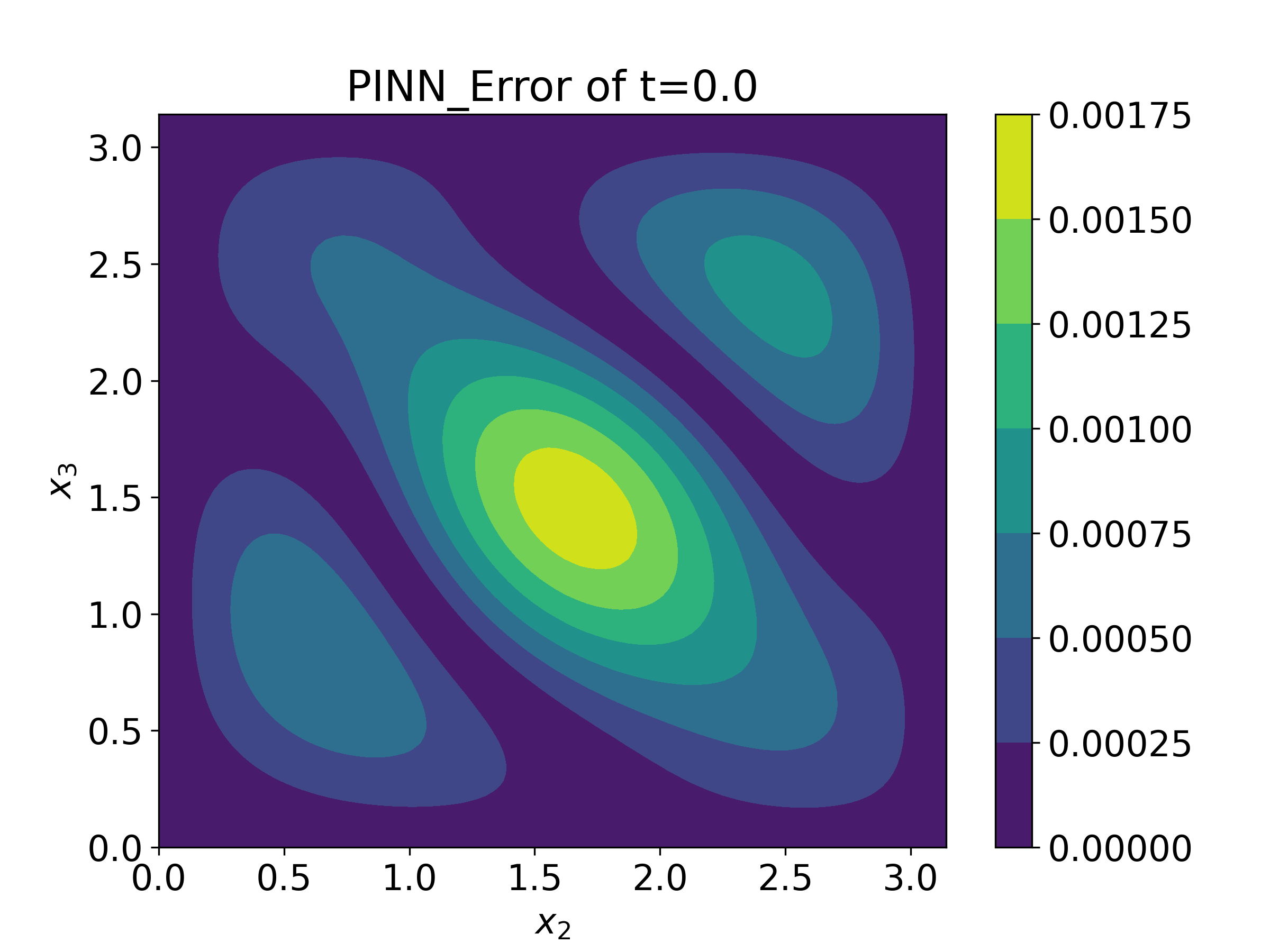}
		
		\label{chutian3}
	\end{minipage}
	\begin{minipage}{0.27\linewidth}
		\centering
		\includegraphics[width=\linewidth]{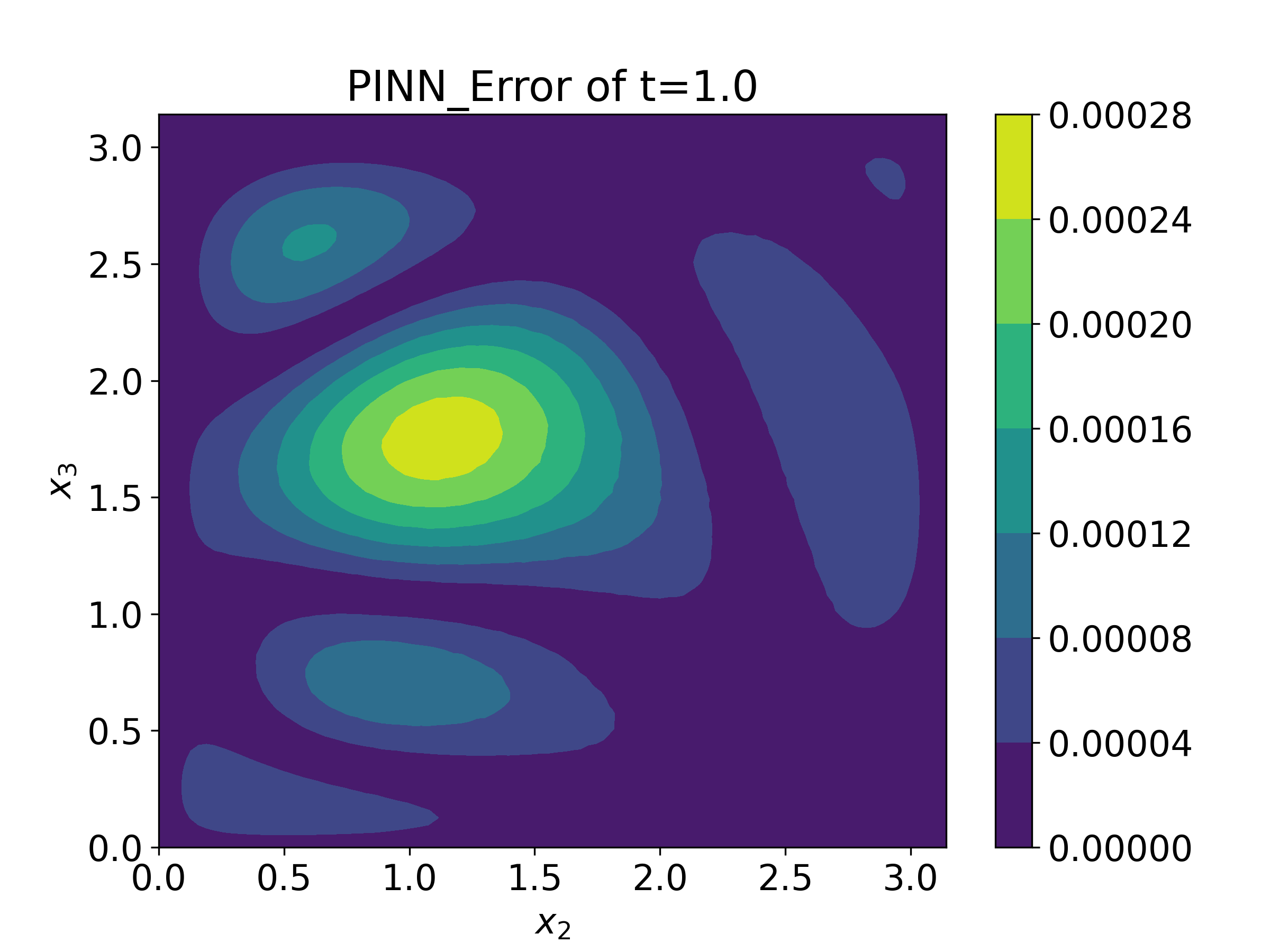}
		
		\label{chutian4}
	\end{minipage}
 \begin{minipage}{0.27\linewidth}
		\centering
		\includegraphics[width=\linewidth]{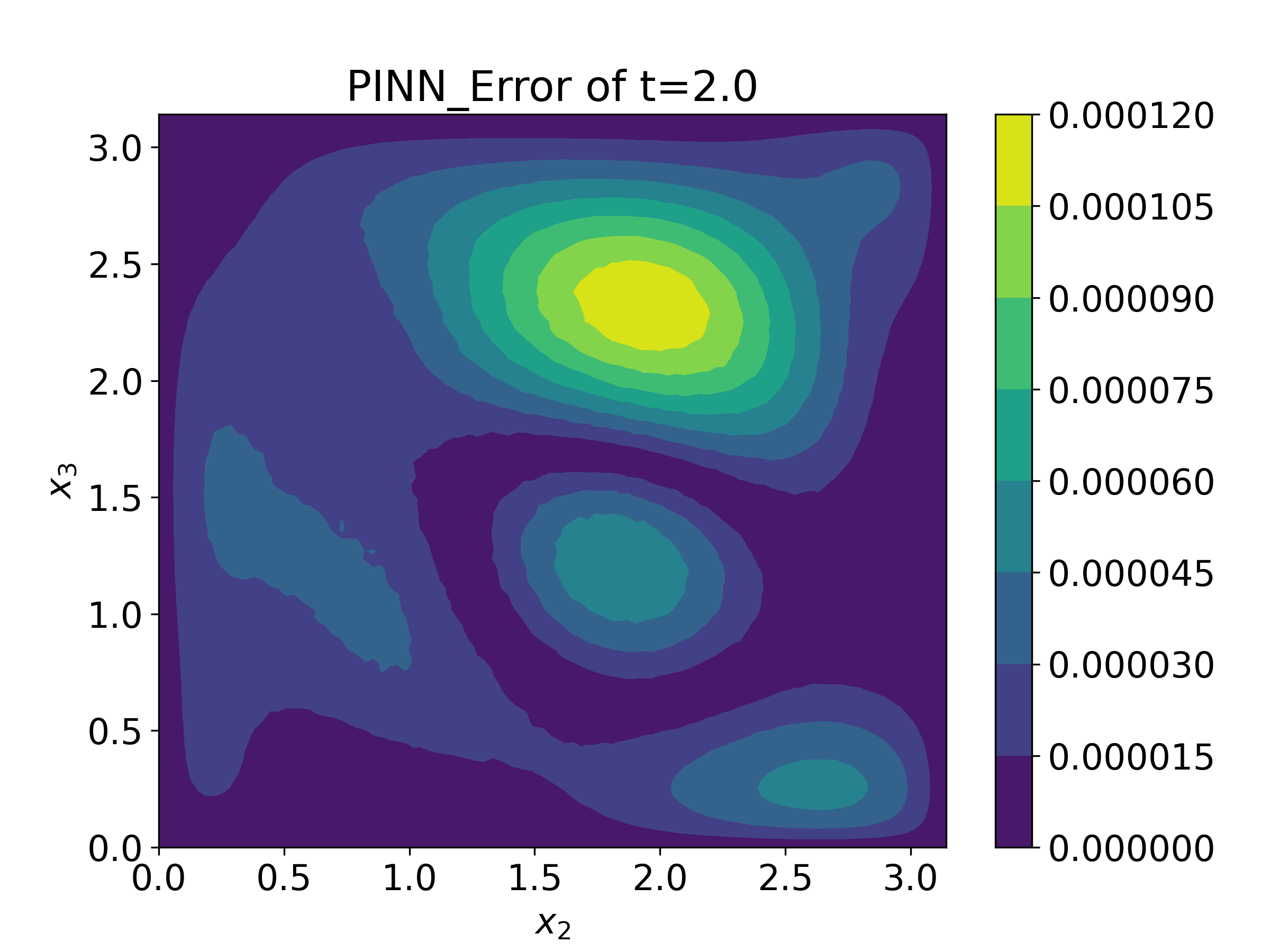}
		
		\label{chutian2}
	\end{minipage}

 \begin{minipage}{0.27\linewidth}
		\centering
		\includegraphics[width=\linewidth]{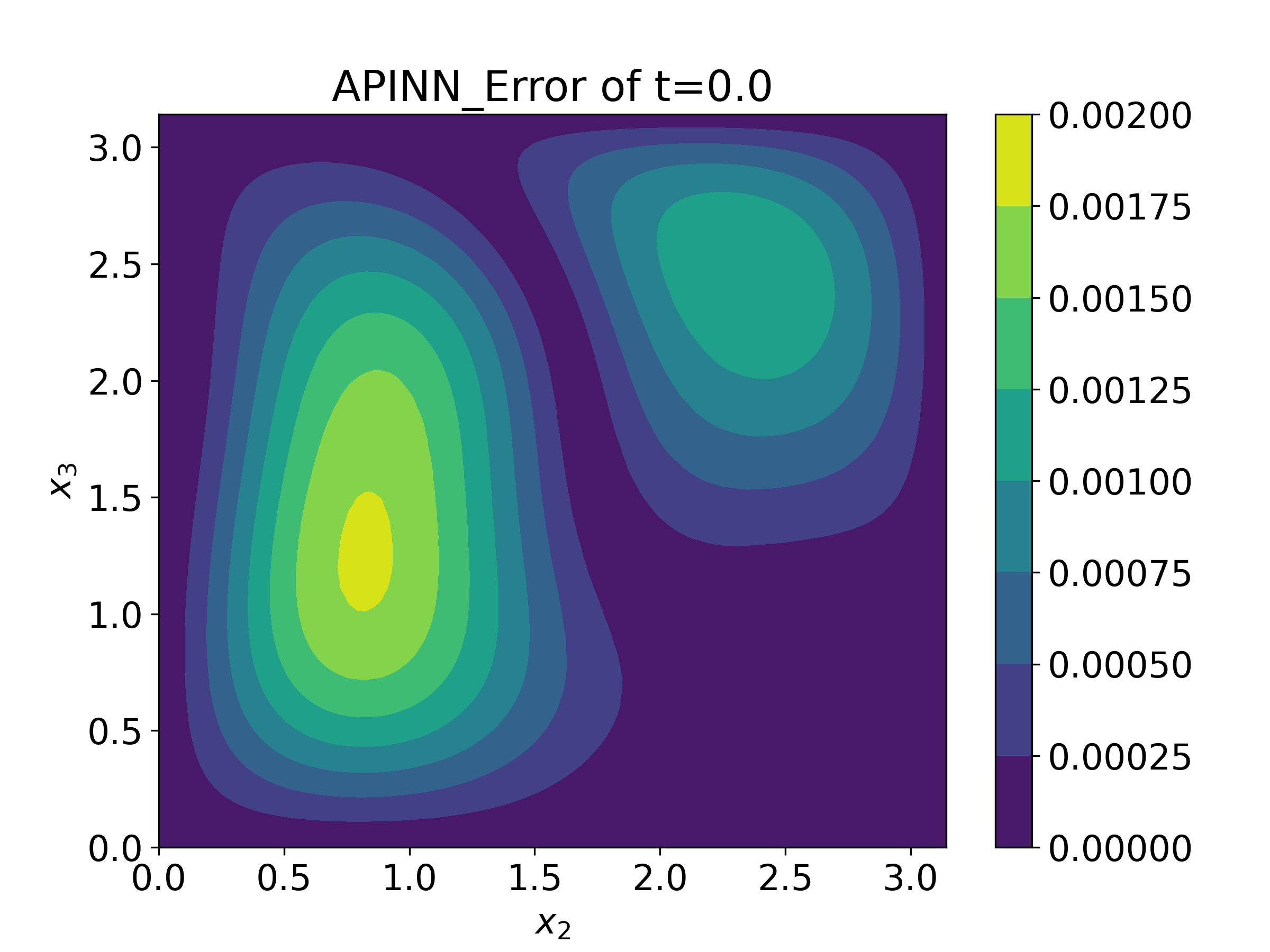}
		
		\label{chutian3}
	\end{minipage}
	\begin{minipage}{0.27\linewidth}
		\centering
		\includegraphics[width=\linewidth]{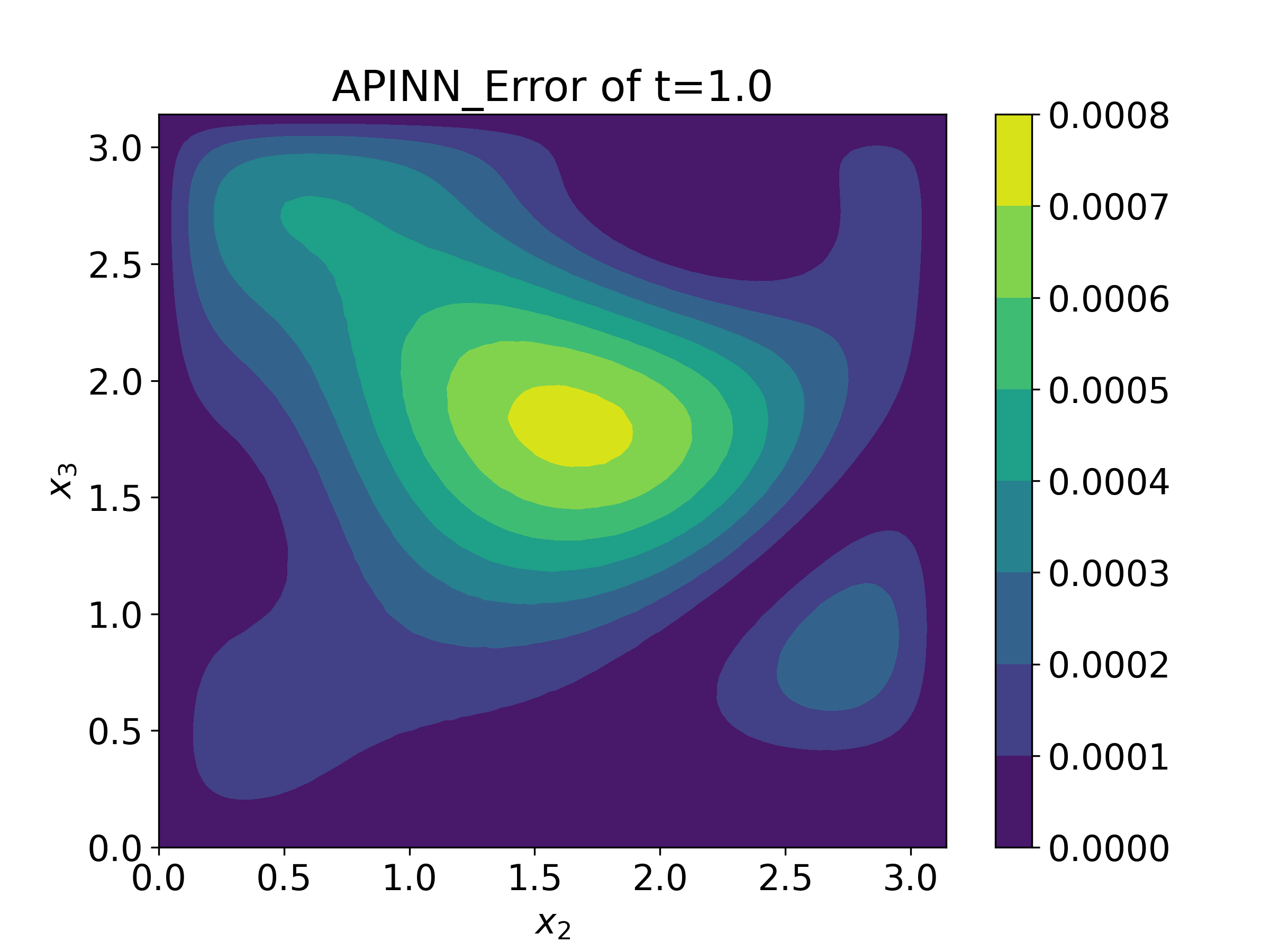}
		
		\label{chutian4}
	\end{minipage}
 \begin{minipage}{0.27\linewidth}
		\centering
		\includegraphics[width=\linewidth]{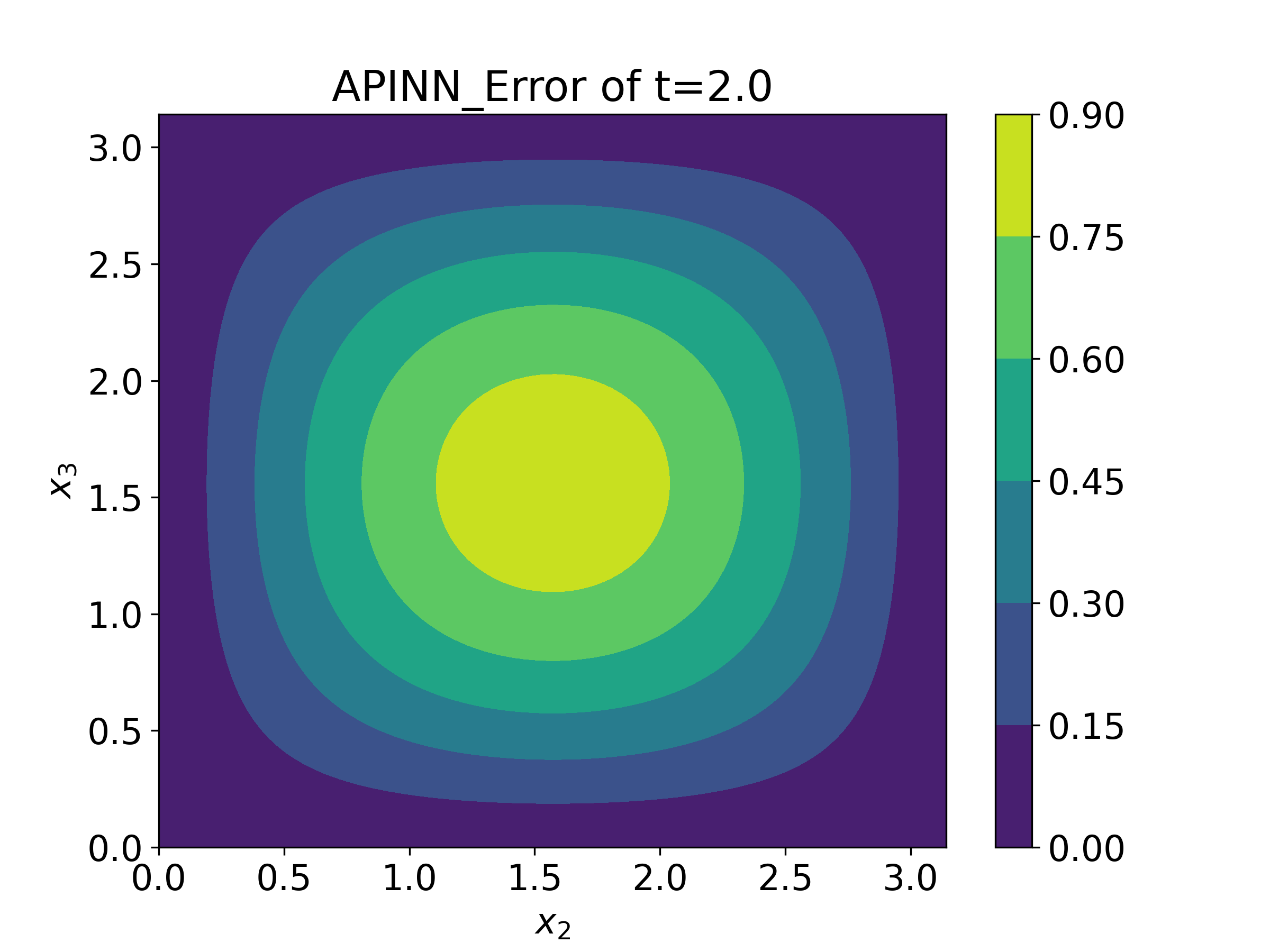}
		
		\label{chutian2}
	\end{minipage}

 \begin{minipage}{0.27\linewidth}
		\centering
		\includegraphics[width=\linewidth]{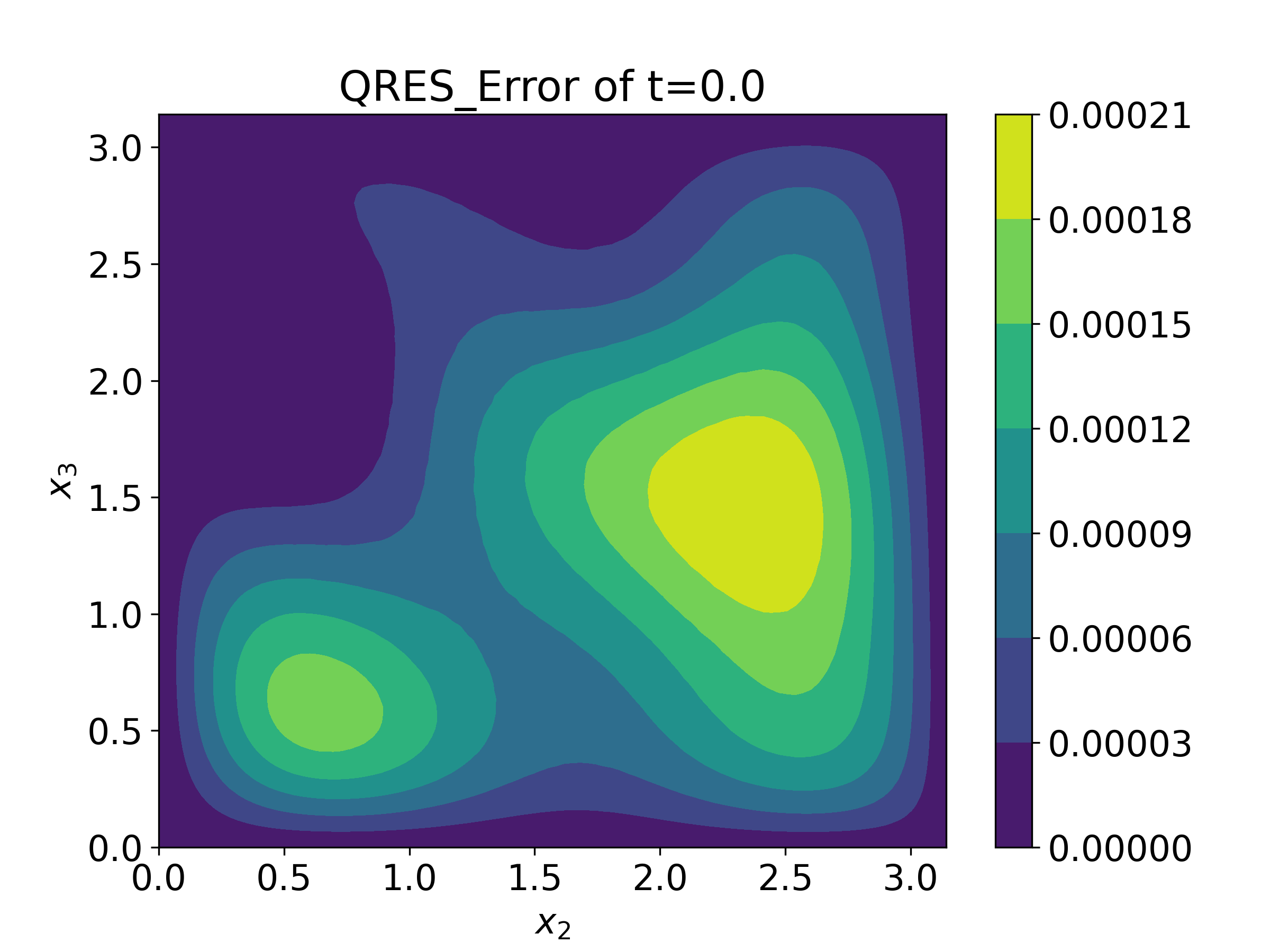}
		
		\label{chutian3}
	\end{minipage}
	\begin{minipage}{0.27\linewidth}
		\centering
		\includegraphics[width=\linewidth]{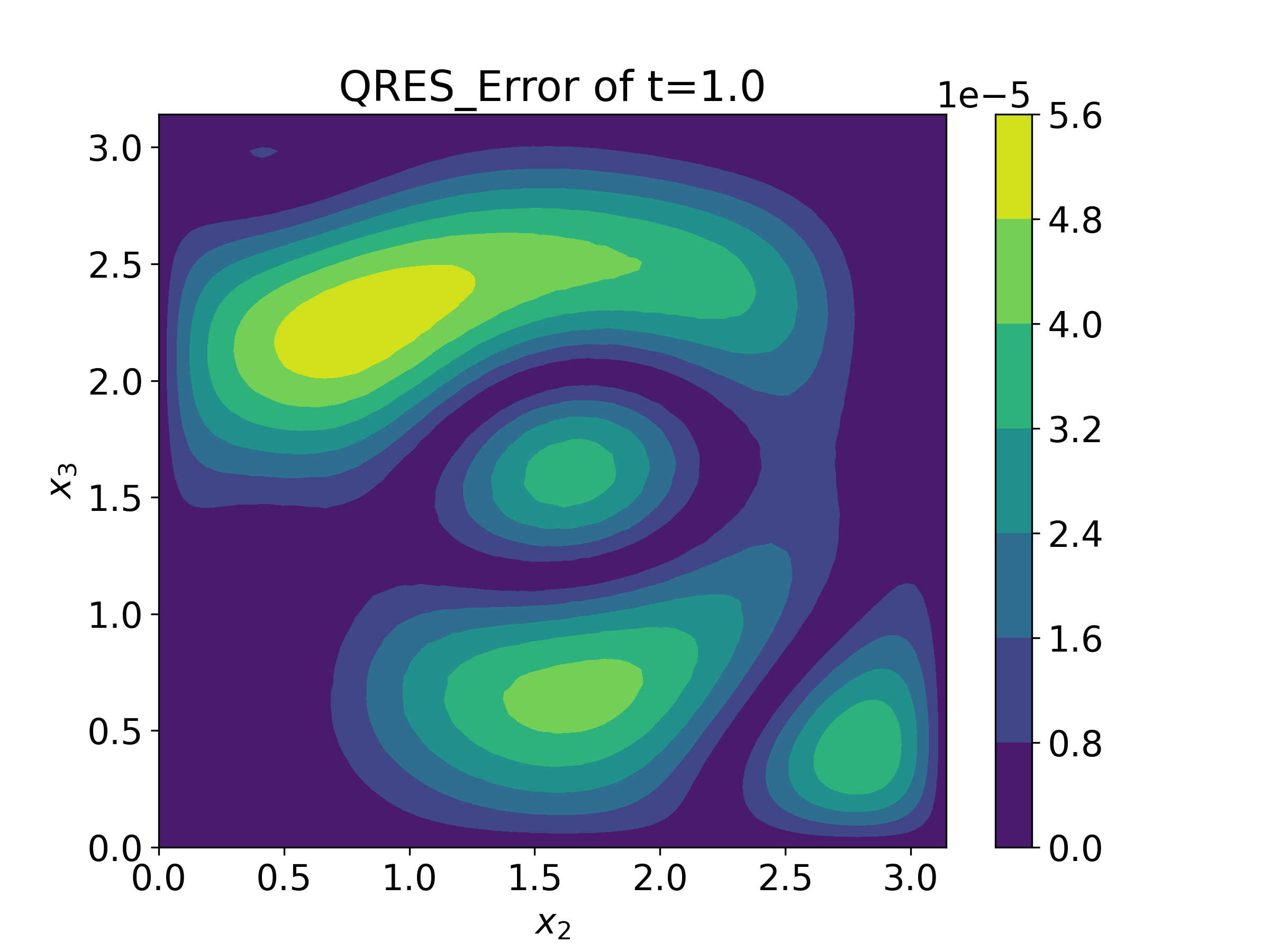}
		
		\label{chutian4}
	\end{minipage}
 \begin{minipage}{0.27\linewidth}
		\centering
		\includegraphics[width=\linewidth]{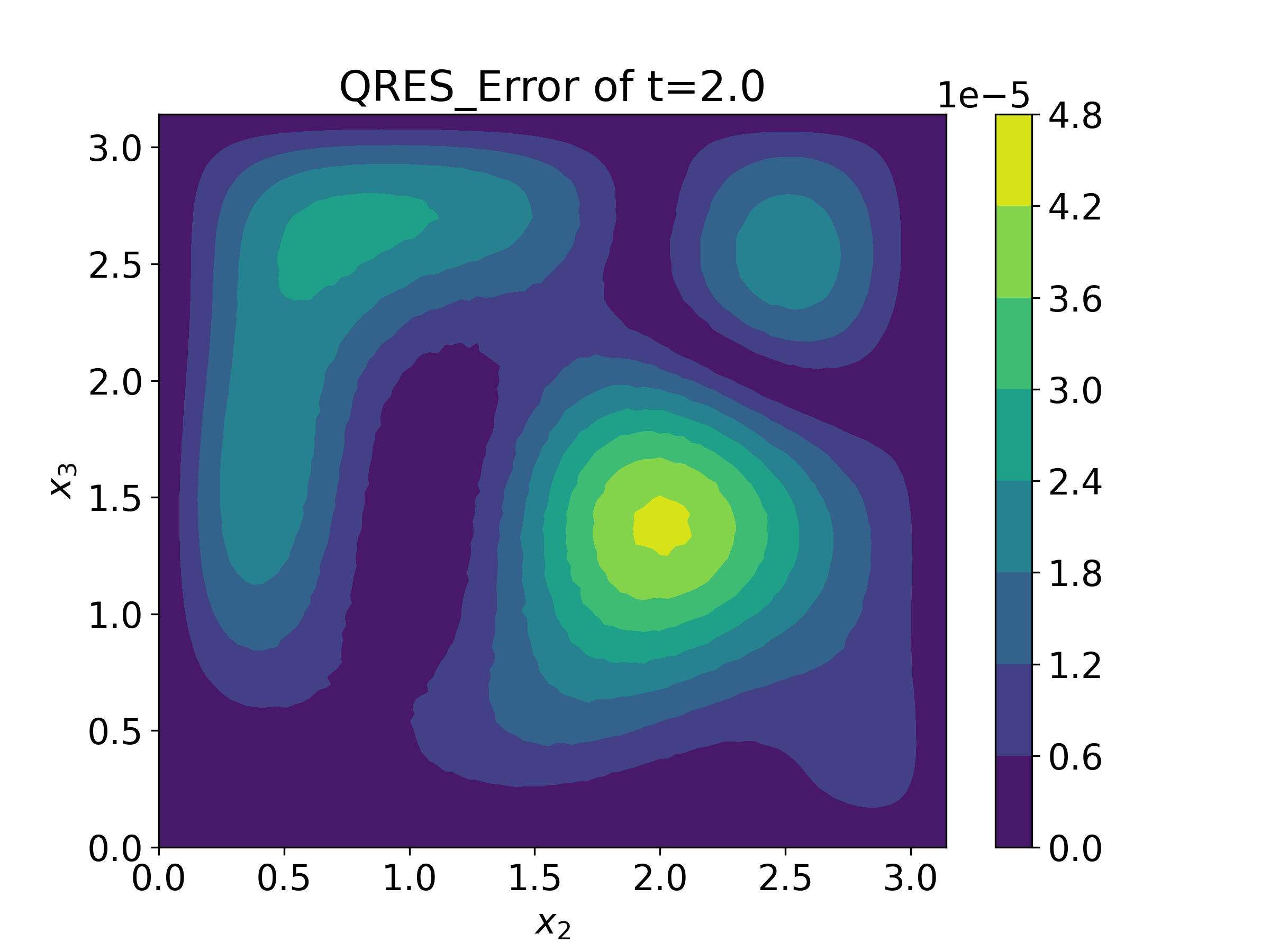}
		
		\label{chutian2}
	\end{minipage}

 \begin{minipage}{0.27\linewidth}
		\centering
		\includegraphics[width=\linewidth]{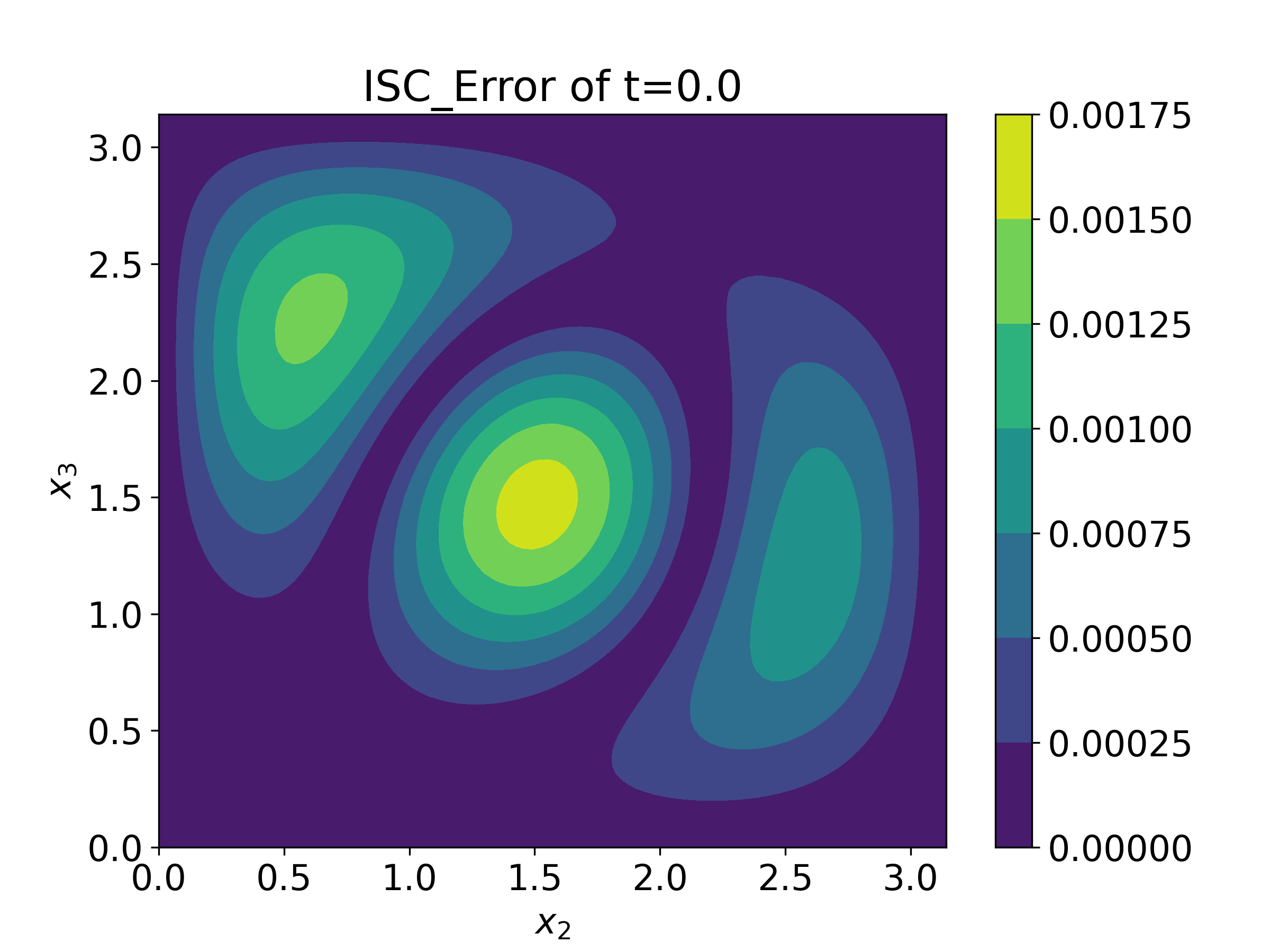}
		
		\label{chutian3}
	\end{minipage}
	\begin{minipage}{0.27\linewidth}
		\centering
		\includegraphics[width=\linewidth]{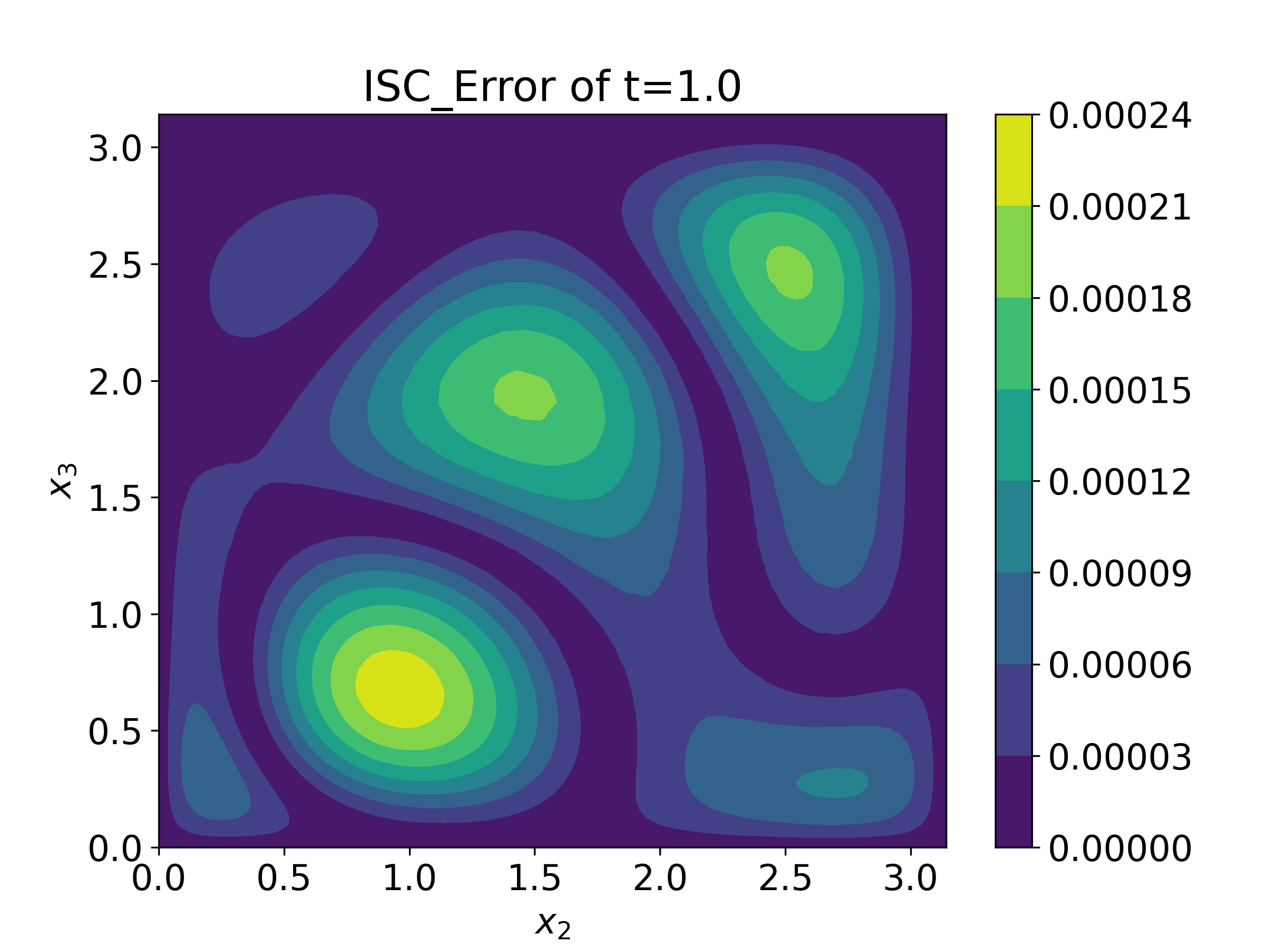}
		
		\label{chutian4}
	\end{minipage}
 \begin{minipage}{0.27\linewidth}
		\centering
		\includegraphics[width=\linewidth]{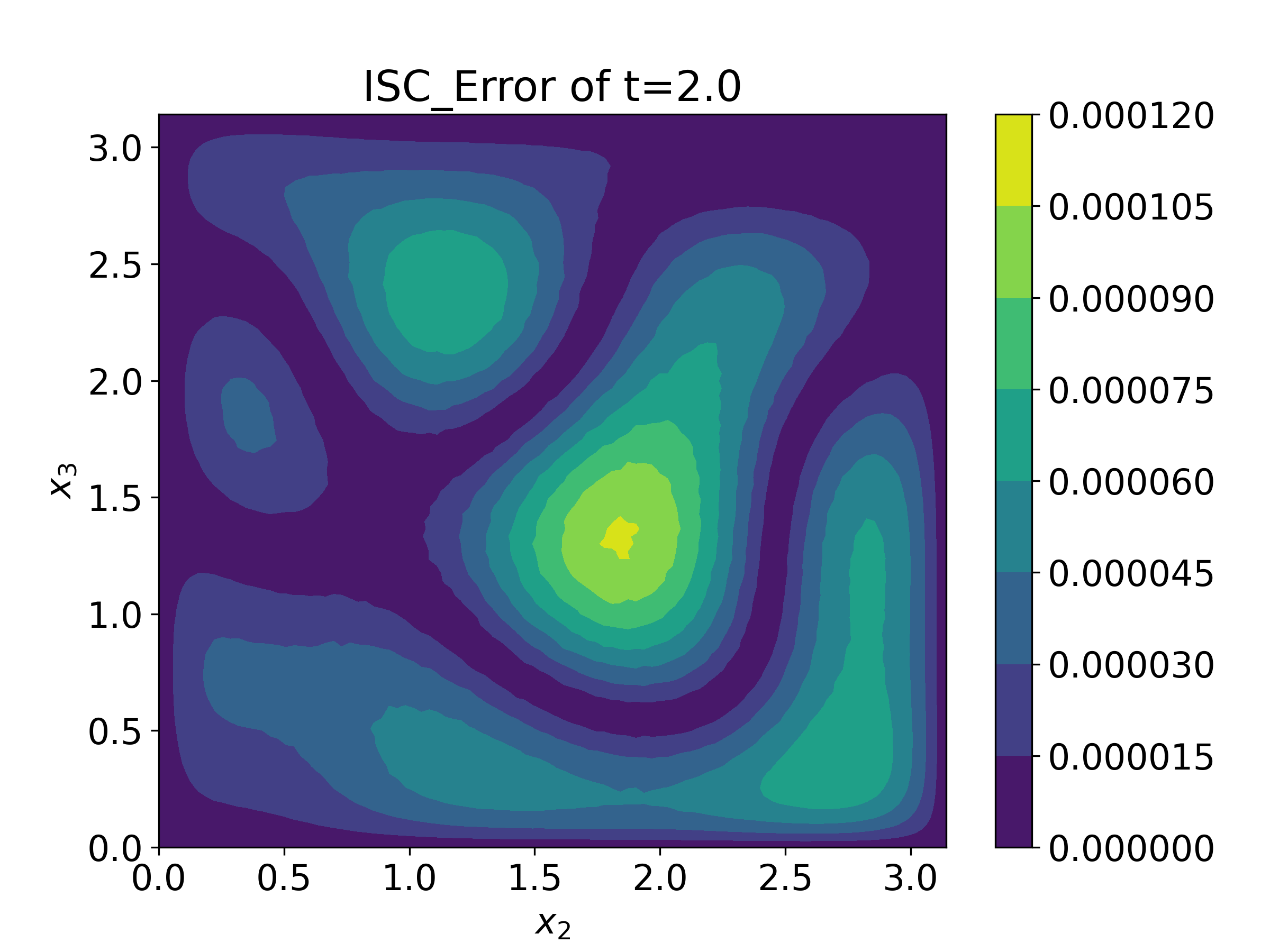}
		
		\label{chutian2}
	\end{minipage}

 \begin{minipage}{0.27\linewidth}
		\centering
		\includegraphics[width=\linewidth]{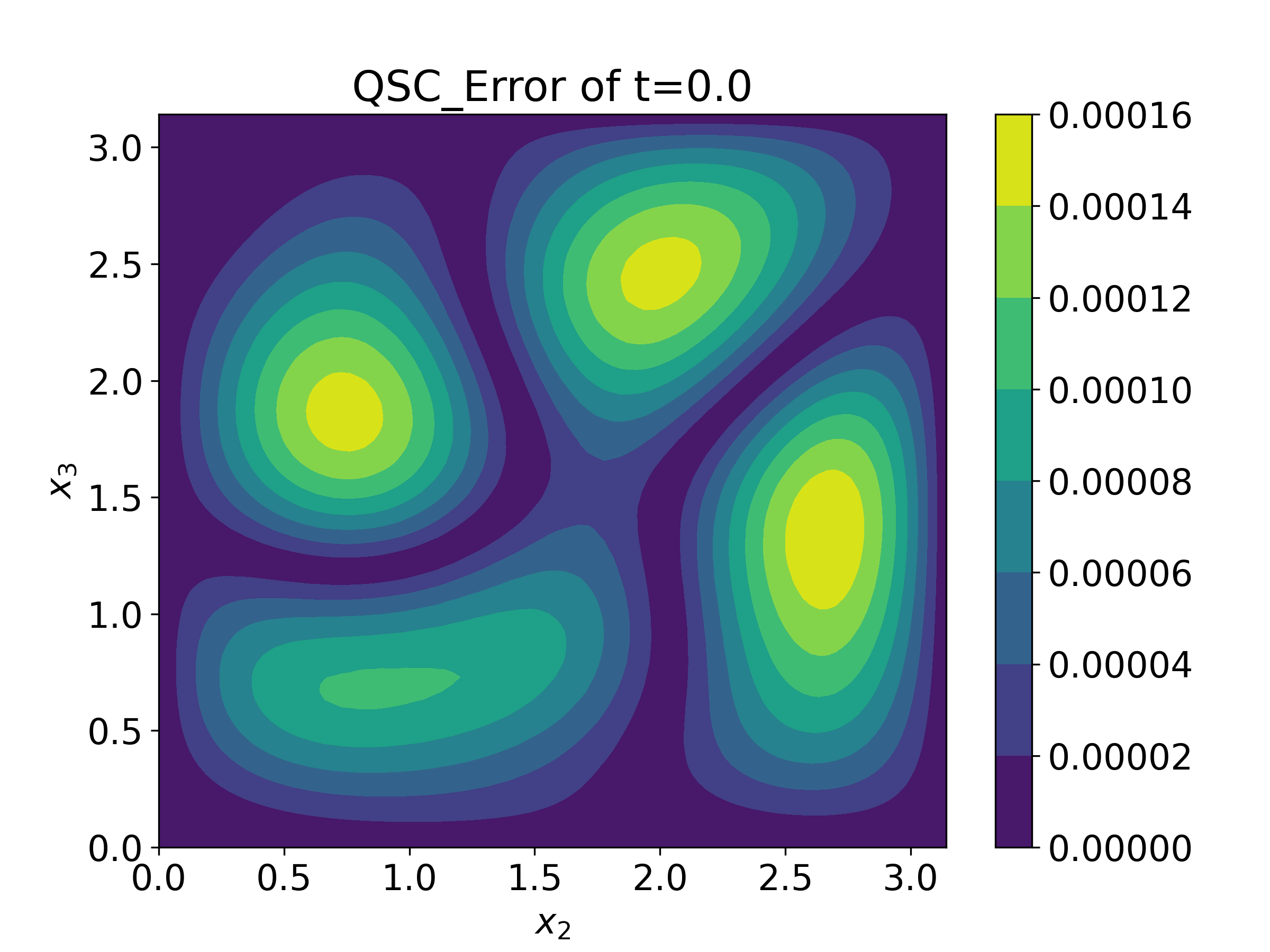}
		
		\label{chutian3}
	\end{minipage}
	\begin{minipage}{0.27\linewidth}
		\centering
		\includegraphics[width=\linewidth]{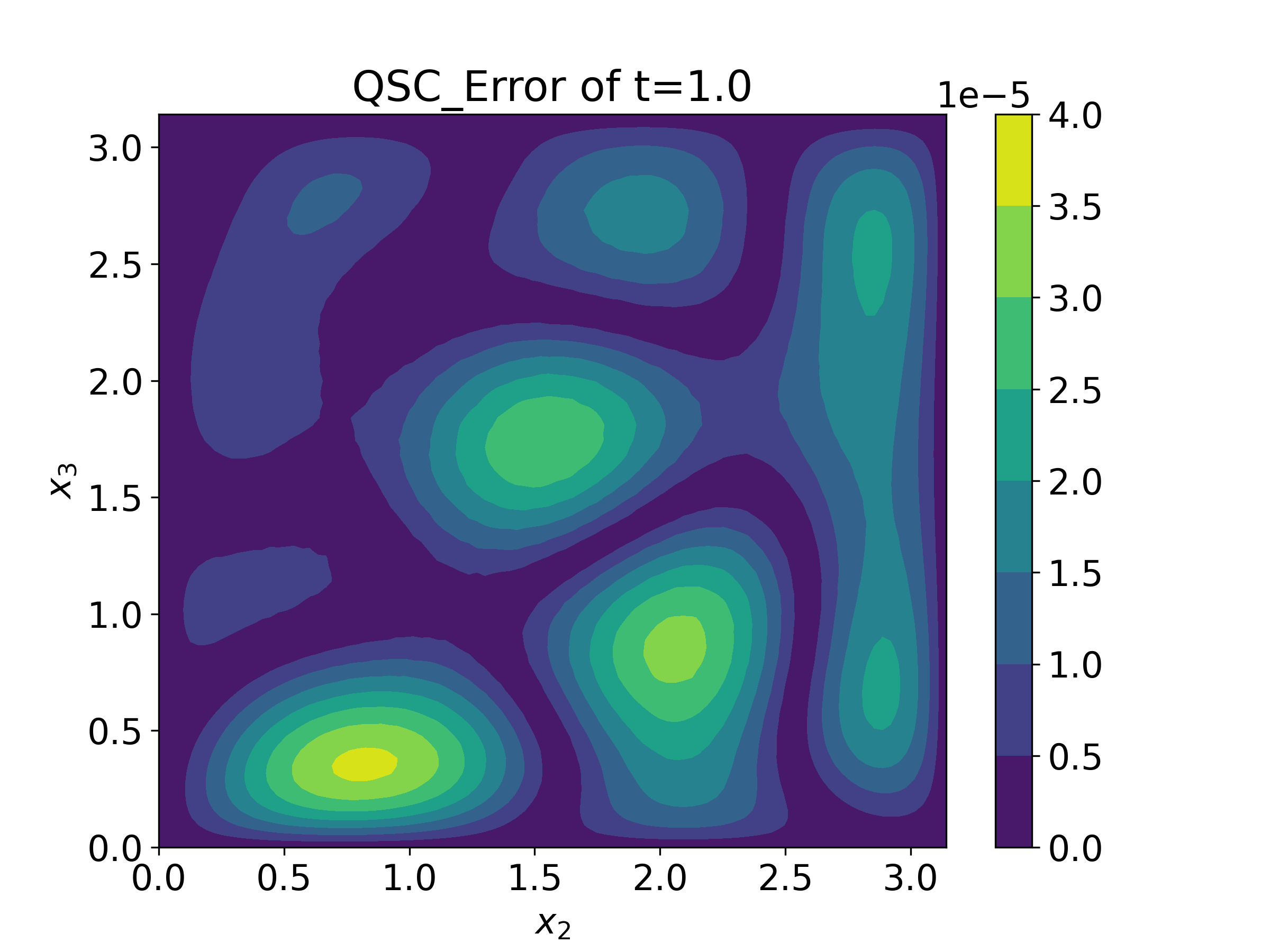}
		
		\label{chutian4}
	\end{minipage}
 \begin{minipage}{0.27\linewidth}
		\centering
		\includegraphics[width=\linewidth]{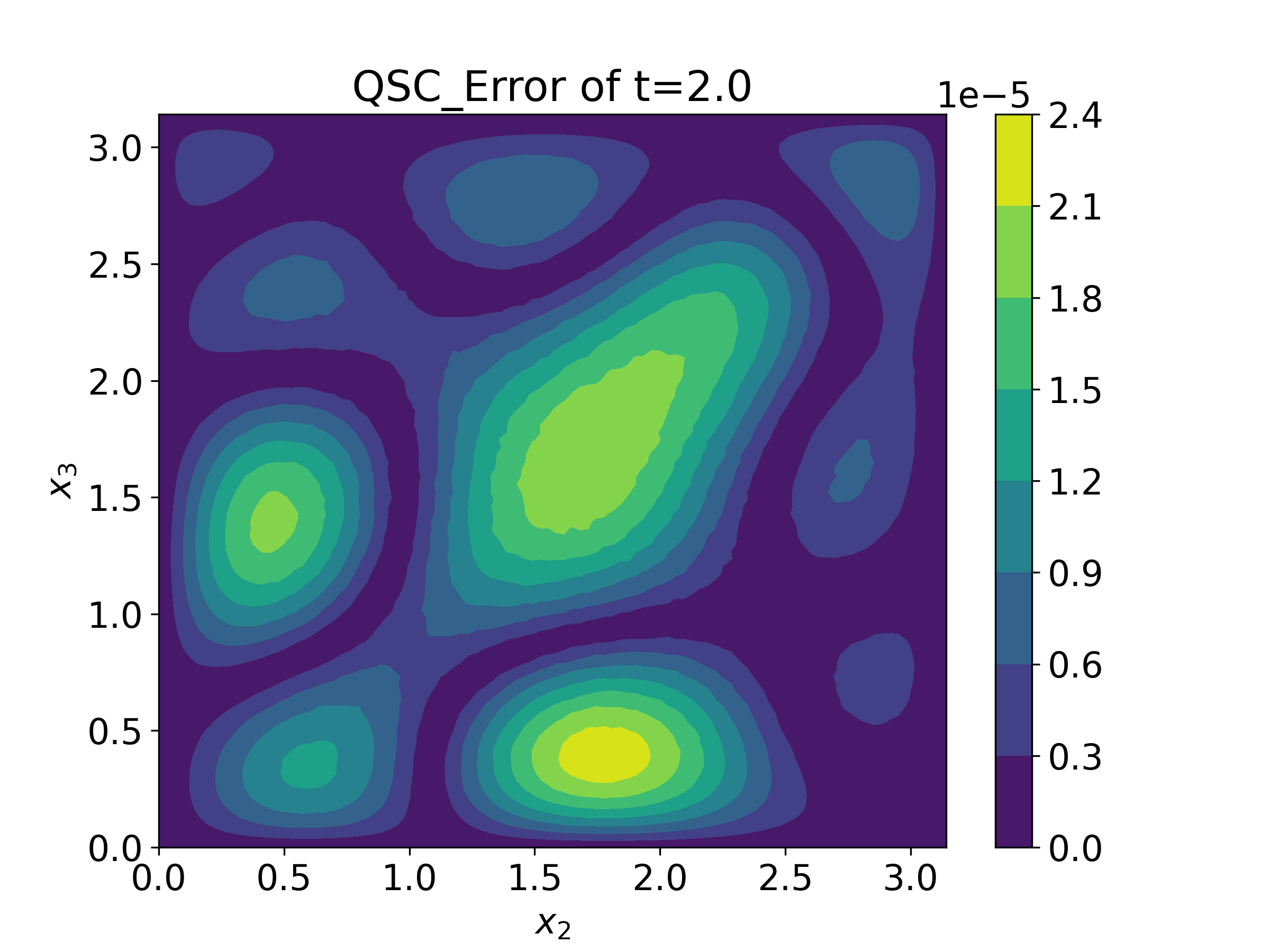}
		
		\label{chutian2}
	\end{minipage}

 \begin{minipage}{0.27\linewidth}
		\centering
		\includegraphics[width=\linewidth]{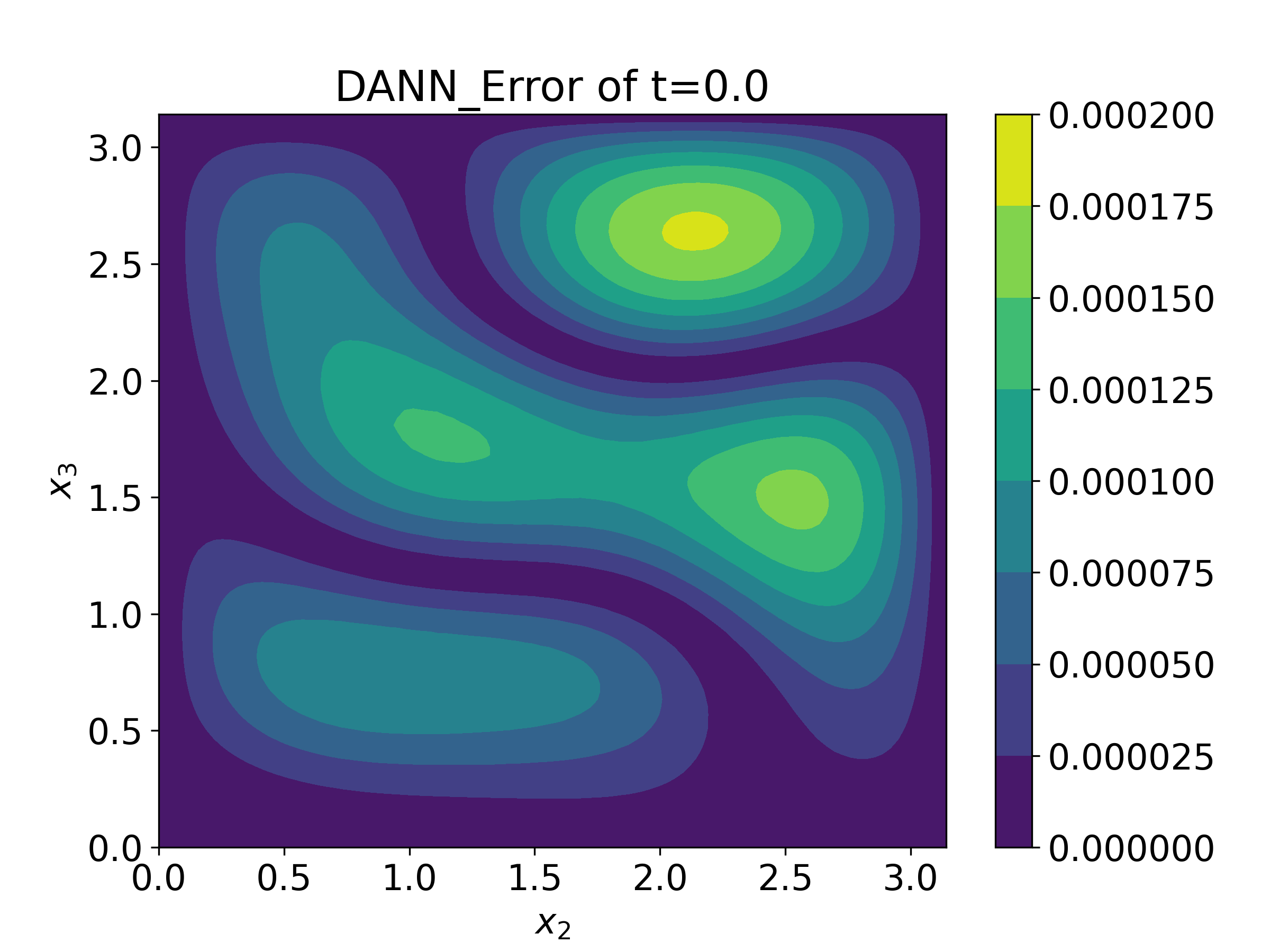}
		
		\label{chutian3}
	\end{minipage}
	\begin{minipage}{0.27\linewidth}
		\centering
		\includegraphics[width=\linewidth]{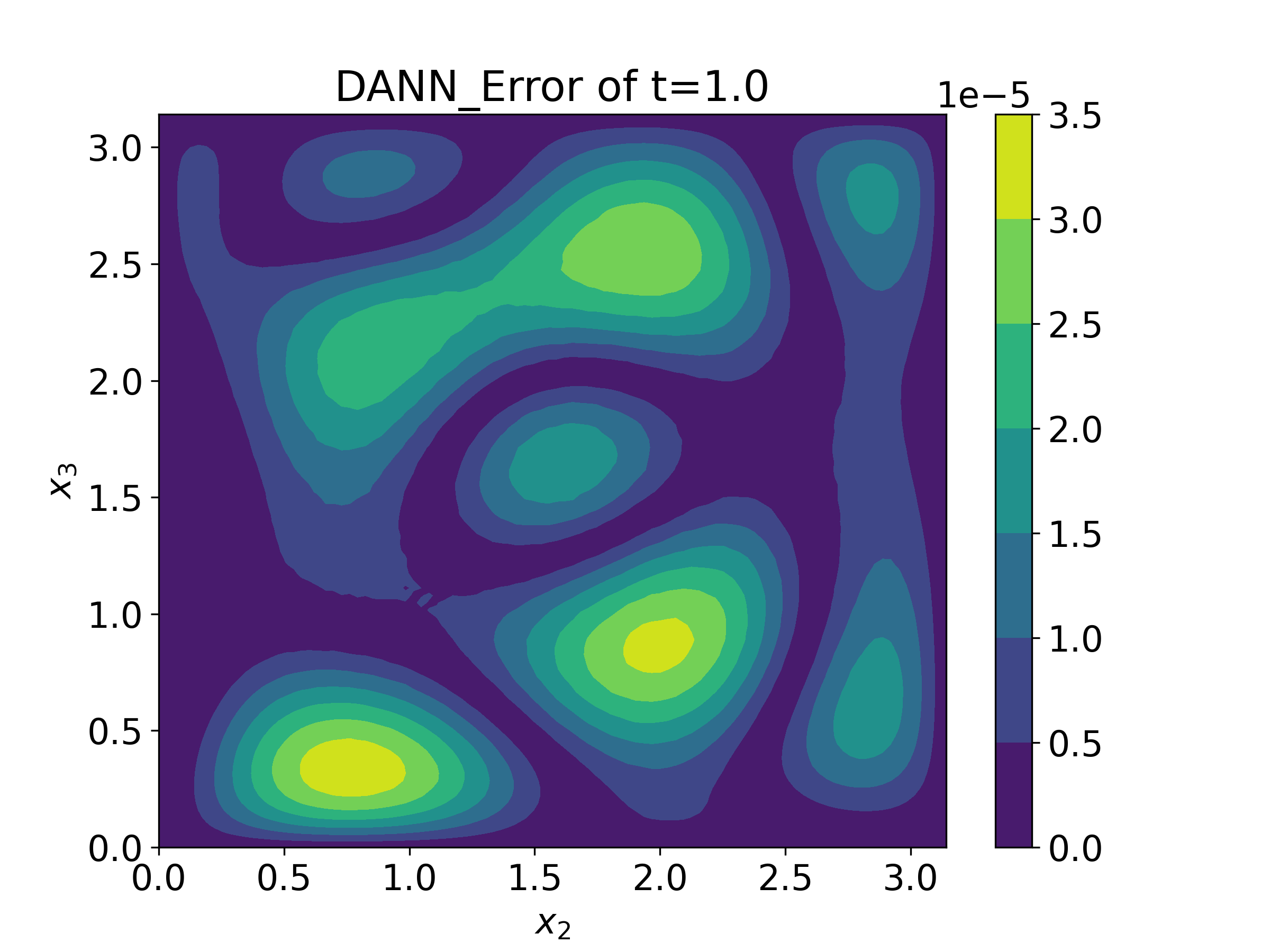}
		
		\label{chutian4}
	\end{minipage}
 \begin{minipage}{0.27\linewidth}
		\centering
		\includegraphics[width=\linewidth]{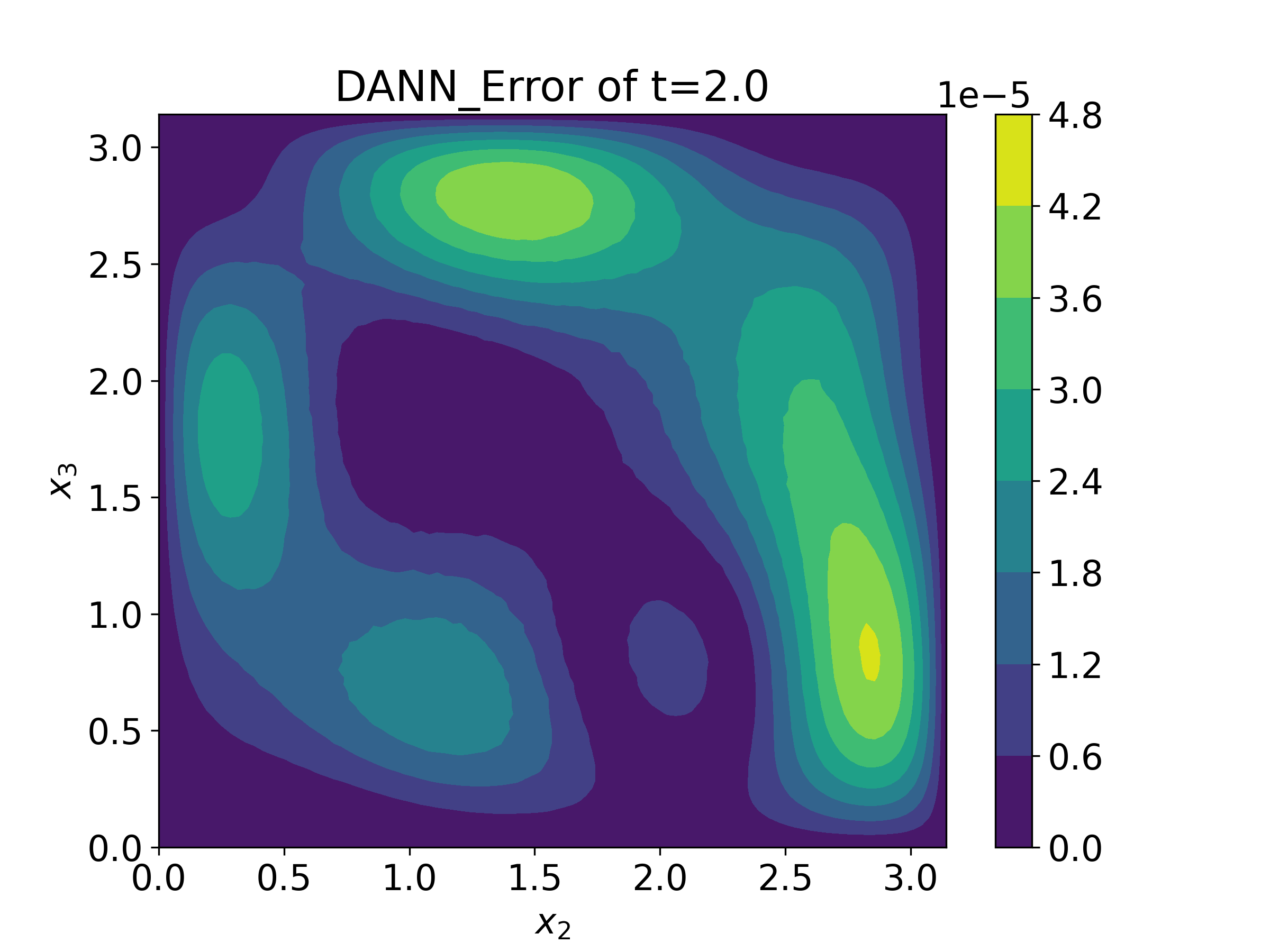}
		
		\label{chutian2}
	\end{minipage}
\caption{The absolute errors obtained by different models based on piecewise fitting at $t=0.0, 1.0, 2.0$ when $d=3$.}
\label{delay5wucha}
\end{figure}
\begin{figure}[H]
	\centering
 \begin{minipage}{0.27\linewidth}
		\centering
		\includegraphics[width=\linewidth]{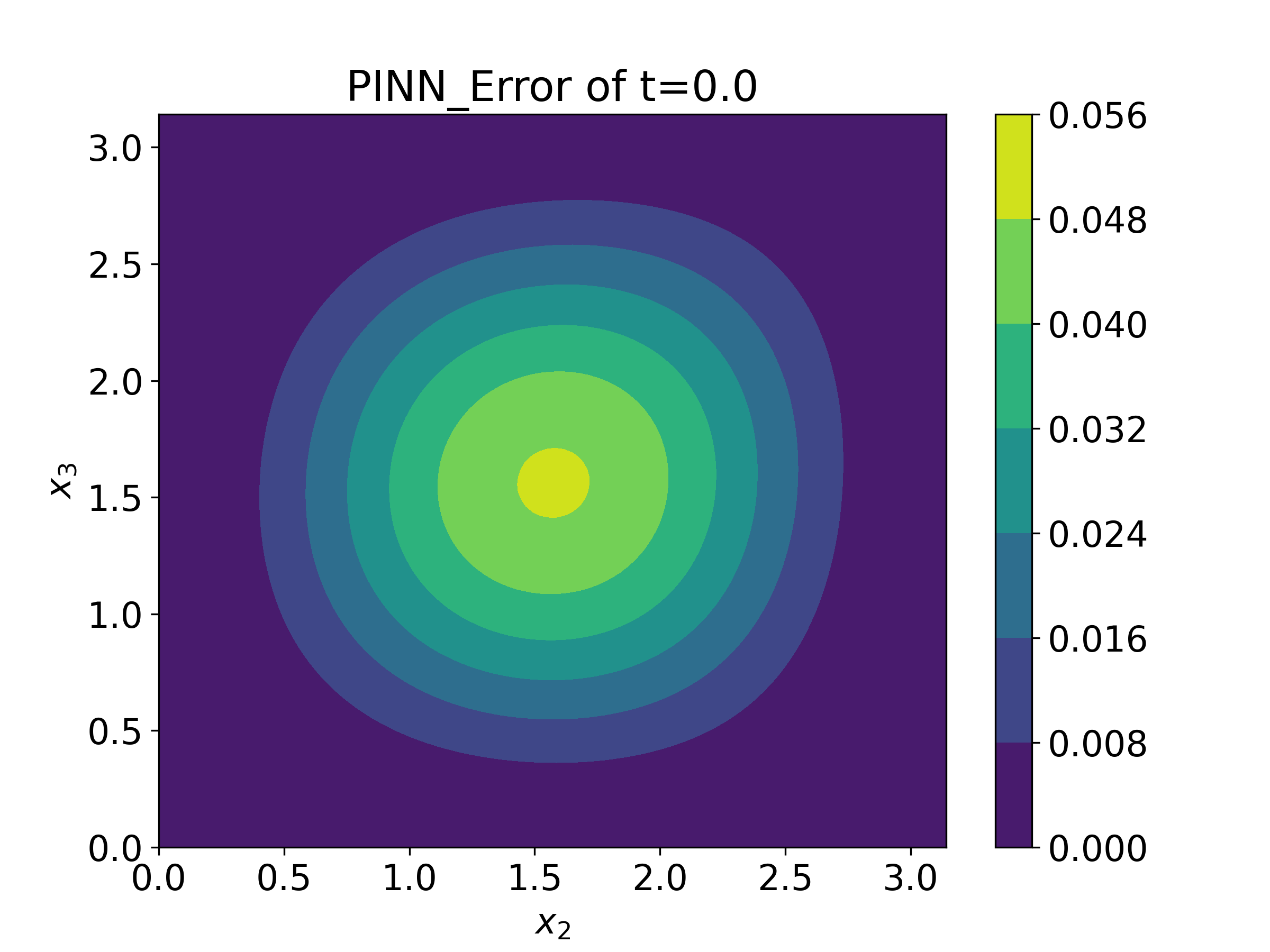}
		
		\label{chutian3}
	\end{minipage}
	\begin{minipage}{0.27\linewidth}
		\centering
		\includegraphics[width=\linewidth]{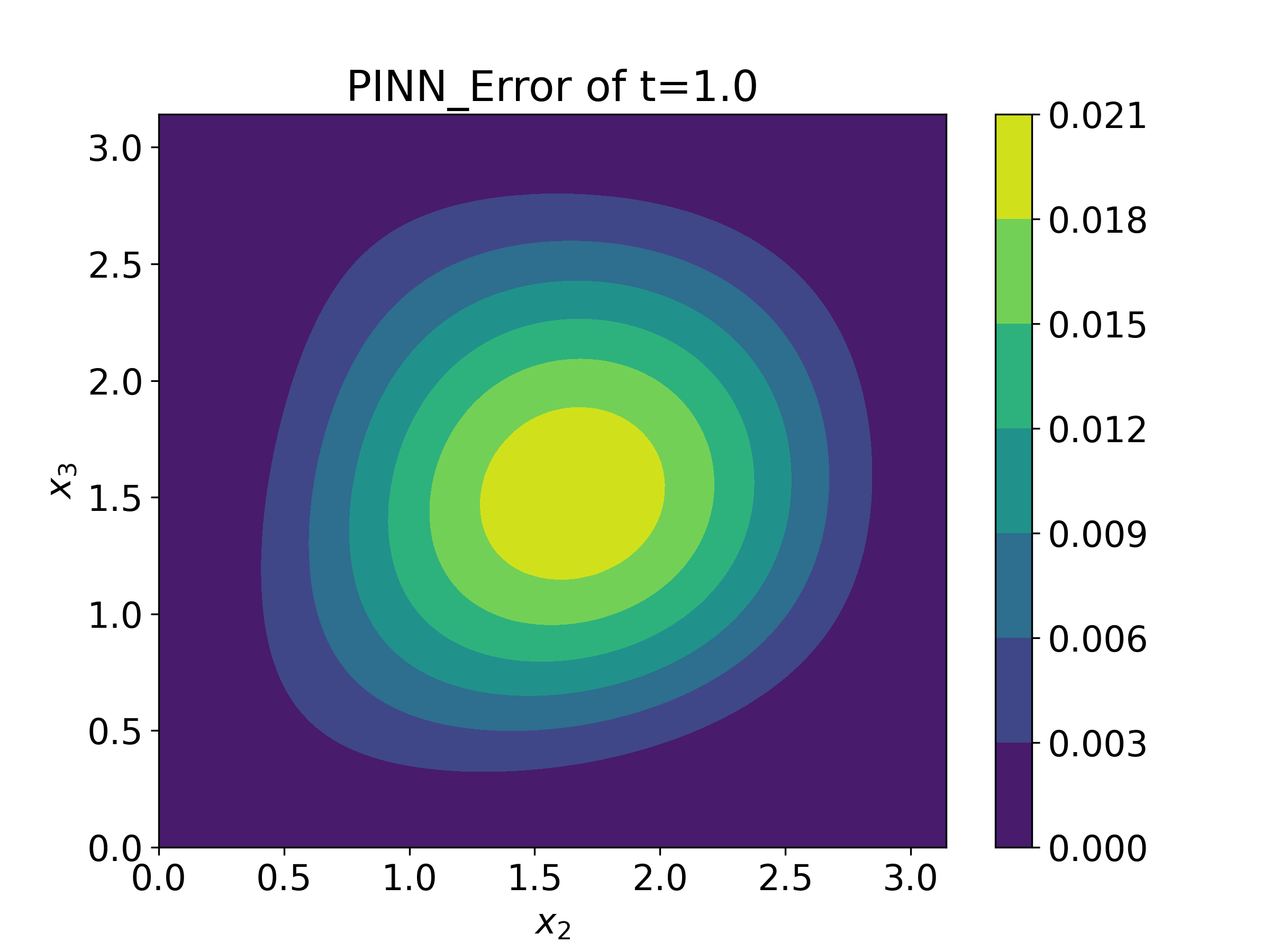}
		
		\label{chutian4}
	\end{minipage}
 \begin{minipage}{0.27\linewidth}
		\centering
		\includegraphics[width=\linewidth]{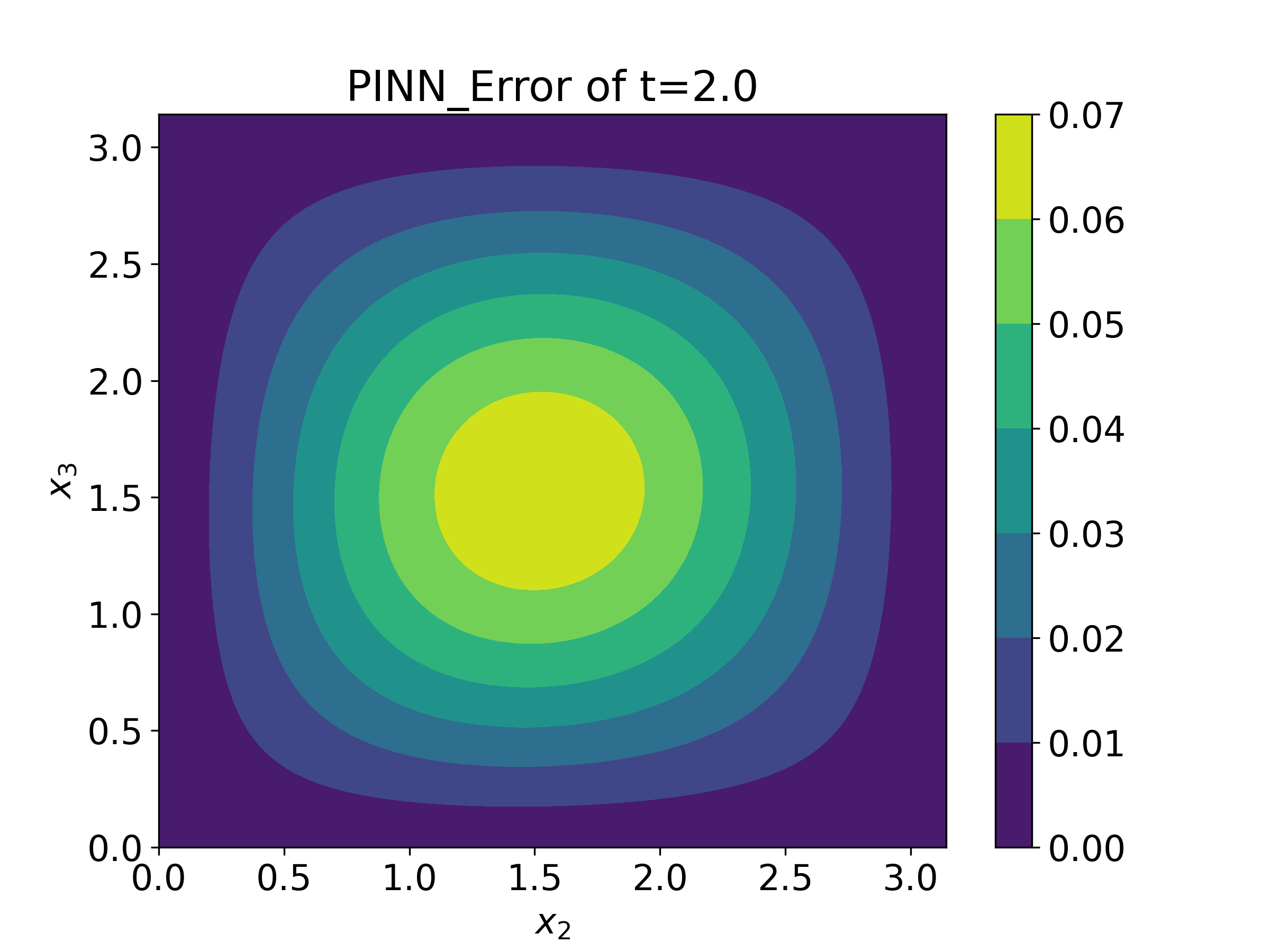}
		
		\label{chutian2}
	\end{minipage}

 \begin{minipage}{0.27\linewidth}
		\centering
		\includegraphics[width=\linewidth]{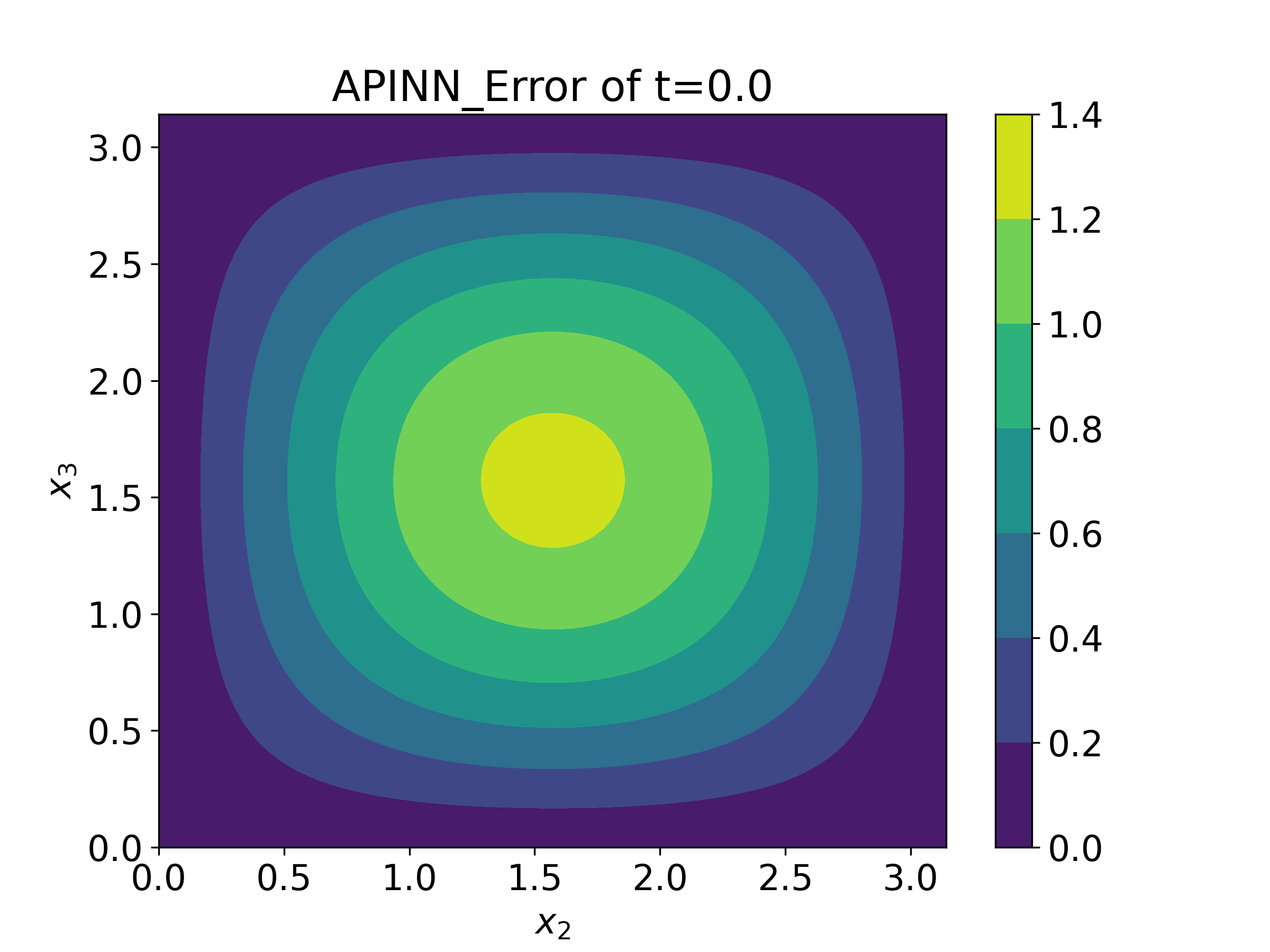}
		
		\label{chutian3}
	\end{minipage}
	\begin{minipage}{0.27\linewidth}
		\centering
		\includegraphics[width=\linewidth]{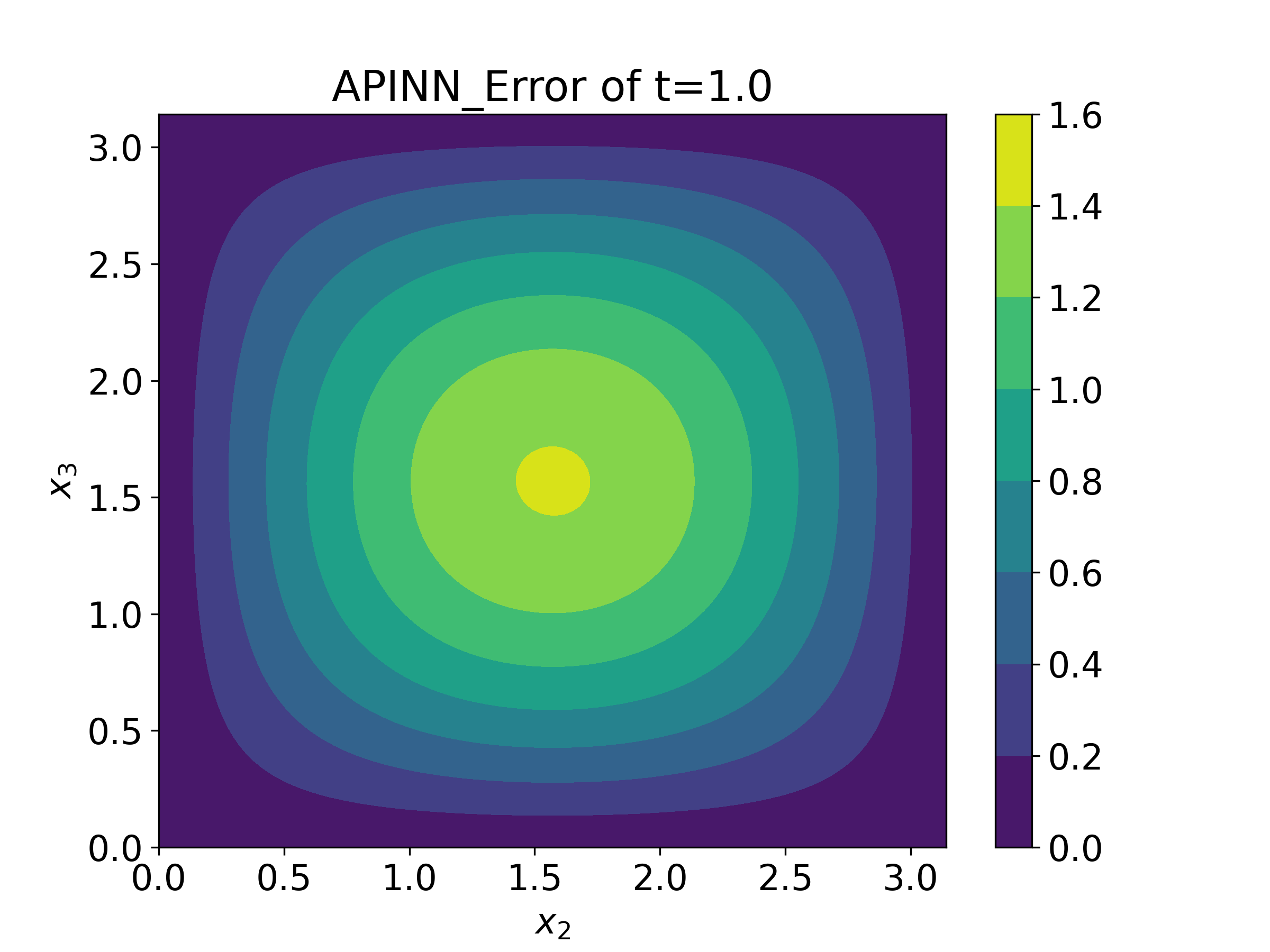}
		
		\label{chutian4}
	\end{minipage}
 \begin{minipage}{0.27\linewidth}
		\centering
		\includegraphics[width=\linewidth]{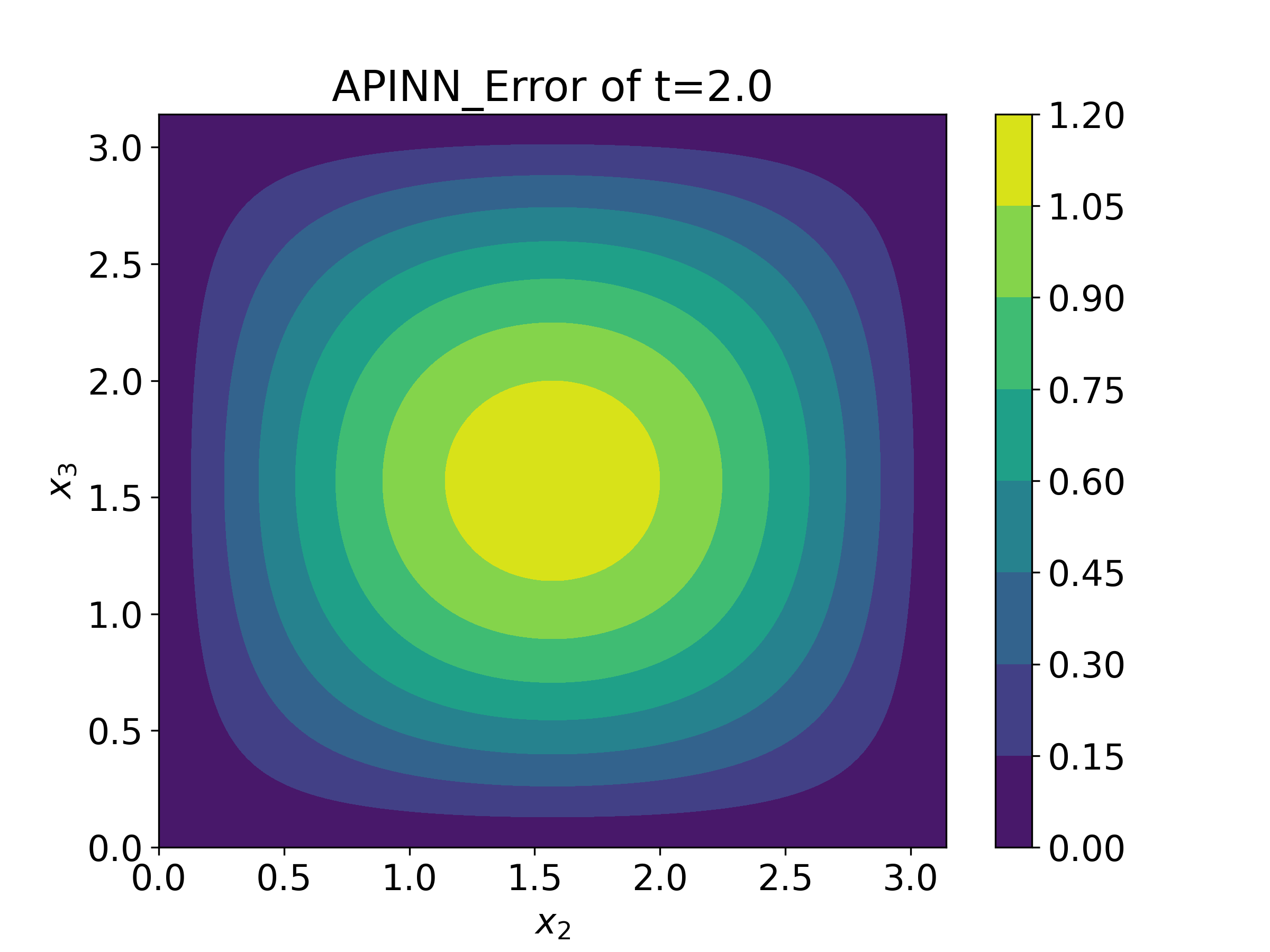}
		
		\label{chutian2}
	\end{minipage}

 \begin{minipage}{0.27\linewidth}
		\centering
		\includegraphics[width=\linewidth]{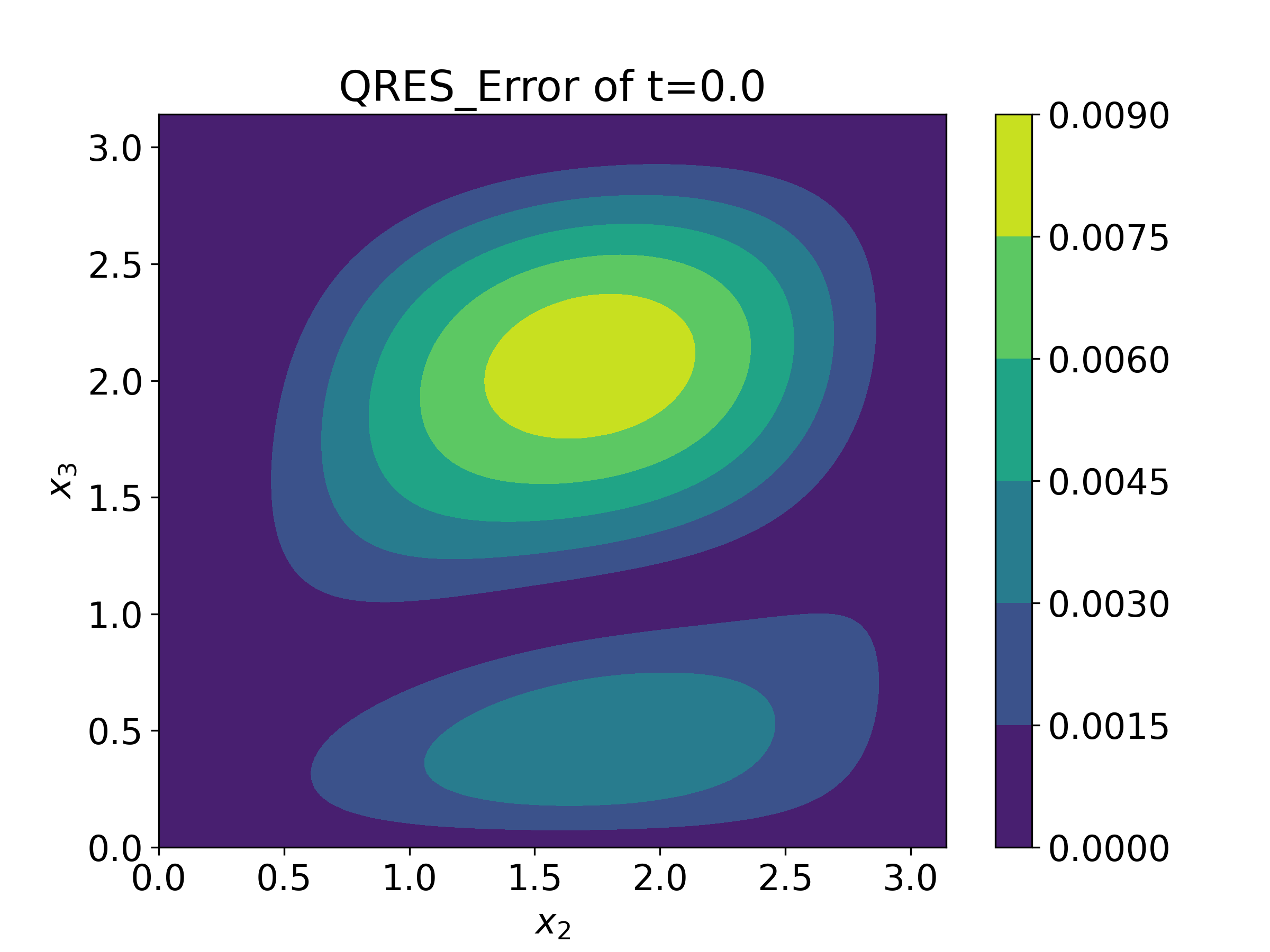}
		
		\label{chutian3}
	\end{minipage}
	\begin{minipage}{0.27\linewidth}
		\centering
		\includegraphics[width=\linewidth]{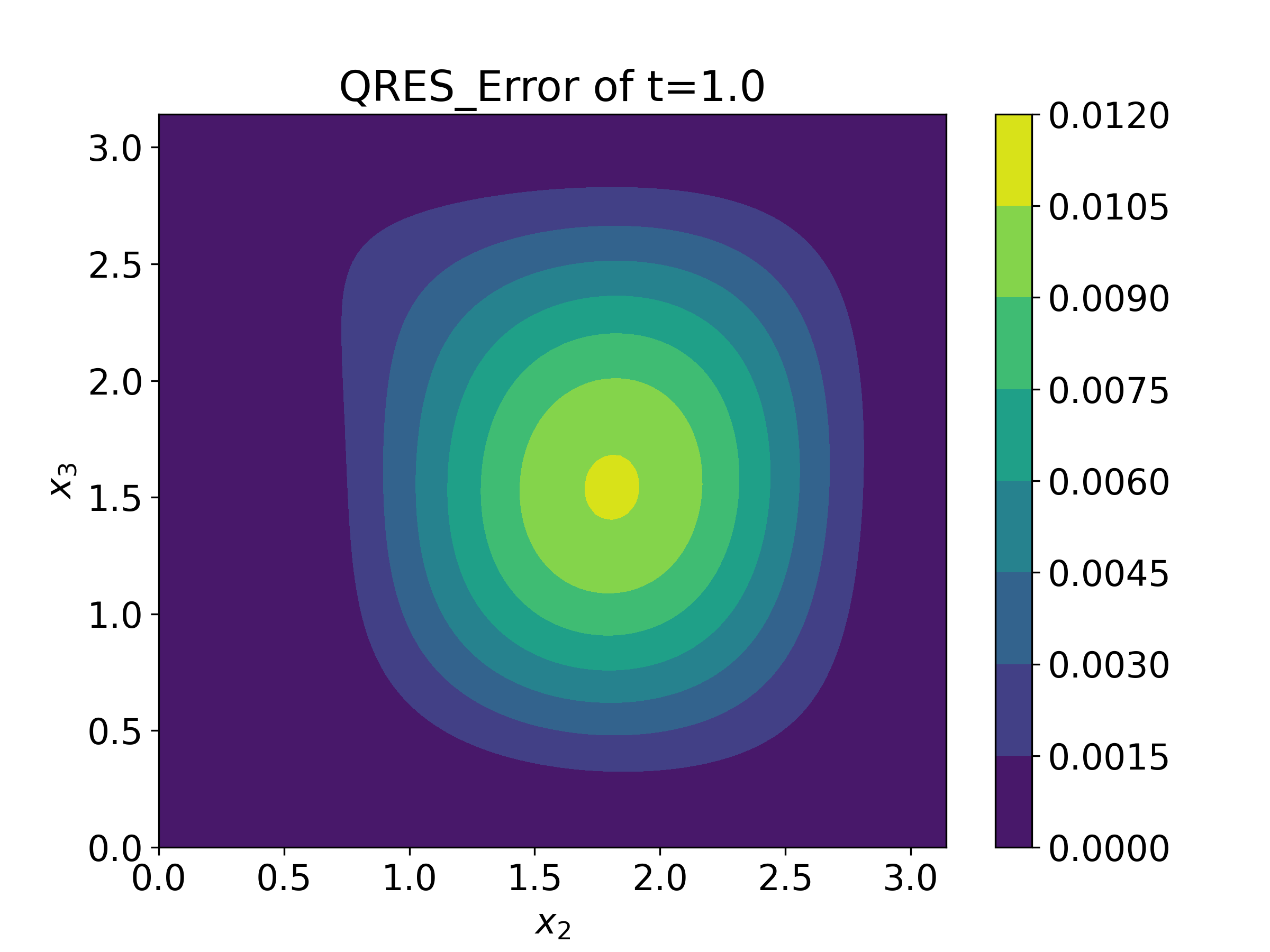}
		
		\label{chutian4}
	\end{minipage}
 \begin{minipage}{0.27\linewidth}
		\centering
		\includegraphics[width=\linewidth]{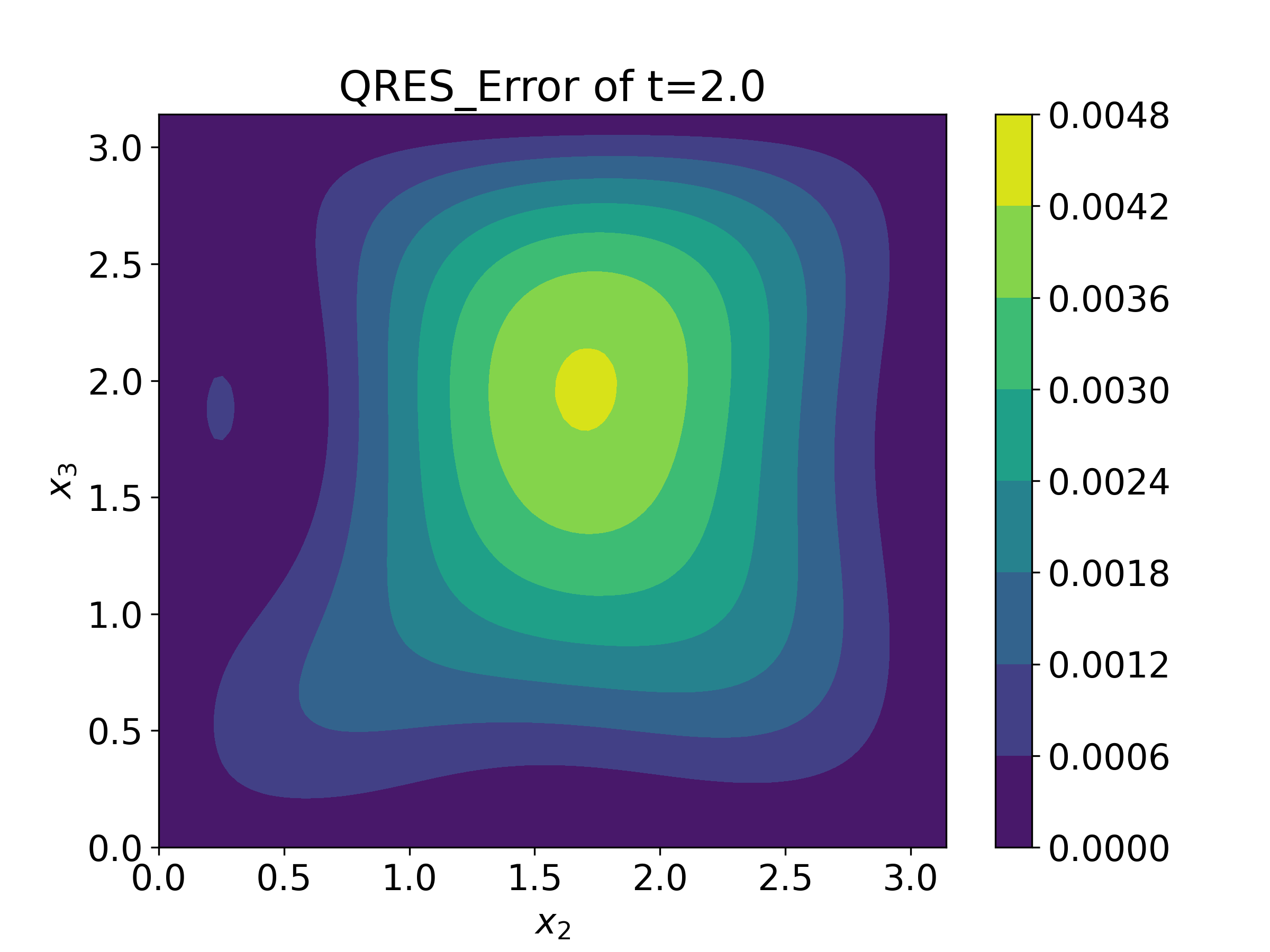}
		
		\label{chutian2}
	\end{minipage}

 \begin{minipage}{0.27\linewidth}
		\centering
		\includegraphics[width=\linewidth]{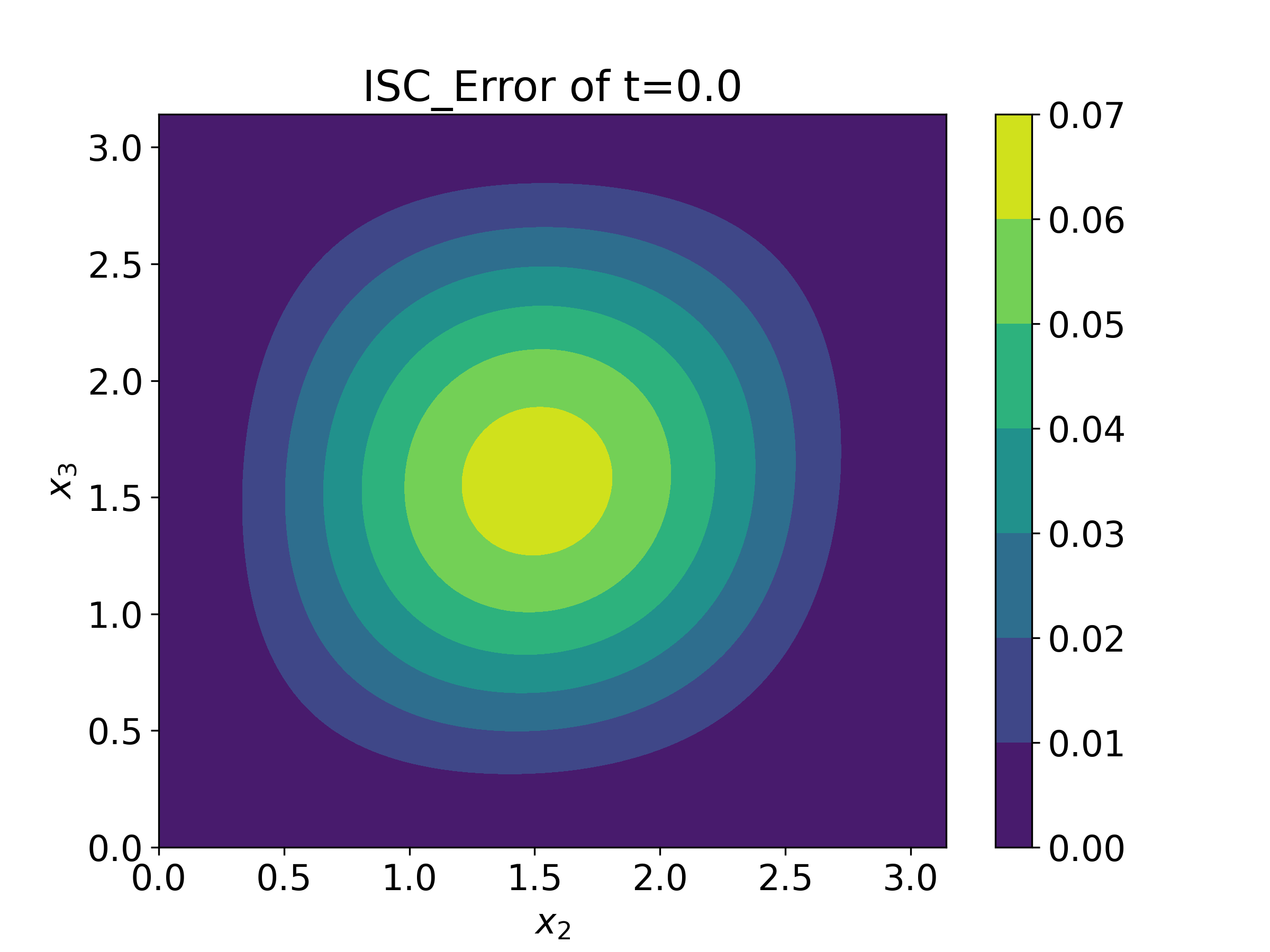}
		
		\label{chutian3}
	\end{minipage}
	\begin{minipage}{0.27\linewidth}
		\centering
		\includegraphics[width=\linewidth]{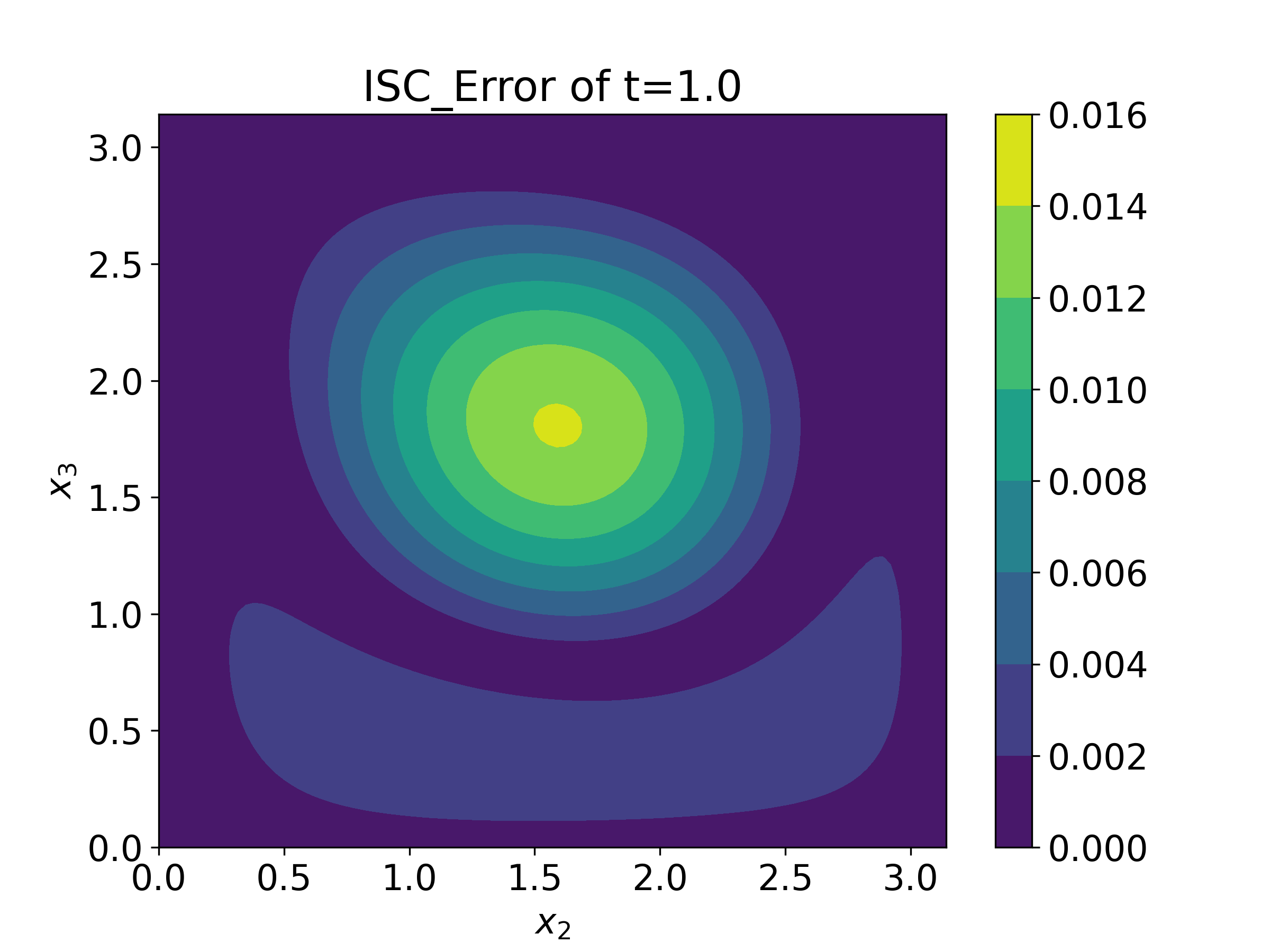}
		
		\label{chutian4}
	\end{minipage}
 \begin{minipage}{0.27\linewidth}
		\centering
		\includegraphics[width=\linewidth]{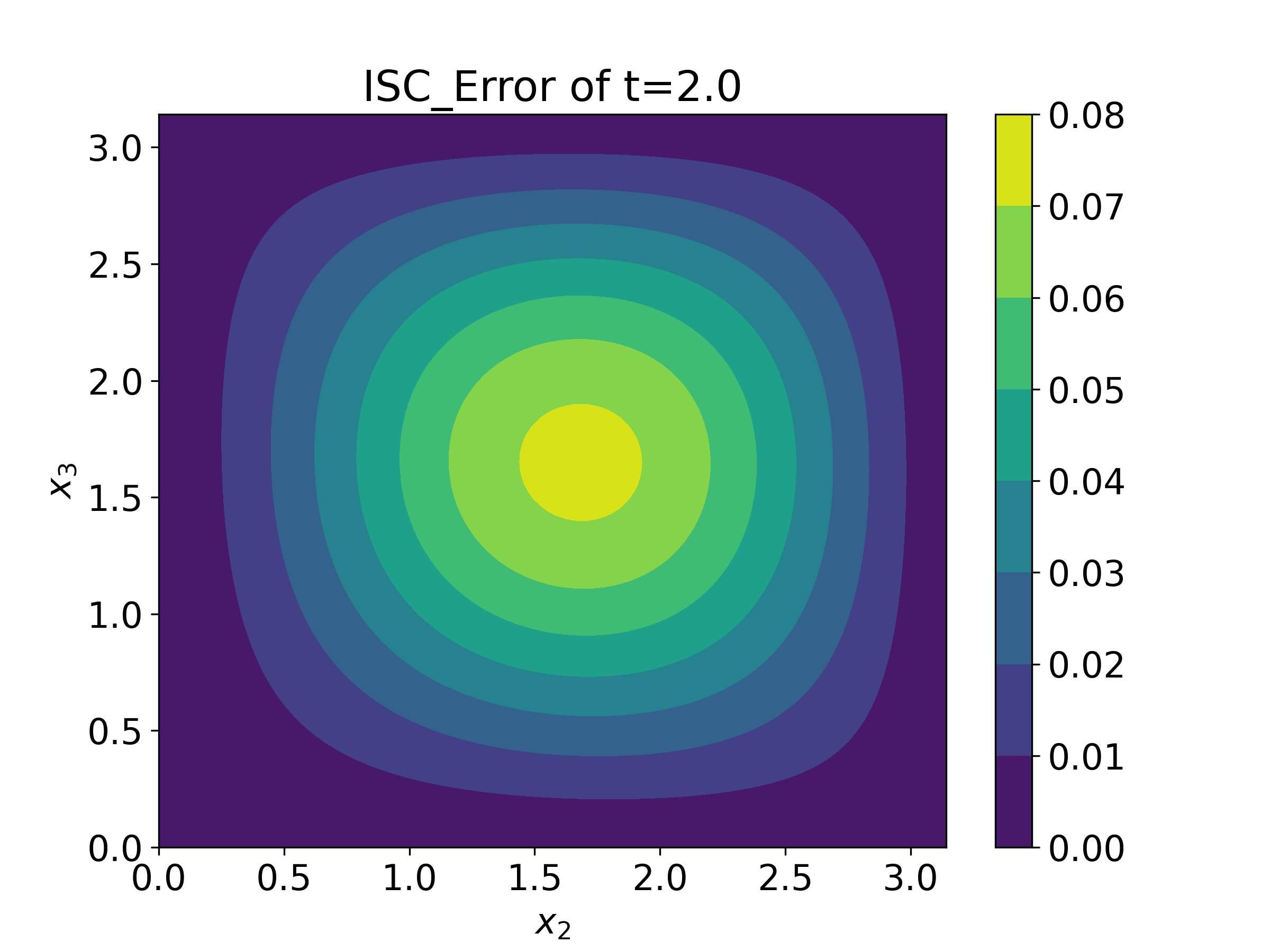}
		
		\label{chutian2}
	\end{minipage}

 \begin{minipage}{0.27\linewidth}
		\centering
		\includegraphics[width=\linewidth]{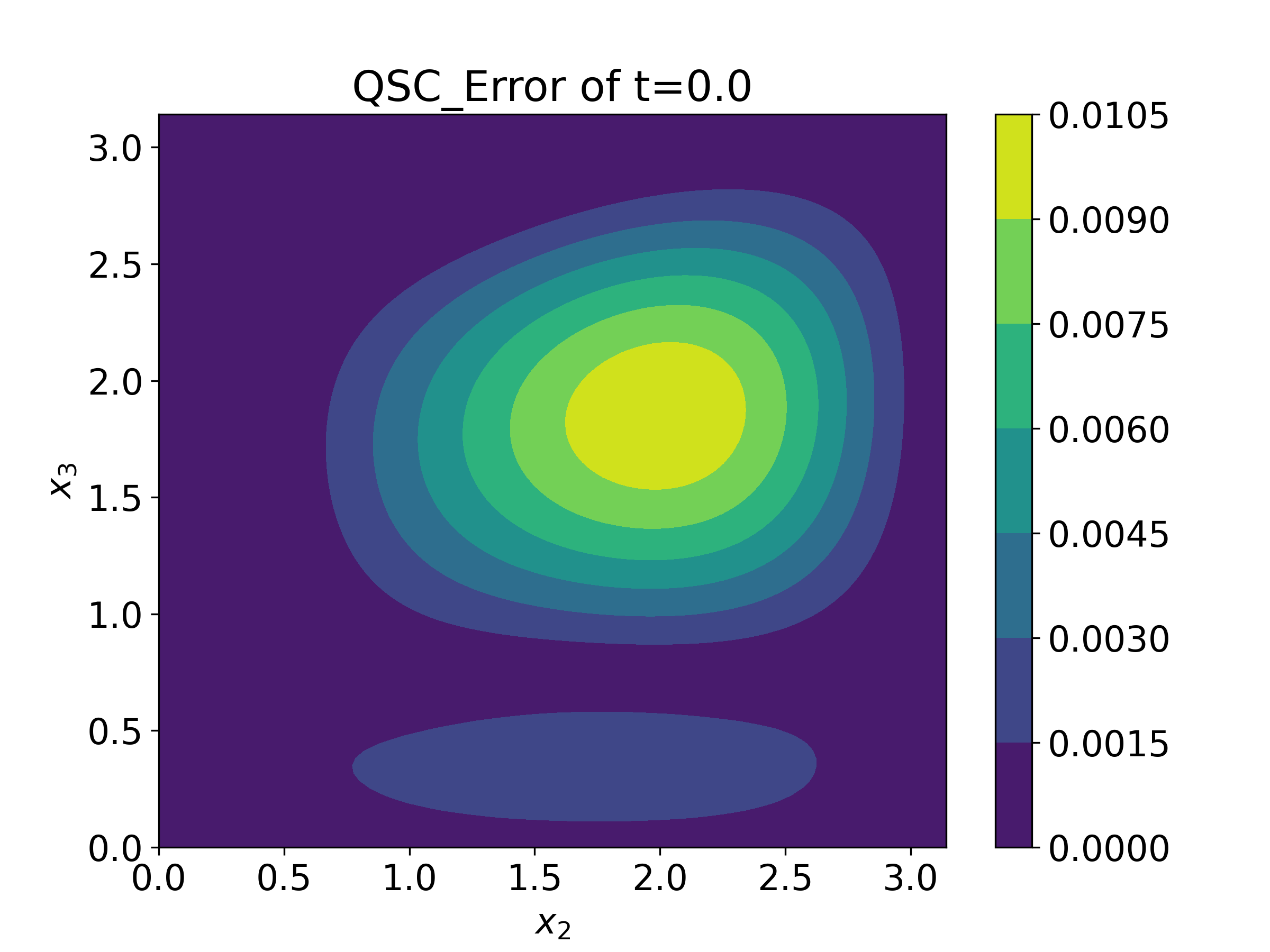}
		
		\label{chutian3}
	\end{minipage}
	\begin{minipage}{0.27\linewidth}
		\centering
		\includegraphics[width=\linewidth]{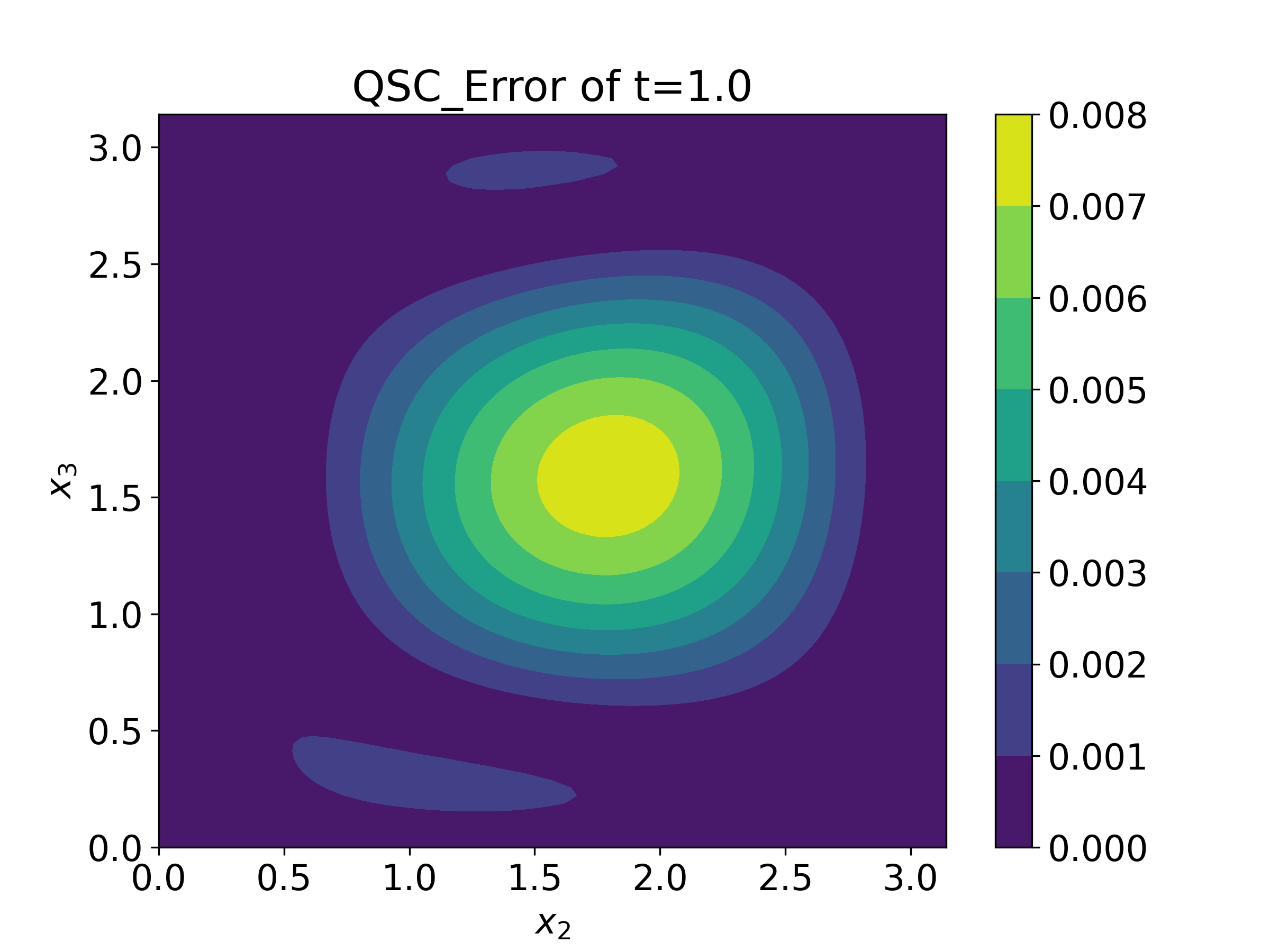}
		
		\label{chutian4}
	\end{minipage}
 \begin{minipage}{0.27\linewidth}
		\centering
		\includegraphics[width=\linewidth]{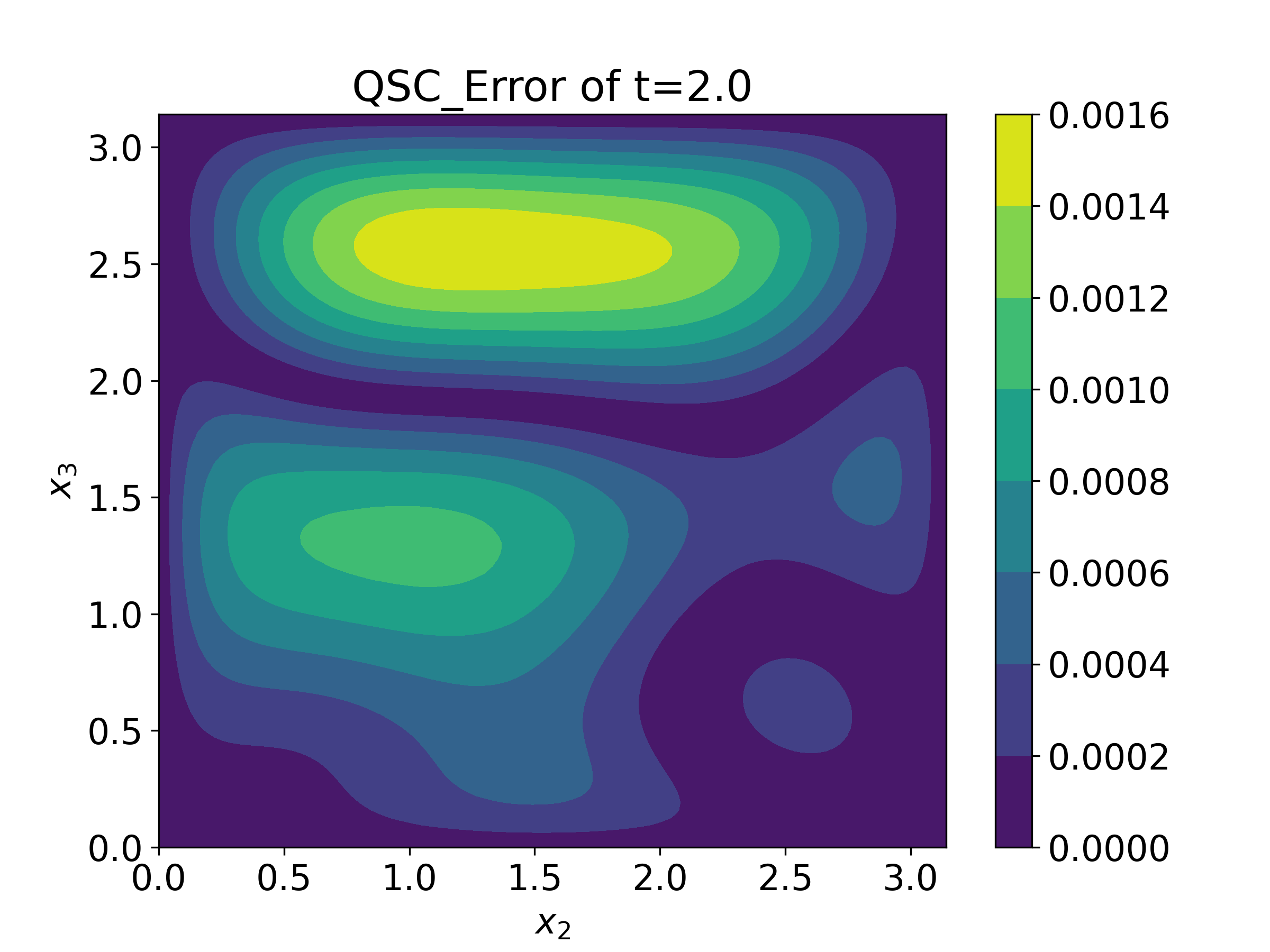}
		
		\label{chutian2}
	\end{minipage}

 \begin{minipage}{0.27\linewidth}
		\centering
		\includegraphics[width=\linewidth]{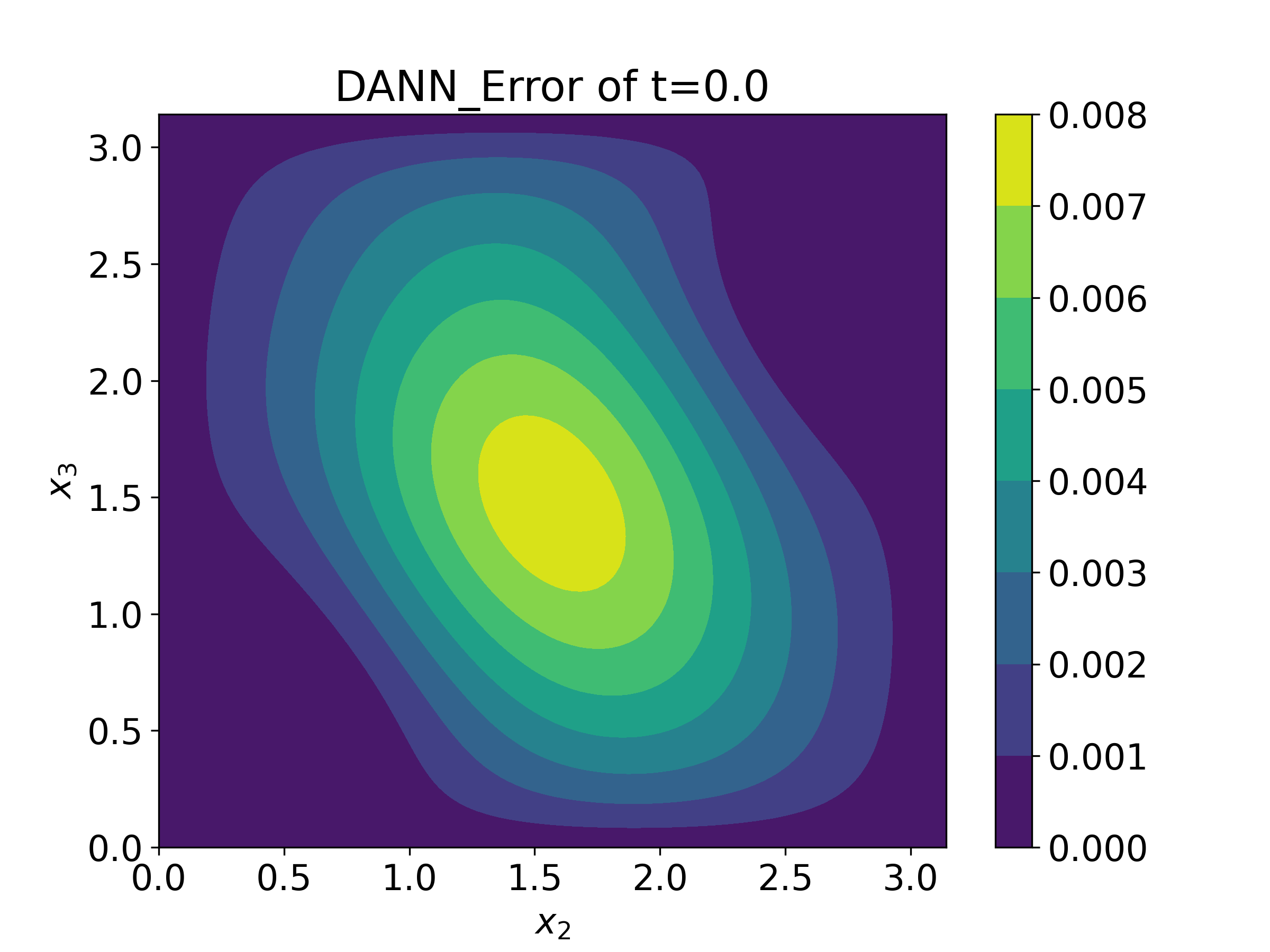}
		
		\label{chutian3}
	\end{minipage}
	\begin{minipage}{0.27\linewidth}
		\centering
		\includegraphics[width=\linewidth]{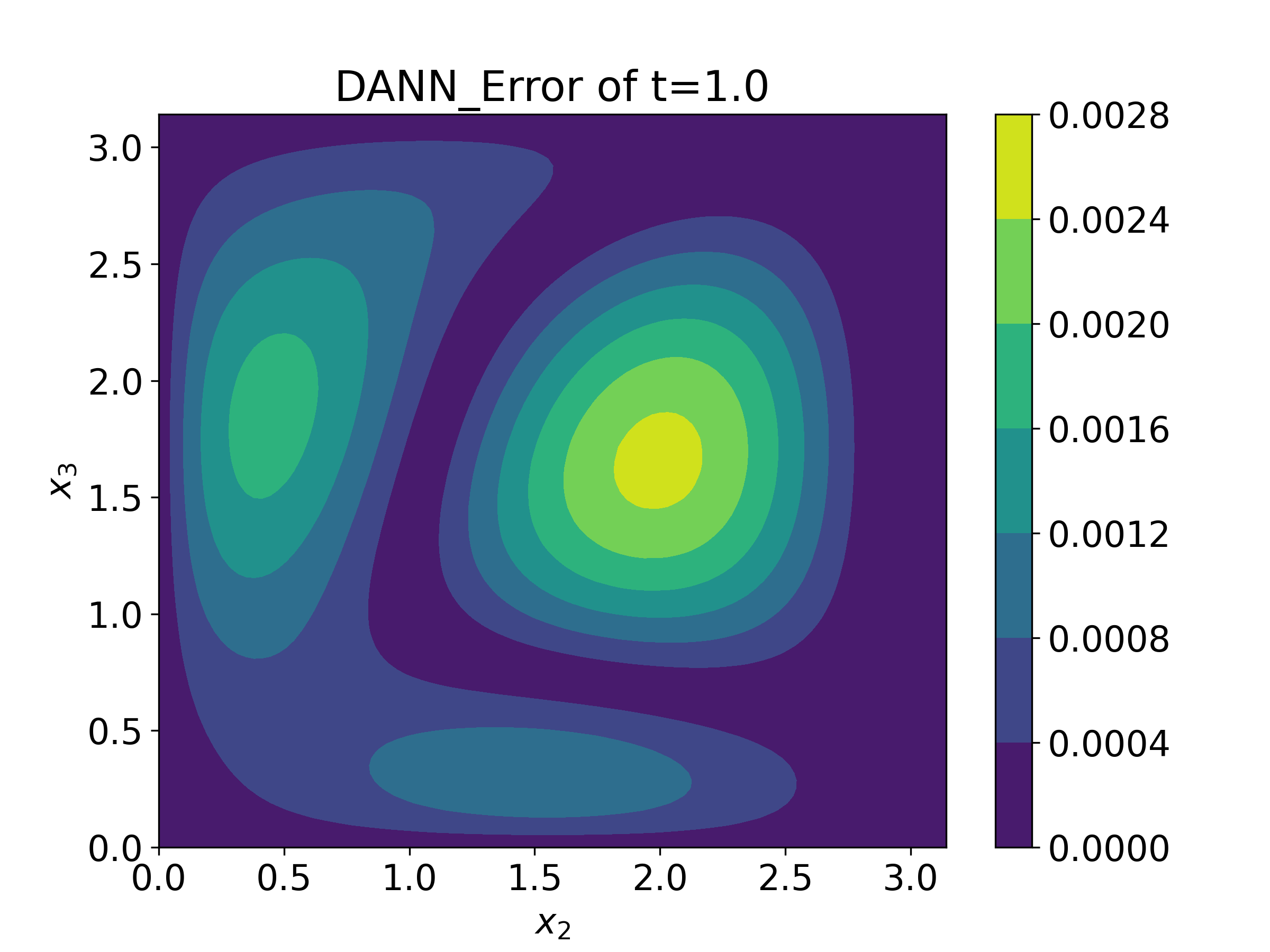}
		
		\label{chutian4}
	\end{minipage}
 \begin{minipage}{0.27\linewidth}
		\centering
		\includegraphics[width=\linewidth]{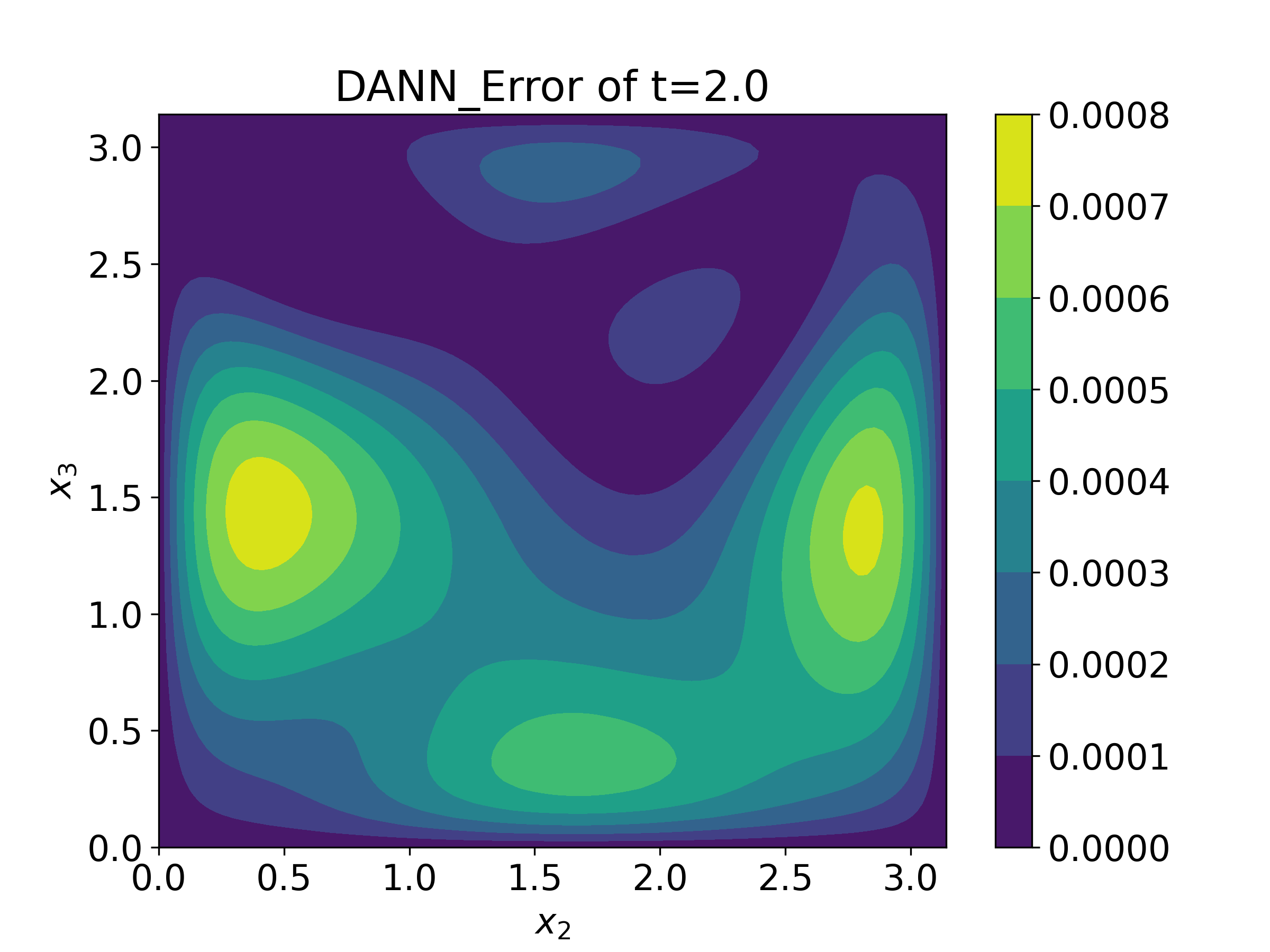}
		
		\label{chutian2}
	\end{minipage}
\caption{The absolute errors obtained by different models based on piecewise fitting at $t=0.0, 1.0, 2.0$ when $d=8$.}
\label{delay6wucha}
\end{figure}
\noindent fferent models based on piecewise fitting when $d=3$ and $d=8$, which shows that DANN is faster than PINN, APINN and ISC. Although the convergence speed of QSC and QRES is similar to that of DANN, the approximate solution obtained by DANN has higher accuracy.

\begin{figure}[H]
	\centering
 \begin{minipage}{0.4\linewidth}
		\centering
		\includegraphics[width=\linewidth]{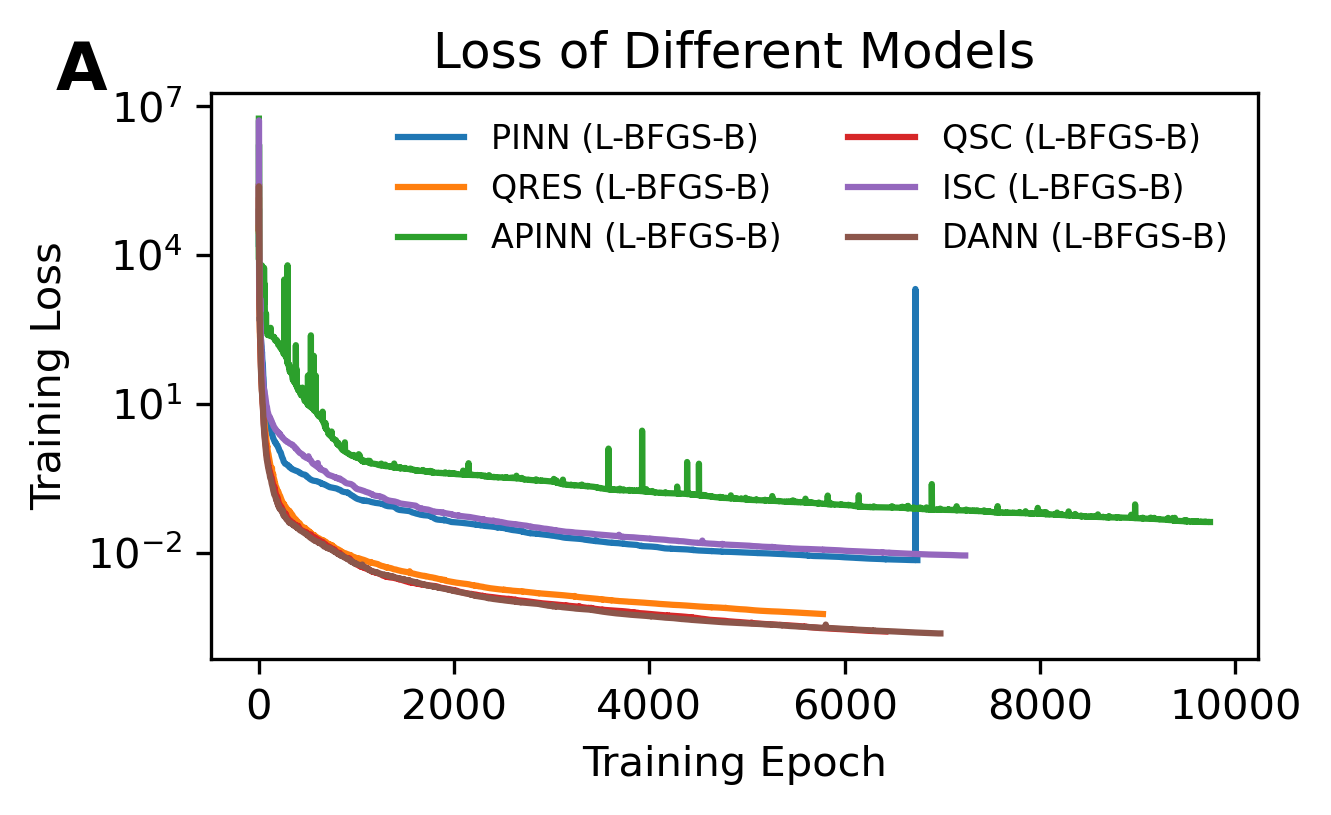}
		
		\label{chutian3}
	\end{minipage}
	\begin{minipage}{0.4\linewidth}
		\centering
		\includegraphics[width=\linewidth]{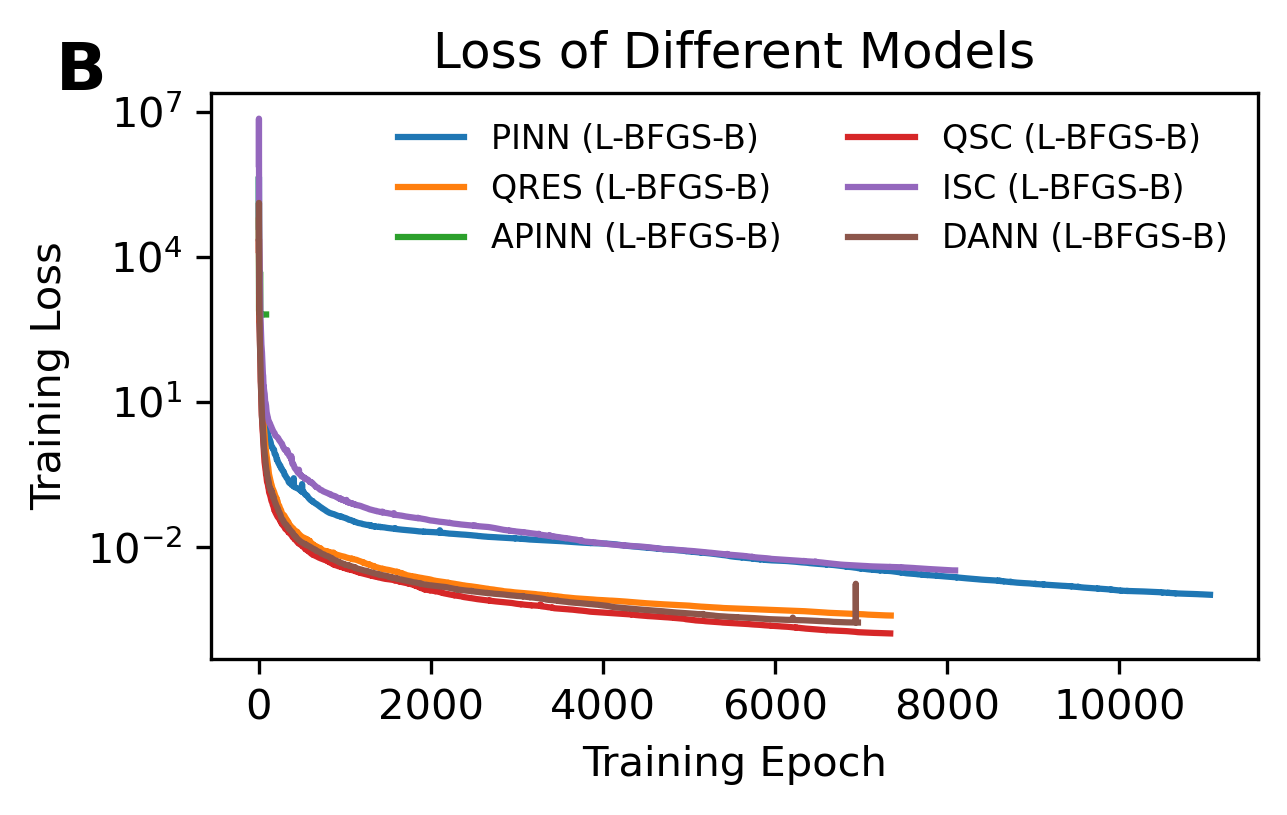}
		
		\label{chutian3}
	\end{minipage}

	\begin{minipage}{0.4\linewidth}
		\centering
		\includegraphics[width=\linewidth]{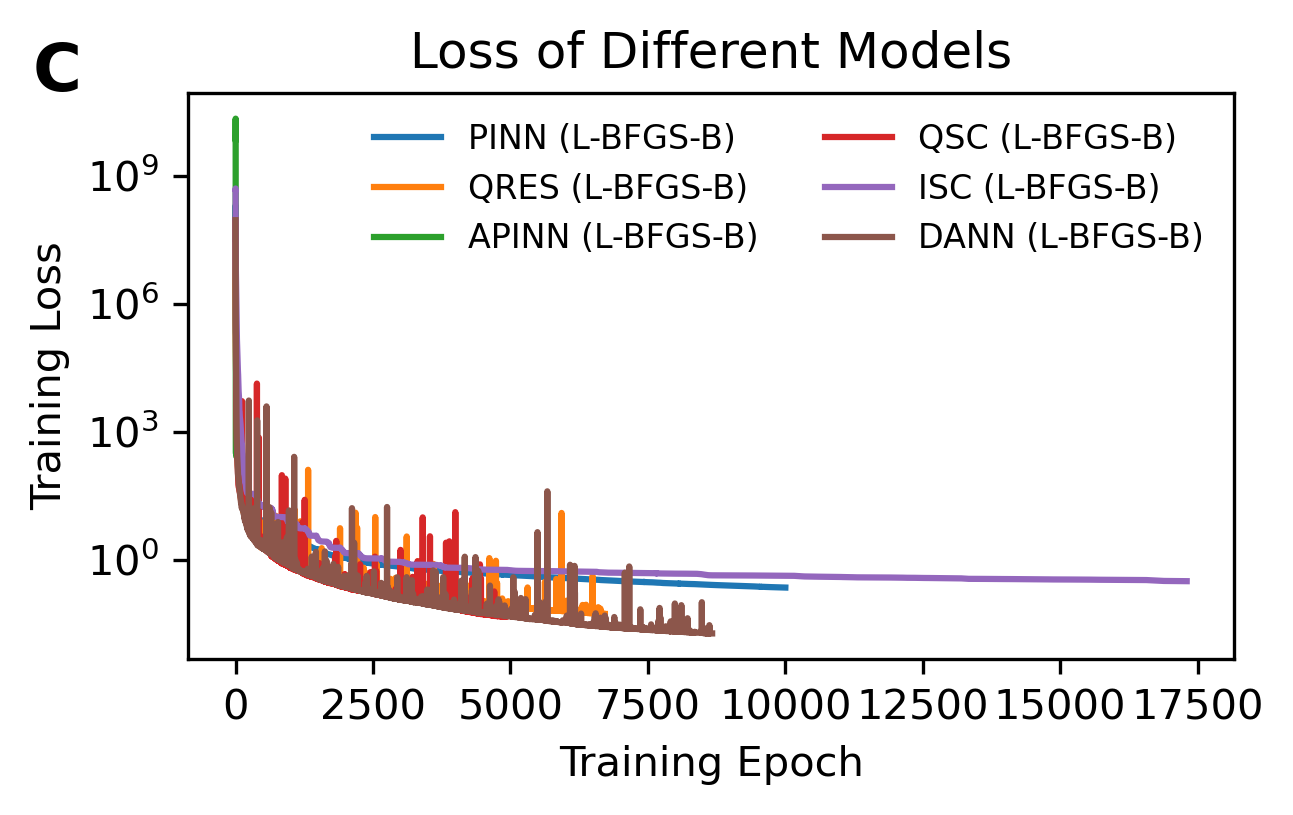}
		
		\label{chutian4}
	\end{minipage}
 \begin{minipage}{0.4\linewidth}
		\centering
		\includegraphics[width=\linewidth]{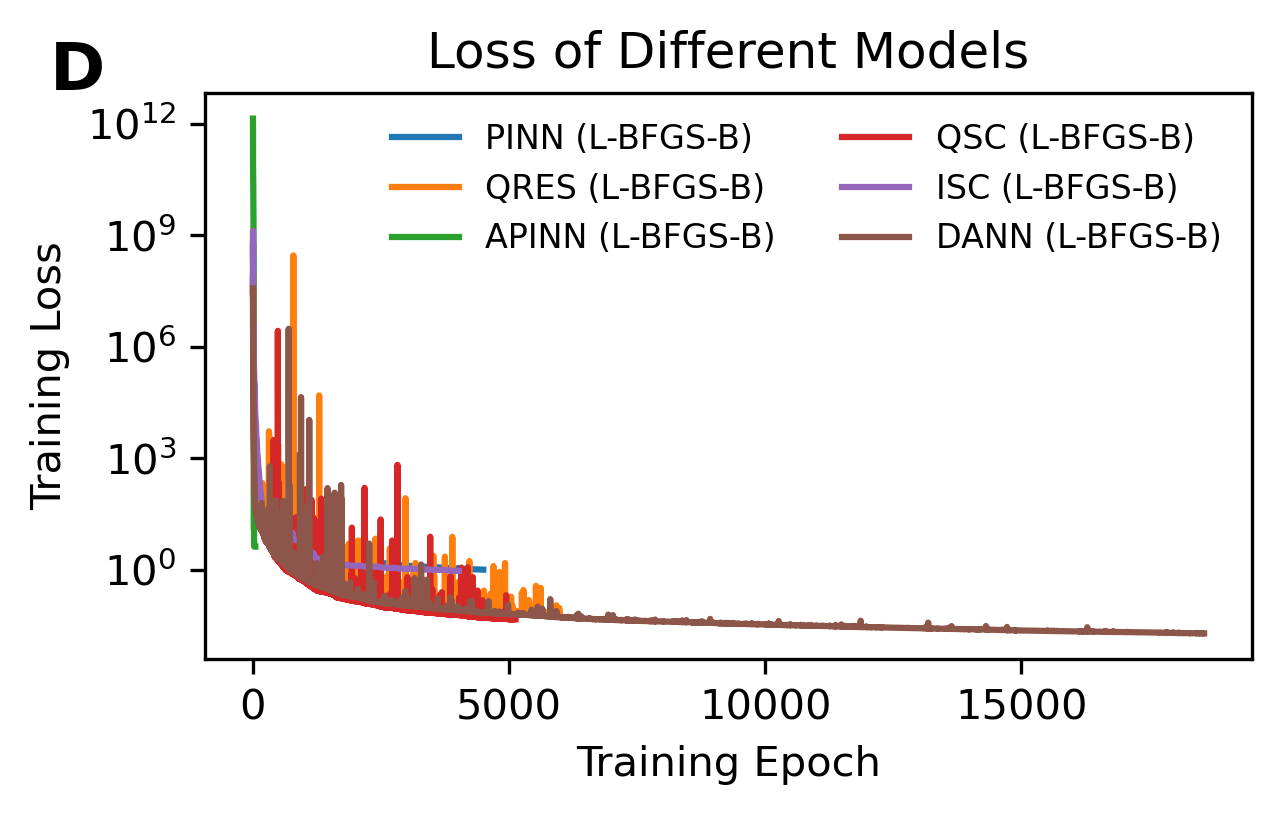}
		
		\label{chutian2}
	\end{minipage}
	\caption{Training loss curves for solving Example 4 based on piecewise fitting when $d=3$ (Subdomain 1) (A) and (Subdomain 2) (B) and $d=8$ (Subdomain 1) (C) and (Subdomain 2) (D).}
	\vspace{-0.2cm}
\label{delay5loss}
\end{figure}

\section{Conclusion}
In this paper, we proposed double-activation neural network (DANN), a new network architecture with two activation functions and an additional parameter  to augment the network's fitting performance for solving parabolic equations with time delay. We demonstrated the effectiveness of DANN through four numerical examples, including DDE with state-dependent and parabolic DPDEs with nonvanishing delay. Moreover, a piecewise fitting approach is proposed to address the issue of low fitting accuracy caused by the discontinuity of solution’s derivative. The numerical results demonstrated that DANN significantly outperforms PINN in terms of fitting accuracy and convergence speed, particularly when employing piecewise fitting. For comparison, we utilized APINN, QRES, ISC, and QSC models to solve these problems, and the results indicated that DANN achieves highest fitting accuracy.
Additionally, the convergence analysis of the proposed method was given, it was proved theoretically that the loss function obtained by PINN converges to 0 for the problem discussed in this paper. In this study, we did not assign weights to each item in the loss function, and it is an interesting research topic in the future to automatically determine an optimal weight. Moreover, it is possible to combine DANN with residual-based adaptive refinement (RAR) \cite{lu2021deepxde} to further improve the performance.

\bibliographystyle{abbrv}
\bibliography{xiugaidelay.bib}
\end{document}